\newtheorem{lem}{Lemma}[section]
\newtheorem{prop}[lem]{Proposition}
\newtheorem{thm}[lem]{Theorem}
\newtheorem{cor}[lem]{Corollary}
\newtheorem{rem}[lem]{Remark}
\setlist[description]{leftmargin=\parindent,labelindent=0cm}
\newcommand{\la}{\lambda}
\def\R{{\mathbb{R}}}
\def\N{{\mathbb{N}}}
\def\Z{{\mathbb{Z}}}
\def\a{{\alpha}}
\def\b{{\beta}}
\def\Gam{\mathrm{Gam}}
\def\tBeta{\mathrm{Be}'}
\def\z{\mathrm{zero}}
\def\AL{\mathrm{AL}}
\def\E{\mathrm{E}}
\newcommand{\vertiii}[1]{{\left\vert\kern-0.25ex\left\vert\kern-0.25ex\left\vert #1
    \right\vert\kern-0.25ex\right\vert\kern-0.25ex\right\vert}}
\numberwithin{equation}{section}
\begin{document}

\title[Stationary solutions of random polymer models]{On the stationary solutions\\of random polymer models\\and their zero-temperature limits}

\author[D.~A.~Croydon]{David A. Croydon}
\address{Research Institute for Mathematical Sciences, Kyoto University, Kyoto 606-8502, Japan}
\email{croydon@kurims.kyoto-u.ac.jp}

\author[M.~Sasada]{Makiko Sasada}
\address{Graduate School of Mathematical Sciences, University of Tokyo, 3-8-1, Komaba, Meguro-ku, Tokyo, 153--8914, Japan}
\email{sasada@ms.u-tokyo.ac.jp}

\begin{abstract}
We derive stationary measures for certain zero-temperature random polymer models, which we believe are new in the case of the zero-temperature limit of the beta random polymer (that has been called the river delta model). To do this, we apply techniques developed for understanding the stationary measures of the corresponding positive-temperature random polymer models (and some deterministic integrable systems). More specifically, the article starts with a survey of results for the four basic beta-gamma models (i.e.\ the inverse-gamma, gamma, inverse-beta and beta random polymers), highlighting how the maps underlying the systems in question can each be reduced to one of two basic bijections, and that through an `independence preservation' property, these bijections characterise the associated stationary measures. We then derive similar descriptions for the corresponding zero-temperature maps, whereby each is written in terms of one of two bijections. One issue with this picture is that, unlike in the positive-temperature case, the change of variables required is degenerate in general, and so whilst the argument yields stationary solutions, it does not provide a complete characterisation of them. On the other hand, this degeneracy does allow us to explain the appearance of atoms in the stationary measures of certain zero-temperature models. We also derive from our viewpoint various links between random polymer models, some of which recover known results, some of which are novel, and some of which lead to further questions.

\bigskip

\noindent
\emph{Data sharing not applicable to this article as no datasets were generated or analysed during the current study.}
\end{abstract}

\keywords{Detailed balance, random polymer, stationary measure, zero-temperature limit}
\subjclass[2010]{82D60 (primary), 37K60, 37L40, 60E05, 60K35, 82B23}


\date{\today}

\maketitle

\setcounter{tocdepth}{2}
\tableofcontents

\section{Introduction}

In the recent work \cite{CS2}, a general approach for identifying the spatially independent and identically distributed stationary measures of various classical discrete integrable systems, namely the discrete and ultra-discrete KdV equations, and the discrete and ultra-discrete Toda lattice, was presented. Moreover, parallels were drawn between the argument of \cite{CS2} and the techniques of \cite{CN}, which characterised the stationary measures of a class of random polymer models. In particular, in both \cite{CS2} and \cite{CN} appeals were made to particular decompositions of the recursive maps that define the models of interest. After presenting an extended introduction to what is known in the positive-temperature random polymer context, the aim of this article is to highlight the use of similar decompositions in understanding zero-temperature limits of the models in question. Whilst the results we present were for the most part previously known (including all those relevant to the positive-temperature systems), the stationary solutions we derive for the zero-temperature limit of the beta random polymer are apparently new. We further believe that the systematic approach set out here helps clarify why particular stationary measures arise for the different random polymer models considered, and the connections between them. Indeed, in both the positive- and zero-temperature settings, each of the systems is related to one of two basic bijections (which are themselves related), and each of these is naturally connected to certain distributions via an `independence preservation' property. Moreover, the appearance in atoms in the stationary measures of various zero-temperature polymer models is explained in terms of a collapse of part of the domain from the positive- to zero-temperature regime.

Let us start by introducing the central ideas of \cite{CS2} in the context of two-dimensional stochastic lattice models. In the latter setting we commonly have that the input to a system is provided by an array $(X_{n,m})_{n,m \in \Z^2}$ of random variables, and the behavior of the partition function $(Z_{n,m})_{n,m \in \Z^2}$, as determined by $(X_{n,m})_{n,m \in \Z^2}$, is the main object of interest; see Section \ref{sec2} for the concrete random polymer examples that will be our primary focus. Writing $(U_{n,m})_{n,m \in \Z^2}$ and $(V_{n,m})_{n,m \in \Z^2}$ for the `derivative' (in a suitable sense) of $(Z_{n,m})_{n,m \in \Z^2}$ in the first- and second-coordinate directions, respectively, one then typically has a recursive relation of the form:
\begin{equation}\label{R-eq}
R\left(X_{n,m}, U_{n,m-1},V_{n-1,m}\right)=\left(U_{n,m},V_{n,m}\right),
\end{equation}
where $R: J_1 \times I_1 \times I_2 \to I_1 \times I_2$ for some subsets $J_1, I_1,I_2\subseteq \mathbb{R}$ (usually intervals or discrete subsets thereof). In the `upper-right corner'-version of such a model, one takes independent random variables $(X_{n,m})_{n,m \in \N}$, $(U_{n,0})_{ n \in \N}$ and $(V_{0,n})_{n \in \N}$, and from these inputs determines the remaining variables $(U_{n,m})_{n, m \in \N}$ and $(V_{n,m})_{n, m \in \N}$ by applying \eqref{R-eq} repeatedly. Given this structure, if there exists a triplet of probability measures $(\tilde{\mu},\mu, \nu)$ on $J_1$, $I_1$ and $I_2$ such that
\begin{equation}\label{Rinv}
R\left( \tilde{\mu} \times \mu \times \nu\right)=\mu \times \nu,
\end{equation}
where an expression of the form $\mu \times \nu$ means a product of measures, then the stochastic lattice model has a stationary measure;
see the discussion in \cite{CS2}, for example. (NB.\ For a measurable map $f$ and measure $\pi$, we write $f(\pi):=\pi\circ f^{-1}$ for the push-forward of $\pi$ by $f$.) Hence it is important to find such a collection of measures.

It transpires that for the random polymer models studied in \cite{CN} the map $R$ can be usefully decomposed into two maps, each with two inputs and two outputs. More precisely, one can find a bijection $F:I_1\times I_2\rightarrow J_1\times J_2$, where $J_2\subseteq\mathbb{R}$, such that $R$ can be reconstructed from $F$ and $F^{-1}$. To this end, one first defines an involution $\overline{F}: J_1 \times I_1 \times I_2 \to J_1 \times I_1 \times I_2$ by setting
\begin{equation}\label{Fbar}
\overline{F}(a,b,c):=\left( F^{(1)}(b,c), F^{-1} (a, F^{(2)}(b,c) \right),
\end{equation}
where we write $F(b,c)=(F^{(1)}(b,c),F^{(2)}(b,c))$. That is, $(d,e,f):=\overline{F}(a,b,c)$ can be computed by first applying $F$ to $(b,c)$ to give $(d,g)=F(b,c)$, and then applying $F^{-1}$ to $(a,g)$ to give $(e,f)=F^{-1}(a,g)$. If $F$ is chosen appropriately, one then finds that $R=\overline{F}^{(2,3)}$, i.e.\
\begin{equation}\label{FR}
R(a,b,c)=\left(\overline{F}^{(2)}(a,b,c),\overline{F}^{(3)}(a,b,c)\right).
\end{equation}
Pictorially, keeping track of the lattice variables, this situation can be represented as follows:
\begin{equation}\label{picture}
\hspace{40pt}\xymatrix@C-15pt@R-15pt{\boxed{F}\vphantom{F^{-1}} & d &\boxed{F^{-1}}& e=U_{n,m}&\\
c=V_{n-1,m} \ar[rr] && g\ar[rr]&& f=V_{n,m},\\
& b=U_{n,m-1} \ar[uu]&&a=X_{n,m}\ar[uu]&}
\end{equation}
We will recall below explicitly the decomposition of $R$ for the random polymer models of \cite{CN}. Moreover, we note that in \cite[Section 6]{CS2}, it was observed that if $R$ is the map arising for a certain directed polymer with site weights, then the map $\overline{F}$ is that associated with the discrete Toda lattice, and, similarly, the $R$ of directed last passage percolation corresponds to the $\overline{F}$ associated with the ultra-discrete Toda lattice. (Precisely, up to a simple change of variables, discrete Toda corresponds to the $\overline{F}$ given by $R_{(0,1)}$, and ultra-discrete Toda corresponds to the $\overline{F}$ given by $R^\z_{(0,1)}$, using the notation introduced subsequently at \eqref{rab} and \eqref{ztl}.) As will become clear in what follows, these connections explain why the systems admit related stationary measures.

The advantage of the above picture is that it gives a route to understanding the stationary measures of $R$, i.e.\ those satisfying \eqref{Rinv}, via the solutions of a certain `detailed balance condition' for $F$. In particular, we say that a quadruplet $(\mu, \nu,\tilde{\mu},\tilde{\nu})$ on $I_1$, $I_2$, $J_1$ and $J_2$ satisfies the \emph{detailed balance condition} for $F$ if
\begin{equation}\label{db}
F\left(\mu\times \nu\right)=\tilde{\mu}\times\tilde{\nu}.
\end{equation}
This condition was called the `detailed balance condition for a type II model' in \cite{CS2}, wherein it was shown to be equivalent to the temporal invariance in distribution of the corresponding bi-infinite dynamical system started from $(\mu\times\tilde{\mu})^\mathbb{Z}$. Whilst this usage of the terminology detailed balance is different to the standard one for reversible Markov chains, it similarly provides, in the setting of \cite{CS2}, a local condition for the global invariance of a system. Notice also that the detailed balance condition of \eqref{db} can be seen as the aforementioned independence preservation property for the bijection $F$. A key result in \cite{CS2} is the following proposition, which shows that solutions of the detailed balance condition for $F$ precisely correspond to triplets of measures $(\tilde{\mu},\mu, \nu)$ whose product is invariant under $\overline{F}$. Subsequently, applying the link between $R$ and $\overline{F}$ from \eqref{FR}, one sees that invariant product measures for $\overline{F}$ yield solutions of \eqref{Rinv}, and vice versa. Thus we have the desired connection between the detailed balance condition for $F$ and stationary measures for $R$.

\begin{prop}[{\cite[Proposition 2.8]{CS2}}]\label{twothree}
Let $F:I_1\times I_2\rightarrow J_1\times J_2$ be a bijection, where $I_1,I_2,J_1,J_2$ are measurable subsets of $\mathbb{R}$, and define $\overline{F}$ as at \eqref{Fbar}. For a triplet of probability measures $(\tilde{\mu}, \mu, \nu)$ on $J_1$, $I_1$ and $I_2$, the following three conditions are equivalent:
\begin{enumerate}
\item[(a)] $\overline{F}^{(2,3)}( \tilde{\mu} \times \mu \times \nu)=\mu \times \nu$;
\item[(b)] $\overline{F}( \tilde{\mu} \times \mu \times \nu)=\tilde{\mu} \times \mu \times \nu$;
\item[(c)] there exists a probability measure $\tilde{\nu}$ on $J_2$ such that the quadruplet of probability measures $(\mu,\nu,\tilde{\mu},\tilde{\nu})$ satisfies the detailed balance condition for $F$.
\end{enumerate}
\end{prop}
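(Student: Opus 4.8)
The plan is to prove that the three conditions are equivalent by establishing the cycle of implications $(a)\Rightarrow(c)\Rightarrow(b)\Rightarrow(a)$, working throughout with a concrete triple of random variables realising the relevant product measure and tracking how independence is propagated by $F$ and $F^{-1}$. Concretely, I would fix independent random variables $A\sim\tilde\mu$, $B\sim\mu$, $C\sim\nu$ (so $(A,B,C)\sim\tilde\mu\times\mu\times\nu$), and set $(D,G):=F(B,C)$ and $(E,K):=F^{-1}(A,G)$, so that by the definition \eqref{Fbar} we have $\overline{F}(A,B,C)=(D,E,K)$ and $\overline{F}^{(2,3)}(A,B,C)=(E,K)$. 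The implication $(b)\Rightarrow(a)$ is then immediate: if $(D,E,K)\sim\tilde\mu\times\mu\times\nu$, then reading off the marginal law of the last two coordinates $(E,K)$ gives $\mu\times\nu$, which is exactly $(a)$.

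For $(c)\Rightarrow(b)$, I would assume $F(\mu\times\nu)=\tilde\mu\times\tilde\nu$ and follow the independence through the two bijections. Since $(D,G)=F(B,C)$ with $(B,C)\sim\mu\times\nu$, the hypothesis gives $(D,G)\sim\tilde\mu\times\tilde\nu$, so in particular $D$ and $G$ are independent. As $A$ is independent of $(B,C)$ it is independent of $(D,G)$, and combining these facts shows $A,D,G$ are jointly independent with $A,D\sim\tilde\mu$ and $G\sim\tilde\nu$. Now $(E,K)=F^{-1}(A,G)$ is a measurable function of $(A,G)$ alone, hence independent of $D$; moreover $(A,G)\sim\tilde\mu\times\tilde\nu$ and $F^{-1}(\tilde\mu\times\tilde\nu)=\mu\times\nu$ (applying the push-forward $F^{-1}$ to the hypothesis), so $(E,K)\sim\mu\times\nu$. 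Therefore $(D,E,K)\sim\tilde\mu\times(\mu\times\nu)$, i.e.\ $\overline{F}(\tilde\mu\times\mu\times\nu)=\tilde\mu\times\mu\times\nu$, which is $(b)$.

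The remaining and most delicate implication is $(a)\Rightarrow(c)$, where the measure $\tilde\nu$ must be produced. My choice is to let $\tilde\nu$ be the law of $G$, equivalently the second marginal of $F(\mu\times\nu)$. The crucial observation is that, because $A$ is independent of $(B,C)$ while $G=F^{(2)}(B,C)$ depends on $(B,C)$ only, $A$ is independent of $G$; hence $(A,G)\sim\tilde\mu\times\tilde\nu$ \emph{as a product}. Condition $(a)$ says precisely that $(E,K)=F^{-1}(A,G)\sim\mu\times\nu$, i.e.\ $F^{-1}(\tilde\mu\times\tilde\nu)=\mu\times\nu$; pushing this forward by $F$ yields $F(\mu\times\nu)=\tilde\mu\times\tilde\nu$, which is $(c)$.

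I expect the main obstacle to be exactly this last step, and specifically the recognition that the independence of the ``fresh'' variable $A$ from the pair $(B,C)$ is what forces $F(\mu\times\nu)$ to be a product measure whose first marginal is the \emph{prescribed} $\tilde\mu$, rather than merely some product. Once this decoupling is seen, the identification of $\tilde\nu$ and the verification of the detailed balance condition are immediate. Everywhere else the content is routine bookkeeping of independence under $F$ and $F^{-1}$, using only that push-forward by a bijection satisfies $F\circ F^{-1}=\mathrm{id}$; the involution property of $\overline{F}$ noted after \eqref{Fbar} provides a convenient consistency check on the computation in $(c)\Rightarrow(b)$ but is not strictly needed for the argument.
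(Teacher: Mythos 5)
Your proof is correct and is essentially the standard argument for this statement (which the paper itself quotes from \cite[Proposition 2.8]{CS2} without reproducing a proof): realise $\tilde{\mu}\times\mu\times\nu$ by independent random variables, push independence through $F$ and $F^{-1}$, and for (a)$\Rightarrow$(c) take $\tilde{\nu}$ to be the law of $G=F^{(2)}(B,C)$, which is independent of the ``fresh'' variable $A$ by construction. Your side remark that the involution property of $\overline{F}$ is not actually needed in the argument is also accurate.
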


We now proceed to become more specific in our discussion. For the random polymer models of interest in \cite{CN}, as we will outline in more detail in Section \ref{sec2}, the maps $R$ that arise take the form
\begin{equation}\label{rab}
R_{(\a,\b)}(a,b,c)= \left( \frac{ac+ \a b + \b ab}{c}, \frac{ac+ \a b+ \b ab }{b} \right)
\end{equation}
for some $\a,\b\in\mathbb{R}$ satisfying $\max\{\a,\b\}>0$. One can readily check that a corresponding $F$ is given by
\begin{equation}\label{Fab}
F_{(\a,\b)}(x,y)=\left(\frac{x(y-\a)}{y+\b x}, \frac{y}{x} \right).
\end{equation}
(Note that the choice of $F$ is not unique, but the flexibility that it admits is essentially only a change of scale in the second coordinate of its output, and the above form will be convenient for us.) Consequently, if one can characterise detailed balance solutions for the maps $F_{(\a,\b)}$, then one obtains all the stationary measures of the polymer models. In fact, by scaling and symmetry, there are only essentially four choices of $(\a,\b)$ that are relevant in the setting of \cite{CN}:
\begin{itemize}
  \item $(\a,\b)=(0,1)$, which corresponds to the inverse-gamma(/log-gamma) model of \cite{S,Serr};
  \item $(\a,\b)=(1,0)$, which corresponds to the gamma(/strict-weak) model of \cite{ISS,OO};
  \item $(\a,\b)=(-1,1)$, which corresponds to the inverse-beta model of \cite{Thiery};
  \item $(\a,\b)=(1,-1)$, which corresponds to the beta model of \cite{BC,BRS}, see also Remark \ref{brwre} for comments about the connection with the random walk in a beta-distributed random environment ($\b$-RWRE) of \cite{BC}.
\end{itemize}
Moreover, after some additional rescaling, one can check that solving the detailed balance equation for the corresponding $F_{(\a,\b)}$ is equivalent to solving it for one of the following bijections on $(0,\infty)^2$:
\[F_{\Gam,\tBeta}(x,y):=\left(x+y, \frac{x}{y} \right),\]
which has inverse
\[F_{\Gam,\tBeta}^{-1}(x,y):=\left(\frac{xy}{1+y}, \frac{x}{1+y} \right),\]
and the involution
\[F_{\tBeta,\tBeta}(x,y):=\left(\frac{1+x}{y}, \frac{1+x+y}{xy} \right).\]
Precisely, we will explain how to relate $R_{(0,1)}$ to $F_{\Gam,\tBeta}$, $R_{(1,0)}$ to $F_{\Gam,\tBeta}^{-1}$, and both $R_{(-1,1)}$ and $R_{(1,-1)}$ to $F_{\tBeta,\tBeta}$. (See Proposition \ref{p31} below.) In the course of the study, it will also be convenient to consider $R_{(1,1)}$, which is essentially equivalent to $R_{(-1,1)}$ and also relates to $F_{\tBeta,\tBeta}$ (see Remark \ref{r11rem} for the connection between $R_{(1,1)}$ and $R_{(-1,1)}$, and \cite[Lemma 4.1]{CN} and Proposition \ref{p31} again for the link with $F_{\tBeta,\tBeta}$). Importantly, the solutions of the detailed balance condition for both $F_{\Gam,\tBeta}$ and $F_{\tBeta,\tBeta}$ are completely described in classical results, see Proposition \ref{gb} below. As indicated by the notation, the distributions appearing in these statements are either a gamma distribution or a beta prime distribution. (See Appendix \ref{appa} for a list of the various parametric families that appear in this article.) As a result, one is able to characterise the stationary measures for the original polymer models. We summarise the results in Figure \ref{fig1}; for precise claims, see Theorem \ref{t42} (and also \cite{CN}). As further background, we note that \cite{CN2} contains an application of the stationarity property of the above polymer models to the understanding of their fluctuation exponents.

Towards describing the zero-temperature limits of the above-mentioned polymer models, we define, for each $\varepsilon>0$, a map $S_\varepsilon:\mathbb{R}\rightarrow (0,\infty)$ by setting
\[S_{\epsilon} (x):= e^{-x/\varepsilon}.\]
Moreover, slightly abusing notation, we write $S_\varepsilon(x_1,x_2,\dots,x_n):=(S_\varepsilon(x_1),S_\varepsilon(x_2),\dots,S_\varepsilon(x_n))$. The zero-temperature limit of the map $R_{(\a,\b)}$ at \eqref{rab} is then given by the piecewise linear function $R^{\z}_{(\a,\b)}$, as defined formally by
\begin{equation}\label{ztl}
R^{\z}_{(\a,\b)}(a,b,c):=\lim_{\varepsilon\downarrow 0}S_{\varepsilon}^{-1}\circ R_{(\a,\b)}\circ S_{\varepsilon}(a,b,c).
\end{equation}
We highlight that whilst in this article we view the term `zero-temperature limit' purely as a mathematical operation, it is naturally motivated by statistical physics considerations. Indeed, the polymer models described above will typically incorporate a temperature parameter in their definition, and the $\varepsilon\rightarrow 0$ limit of \eqref{ztl} represents taking this parameter to 0 in the sense that, just as the map $R_{(\a,\b)}$ gives a recursion equation for the partition function in the relevant positive-temperature case, the map $R^{\z}_{(\a,\b)}$ describes an analogous recursion for the corresponding zero-temperature model. (See Section \ref{sec2} for some concrete examples of polymer models and their zero-temperature limits.) We further note that the limiting procedure described at \eqref{ztl} is commonly referred to as ultra-discretisation in the discrete integrable systems literature.

The domain upon which the limit at \eqref{ztl} is defined will vary depending on the particular choice of $(\a,\b)$, and, in the $(1,-1)$ case, we will make a change of scale before taking the zero-temperature limit (see Subsection \ref{ztldef} for details). For the special cases of polymer models listed above, these limits again correspond to models that have been studied hitherto. Specifically:
\begin{itemize}
  \item $(\a,\b)=(0,1)$ corresponds to directed last passage percolation (LPP) with exponential/geometric waiting times, which essentially appeared in \cite{J};
  \item $(\a,\b)=(1,0)$ corresponds to directed first passage percolation (FPP) with exponential/geometric waiting times, as studied in the language of queuing systems in \cite{DMO};
  \item $(\a,\b)=(-1,1)$ or $(\a,\b)=(1,1)$ corresponds to the Bernoulli-exponential/geometric polymer of \cite{TD};
  \item $(\a,\b)=(1,-1)$ corresponds to Bernoulli-exponential/geometric FPP, which was introduced as a zero-temperature limit of the $\b$-RWRE in \cite{BC}, and has also been called the river delta model (see \cite{BR}).
\end{itemize}
(See \cite[Sections V.B.1, V.B.2 and V.B.3]{Thiery} for further background on the first three models, respectively.) Unlike in the positive-temperature case, the maps $R^{\z}_{(\a,\b)}$ in these classes can not always be decomposed as per the picture at \eqref{picture} directly, even by taking the map $F^{\z}_{(\a,\b)}$ to be the zero-temperature limit of $F_{(\a,\b)}$. (See Remark \ref{rrr} and the preceding discussion below.) However, by making certain changes of scale corresponding to those of the positive-temperature case, one can nonetheless identify a family of both continuous and discrete stationary measures for each $R^{\z}_{(\a,\b)}$ by considering the detailed balance condition for one of the following bijections on $\mathbb{R}^2$:
\[F_{\E,\AL}(x,y):=\left((x\wedge y), x-y \right),\]
which has inverse
\[F_{\E,\AL}^{-1}(x,y):=\left(x+y-(0\wedge y), x-(0\wedge y) \right),\]
and the involution
\[F_{\AL,\AL}(x,y):=\left((0\wedge x) - y, (0\wedge x\wedge y)-x-y \right),\]
where we use the notation $x\wedge y:=\min\{x,y\}$ (and later also $x\vee y:=\max\{x,y\}$). Note that $F_{\Gam,\tBeta}^\z=F_{\E,\AL}$ and $F_{\tBeta,\tBeta}^\z=F_{\AL,\AL}$, with these zero-temperature limits being defined as at \eqref{ztl}. See Proposition \ref{p34} for the links between maps, and Theorem \ref{t45} for our main result on the stationary measures of zero-temperature random polymer models, which we believe is new for the $(\a,\b)=(1,-1)$ case in particular\footnote{In \cite[Section 1.3]{BR}, it was written that the stationary measure of Bernoulli-exponential FPP, which is the zero-temperature limit of the beta polymer, was computed in \cite{Thiery}. However, this seems to stem from a conflation of Bernoulli-exponential FPP and the Bernoulli-exponential polymer, which was studied in \cite{Thiery}. We highlight that these models are not the same; whereas the first is a zero-temperature limit of the beta polymer, the second is a zero-temperature limit of the inverse-beta polymer. Moreover, in \cite{Thiery}, it is stressed that the beta polymer is not present in the framework of that article, and `somehow lives in a different class'.}. (See Figure \ref{fig1} again.) Similarly to the positive-temperature case, as indicated by the notation, the distributions appearing in the latter result are either an exponential(/geometric) distribution or an asymmetric Laplace(/discrete asymmetric Laplace) distribution.

Two outstanding issues with the argument of the previous paragraph are as follows. Firstly, whilst we believe the detailed balance solutions that we present for $F_{\AL,\AL}$ are the only ones, we do not yet have a proof of this. (For $F_{\E,\AL}$, characterisation of solutions of the detailed balance equation is classical.) Secondly, since the links between the zero-temperature polymer setting and the basic bijections $F_{\E,\AL}$ and $F_{\AL,\AL}$ are in general not bijective, it is not possible to conclude that the stationary measures we identify for the polymer ones provide a complete characterisation of such. See Section \ref{sec6} for more detailed comments in this direction.

\begin{figure}
\begin{center}
\includegraphics[width=\textwidth]{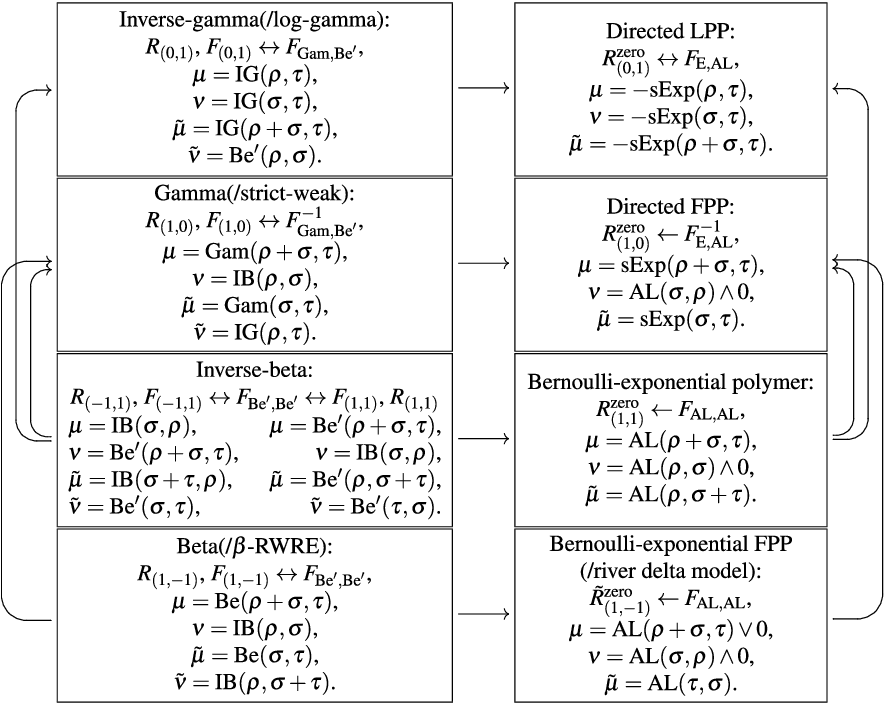}
\end{center}
\caption{The left-hand side shows the positive-temperature polymer models considered in this article, and the right-hand side shows the corresponding zero-temperature limits. The maps $R$ are those appearing in the original model (as determined by \eqref{rab} or \eqref{ztl}, except in the zero-temperature model with $(\alpha,\beta)=(1,-1)$, for which we make a change of variables before taking the limit at \eqref{ztl}). Moreover, in the positive-temperature case, the bijections $F$ are those corresponding to $R$ via a decomposition as at \eqref{picture}. In each case, we also note which of the basic bijections $F_{\Gam,\tBeta}$, $F_{\tBeta,\tBeta}$, $F_{\E,\AL}$ and $F_{\AL,\AL}$ the map $R$ can be built from (see Corollary \ref{c32} and Proposition \ref{p34}). The measures shown satisfy $R(\tilde{\mu}\times\mu\times\nu)=\mu\times\nu$, and also $F(\mu\times\nu)=\tilde{\mu}\times\tilde{\nu}$ in the positive-temperature case. See Theorems \ref{t42} and \ref{t45} for precise statements concerning the stationary measures of the random polymer models, including the additional discrete measures that arise in the zero-temperature case. Various relationships between models, as indicated by arrows, are discussed in Section \ref{sec5}; in all cases, these connect the three distributional parameters of the initial and limiting models, apart from in the links between zero-temperature models, which each see a reduction from three to two parameters (which means that not all the potential limiting distributions can be obtained by the procedures that we describe).}\label{fig1}
\end{figure}

As well as identifying the stationary measures of random polymer models, we investigate how these are related. To this end, we start by considering the connections between our basic maps, showing how each of $F_{\Gam,\tBeta}$ (and its inverse), $F_{\AL,\AL}$ and $F_{\E,\AL}$ (and its inverse) can be obtained through certain limiting operations from $F_{\tBeta,\tBeta}$, see Figure \ref{fig2}. (Cf. The centrality of the inverse-beta model in \cite[Figure 1]{Thiery}.) Furthermore, applying the same changes of variables, we can relate (some of the continuous) solutions of the detailed balance condition for these maps. Such observations naturally translate to the maps underlying the positive- and zero-temperature random polymer models considered in this article, and in some instances transfer to convergence of the partition function of the models in question. The latter links are shown by arrows on Figure \ref{fig1}, and discussed in more detail in Remarks \ref{ptptrem}, \ref{ptztrem} and \ref{ztrem}. Whilst we do give a new connection between the bijection $F_{(\a,\b)}$ and detailed balance solutions in the beta random polymer setting and those in the inverse-gamma setting (see Propositions \ref{p53}(d) and \ref{p55}(d)), we highlight that there remains a question concerning the relation of the partition function of these two models (and similarly in the zero-temperature setting).

The remainder of the article is organised as follows. Section \ref{sec2} contains precise definitions of the random polymer models studied in this article. In Section \ref{sec3}, we explain how the underlying recursion maps relate to one of the bijections $F_{\Gam,\tBeta}$,  $F_{\tBeta,\tBeta}$, $F_{\E,\AL}$ and $F_{\AL,\AL}$. In Section \ref{sec4}, we then apply the preceding results to obtain descriptions of stationary measures for the random polymer models. Following this, Section \ref{sec5} features a discussion of various relations between the detailed balance solutions identified for the basic bijections, and between the stationary measures for the random polymer maps $R_{(\a,\b)}$, as well as some remarks on connections between the partition functions of the corresponding polymer models. We conclude in Section \ref{sec6} with some conjectures and problems left open by this work. As noted above, we also include an appendix in which definitions of the various parametric families appearing in the discussion are collated.

\section{Polymer models}\label{sec2}

\subsection{Positive-temperature versions}

We start by considering a directed random polymer model with edge weights, which will naturally include a corresponding version with site weights. The key quantity of interest here will be the partition function
\[Z_{n,m}=\sum_{\pi : (0,0) \to (n,m)}\left\{\prod_{e \in \pi}Y_{e}\right\},\qquad \forall n,m\in\mathbb{Z}_+,\]
where $\mathbb{Z}_+:=\{0,1,\dots\}$, the sum is taken over up-right paths $\pi=(e_1,e_2,\dots, e_{n+m})$ from $(0,0)$ to $(n,m)$ on $\mathbb{Z}_+^2$, and the edge weights are defined by setting
\[Y_e:=\left\{\begin{array}{ll}
         u(X_{k,\ell}), & \mbox{if }e_i=((k-1,\ell), (k,\ell)),\\
         v(X_{k,\ell}), & \mbox{if }e_i=((k,\ell-1), (k,\ell)),
       \end{array}\right.\]
with $(X_{n,m})_{n,m \in \mathbb{Z}_+}$ being an array of i.i.d.\ $(0,\infty)$-valued random variables, and $u :(0,\infty) \to (0,\infty)$ and $v : (0,\infty) \to (0,\infty)$ being positive functions on $(0,\infty)$. For this model, we have the following recursive equation for the partition function:
\begin{equation}\label{RPe}
Z_{n,m}=u(X_{n,m})Z_{n-1,m}+v(X_{n,m})Z_{n,m-1}.
\end{equation}
Moreover, by setting
\[U_{n,m}:=Z_{n,m}/Z_{n-1,m}, \qquad V_{n,m}:=Z_{n,m}/Z_{n,m-1},\]
the recursive equation at \eqref{RPe} can be rewritten as at \eqref{R-eq}, or more specifically,
\[R_{RPe}(X_{n,m}, U_{n,m-1},V_{n-1,m})=(U_{n,m}, V_{n,m}),\]
where
\[R_{RPe}(a,b,c)=\left(u(a)+v(a)\frac{b}{c}, u(a)\frac{c}{b}+v(a) \right).\]
Note that if $u(x)=v(x)=x$, then the model reduces to a directed random polymer model with site weights, namely the partition function can be written
\[Z_{n,m}=\sum_{\pi : (0,0) \to (n,m)}\left\{\prod_{(k,\ell)\in \pi\backslash\{(0,0)\}}X_{k,\ell}\right\}.\]
In this case, $R_{RPe}$ simplifies to
\[R_{RPs}(a,b,c)=\left(\frac{a(b+c)}{c}, \frac{a(b+c)}{b} \right).\]

We next explain formally how the map $R_{RPe}$ can be decomposed as at \eqref{picture}. First, let $G_{RPe} :(0,\infty)^2 \to (0,\infty)^2$ be given by
\begin{equation}\label{gmap}
G_{RPe}(x,y):=\left(u(x)+\frac{v(x)}{y}, u(x)y+v(x)\right),
\end{equation}
and observe $R_{RPe}(a,b,c)=G_{RPe}(a, \frac{c}{b})$. Now, if $H_y(x):=u(x)y+v(x)$ is invertible on a suitable interval for each $y$, then one can check that the function $F_{RPe}$ defined by
\[F_{RPe}(x,y):=\left(H^{-1}_{\frac{y}{x}}(y), \frac{y}{x}\right)\]
satisfies $G_{RPe} \circ F_{RPe} (x,y)=F_{RPe} \circ G_{RPe}(x,y)=(x,y)$ on a certain subset of $(0,\infty)^2$. Consequently, we have that \eqref{FR} holds for this choice of $F$, i.e.\ $R_{RPe}=\overline{F}_{RPe}^{(2,3)}$. By Proposition \ref{twothree}, to find solutions of \eqref{Rinv}, it is thus enough to solve the detailed balance equation for $F_{RPe}$.

In \cite{CN}, the authors carefully study the detailed balance problem for $F_{RPe}$ under the conditions that $u(x)=x$, $v$ is suitably differentiable, $H_y$ satisfies the necessary invertibility, and some further technical assumptions on the measures. In this setting, the authors establish that there exists a solution of \eqref{db} only when $v(x)=\alpha+\beta x$ for some $\alpha,\beta \in \R$ such that $\max\{\alpha,\beta\} >0$. Note that in these cases, the maps $R_{RPe}$ and $F_{RPe}$ are given by the $R_{(\a,\b)}$ and $F_{(\a,\b)}$ appearing at \eqref{rab} and \eqref{Fab}, respectively. Moreover, in \cite{CN}, the solutions of \eqref{db} are characterised explicitly. By making simple changes of variables, it is shown that these can be reduced to the four cases listed in the introduction, with $(\alpha,\beta)$ being given by $(0,1)$, $(1,0)$, $(-1,1)$ or $(1,-1)$. (The map $R_{RPs}$ is equal to $R_{(0,1)}$.) For convenience, we summarise the maps $R_{(\a,\b)}$, $F_{(\a,\b)}$ and the domains/co-domains that we consider in Table \ref{maptable}.

\begin{table}
\begin{center}
\begin{tabular}{m{1.2cm}|m{3.5cm}|m{2.5cm}|m{5cm}}
  $(\a,\b)$ &  $R_{(\a,\b)}(a,b,c)$ & $F_{(\a,\b)}(x,y)$ &\vphantom{a}$I_1\times I_2\rightarrow J_1\times J_2$\\[2ex] \hline
  $(0,1)$ & $\left(\frac{a(b+c)}{c}, \frac{a(b+c)}{b}\right)$& $\left(\frac{xy}{x+y}, \frac{y}{x}\right)$ &\vphantom{a}$(0,\infty)^2 \to (0,\infty)^2$\\[2ex]
  \hline
  $(1,0)$ & $\left(\frac{ac +b }{c}, \frac{ac+b}{b}\right)$ & $\left(\frac{x(y-1)}{y}, \frac{y}{x}\right)$ &\vphantom{a} $(0,\infty)\times (1,\infty) \to (0,\infty)^2$\\[2ex]\hline
 $(-1,1)$ & $\left(\frac{ac-b+ab}{c}, \frac{ac-b+ab}{b}\right)$ & $\left(\frac{x(y+1)}{x+y}, \frac{y}{x}\right)$ & \vphantom{a}$(1,\infty) \times (0,\infty) \to (1,\infty) \times (0,\infty)$\\[2ex] \hline
$(1,1)$& $\left(\frac{ac+b+ab}{c}, \frac{ac+b+ab}{b}\right)$ & $\left(\frac{x(y-1)}{x+y}, \frac{y}{x}\right)$ &\vphantom{a}$(0,\infty) \times (1,\infty) \to  (0,\infty)^2$ \\[2ex]\hline
$(1,-1)$ & $\left(\frac{ac+b-ab}{c}, \frac{ac+b-ab}{b}\right)$ & $\left(\frac{x(y-1)}{y-x}, \frac{y}{x}\right)$ &\vphantom{a}$(0,1) \times (1,\infty) \to (0,1) \times (1,\infty)$\\[2ex]
\end{tabular}
\end{center}
\caption{The maps $R_{(\a,\b)}$, $F_{(\a,\b)}$ and the domains/co-domains studied in this article. (For the domains/co-domains, we use the convention from the introduction that $R:J_1\times I_1\times I_2 \rightarrow I_1\times I_2$ and $F:I_1\times I_2\rightarrow J_1\times J_2$.)}\label{maptable}
\end{table}

\begin{rem}\label{brwre}  The random walk in a beta-distributed random environment of \cite{BC} can also be described within the above framework. Specifically, let $(\hat{X}_{n,m})_{n \in\Z_+, m \in \Z}$ be i.i.d.\ $(0,1)$-valued random variables. Let $S_n$ be the position of a random walk in a random environment given by
\[\mathbf{P}_{\hat{X}}(S_{n+1}=m+1|S_n=m)=\hat{X}_{n,m}=1-\mathbf{P}_{\hat{X}}(S_{n+1}=m-1|S_n=m),\]
where $\mathbf{P}_{\hat{X}}$ is the distribution of the random walk given the environment $(\hat{X}_{n,m})_{n \in\Z_+, m \in \Z}$. If we define
\[\hat{Z}_{n,m}:=\mathbf{P}_{\hat{X}}(S_m \ge m-2n+2),\]
then one can check that
\[\hat{Z}_{n,m}:=\hat{X}_{n,m}\hat{Z}_{n,m-1}+(1-\hat{X}_{n,m})\hat{Z}_{n-1,m-1}\]
Making the change of variables $\hat{Z}_{n,m}\equiv Z_{n,m-n}$ and $\hat{X}_{n,m}\equiv X_{n,m-n}$, we arrive at \eqref{RPe} with $u(x)=1-x$ and $v(x)=x$.
\end{rem}

\subsection{Zero-temperature limits}\label{ztldef}

We next consider the zero-temperature directed random polymer models with edge weights, which includes directed last/first passage percolation with site weights. In this case, the partition function is given by
\[Z_{n,m}=\min_{\pi : (0,0) \to (n,m)}\left\{\sum_{e \in \pi}Y_e\right\},\qquad \forall n,m\in\mathbb{Z}_+,\]
with the minimum being taken over up-right paths $\pi$ from $(0,0)$ to $(n,m)$ on $\mathbb{Z}_+^2$, and the weights $Y_e$ being defined from i.i.d.\ random variables $(X_{n,m})_{n,m \in \mathbb{Z}_+}$ similarly to the previous subsection. The difference is that now we allow the latter random variables to be $\R$-valued, and we suppose $u :\R \to \R$ and $v : \R \to \R$ are functions on $\R$. The recursive equation for the partition function now becomes:
\begin{equation}\label{zRPe}
Z_{n,m}=\min\{ Z_{n-1,m} +u(X_{n,m}), Z_{n,m-1}+v(X_{n,m})\}.
\end{equation}
Moreover, by setting
\[U_{n,m}:=Z_{n,m}-Z_{n-1,m}, \qquad V_{n,m}:=Z_{n,m}-Z_{n,m-1},\]
the recursive equation \eqref{zRPe} can be rewritten as
\[R_{RPe}^{\z}(X_{n,m}, U_{n,m-1},V_{n-1,m})=(U_{n,m}, V_{n,m}),\]
where
\[R_{RPe}^\z(a,b,c)=\left(\min\{u(a),b-c+v(a)\}, \min\{u(a)+c-b,v(a)\}\right).\]
We note that if the above $u$ and $v$ are the zero-temperature limits of the $u$ and $v$ defining $R_{RPe}$ (i.e.\ analogously to \eqref{ztl}, are given by $\varepsilon\rightarrow0$ limits of $S_\varepsilon^{-1}\circ u\circ S_\varepsilon$ and $S_\varepsilon^{-1}\circ v\circ S_\varepsilon$ for the positive temperature version of $u$ and $v$, respectively), then the function $R_{RPe}^\z$ is the zero-temperature limit of $R_{RPe}$ in the sense of \eqref{ztl}. As noted in the introduction, the domains upon which we can do this will vary on the specific form of the functions $u$ and $v$. In particular, amongst the special cases of $R_{(\a,\b)}$ of interest here, we have:
\begin{itemize}
\item $R^{\z}_{(0,1)}:\R^3 \to \R^2$ is given by taking $u(a)=v(a)=a$, i.e.\
\[R^{\z}_{(0,1)}(a,b,c)= \left( \min\{a, a+b-c\}, \min\{ a+c -b,a \} \right);\]
\item  $R^\z_{(1,0)} : \R \times \R \times (-\infty,0] \to  \R \times (-\infty,0]$ is given by taking $u(a)=a$, $v(a)=0$, i.e.\
\[R^{\z}_{(1,0)}(a,b,c)= \left( \min\{a,b-c \}, \min\{a+c-b,0\}\right);\]
\item $R^{\z}_{(1,1)} : \R \times \R \times (-\infty,0]  \to \R \times (-\infty,0] $ is given by $u(a)=a$, $v(a)=\min\{a,0\}$, i.e.\
\[R^{\z}_{(1,1)}(a,b,c)=\left(\min\{a, \min\{a,0\}+b-c\}, \min\{a+c-b, \min\{a,0\} \}\right).\]
\end{itemize}
Note that we choose to consider the zero-temperature limit of $R_{(1,1)}$ rather than the equivalent map $R_{(-1,1)}$ to avoid the negative terms that appear in the latter map, since these lead to complications in defining the zero-temperature limit; in particular, one has to be more careful about the domain upon which the limit at \eqref{ztl} is defined. To deal with the same issue in the remaining case, that is $(\a,\b)=(1,-1)$, it will be convenient to make a change of variables before taking a limit. In particular, we take the zero-temperature limit of the map $\tilde{R}_{(1,-1)}:(0,\infty) \times (0,1) \times (1,\infty) \to (0,1) \times (1,\infty)$ given by
\begin{equation}\label{tilder}
\tilde{R}_{(1,-1)}(a,b,c)=\left(\frac{1}{1+a}+\frac{ab}{(1+a)c}, \frac{c}{(1+a)b}+ \frac{a}{1+a} \right)
\end{equation}
to obtain the following:
\begin{itemize}
\item $\tilde{R}^{\z}_{(1,-1)}: \R \times [0,\infty) \times (-\infty,0] \to [0,\infty) \times (-\infty,0]$ is given by the expression for $R^\z_{RPe}$ above with $u(a)=-\min\{0,a\}$, $v(a)=\max\{a,0\}$, i.e.\
\begin{align*}
&\qquad\qquad\tilde{R}^{\z}_{(1,-1)}(a,b,c)\\
&\qquad\qquad=\left(\min \{-\min\{0,a\} , \max\{a,0\} +b-c\}, \min\{-\min\{0,a\}+c-b,\max\{a,0\} \}\right)\\
&\qquad\qquad=\left(\min \{-\min\{0,a\} , b-c\}, \min\{-\min\{0,a\}+c-b,0 \}\right).
\end{align*}
\end{itemize}
(See \eqref{tildereq} for the precise connection between $\tilde{R}_{(1,-1)}$ and ${R}_{(1,-1)}$, and Remark \ref{Rtilderem} for discussion of a link between the zero-temperature versions of these maps.)

Now, suppose we want to decompose the maps $R^\z_{(\a,\b)}$ as at \eqref{picture} by following the argument of the previous subsection. Analogously to \eqref{gmap}, one might then start by considering the map $G :\R^2 \to \R^2$ given by
\[G(x,y):=\left( \min\{ u(x), v(x)- y\}, \min\{ u(x)+y, v(x) \} \right),\]
so that $R^\z_{RPe}(a,b,c)=G(a, c-b)$. If $H_y(x):=\min\{ u(x)+y, v(x) \}$ was invertible on a suitable interval for each $y$, then $F(x,y):=(H^{-1}_{y-x}(y), y-x)$ would satisfy $G \circ F (x,y)=F \circ G(x,y)=(x,y)$ and $\overline{F}^{(2,3)}=R^\z_{RPe}$. Characterising the solutions of \eqref{Rinv} for $R^\z_{RPe}$ would then reduce to characterising the solutions of the detailed balance equation for $F$. However, in the particular examples of $R^\z_{(\a,\b)}$ considered here, the desired invertibility hardly holds, see Remark \ref{rrr}. To handle this issue, as is detailed in the next section, we make changes of variables to write the maps $R^\z_{(\a,\b)}$ in terms of the more convenient, invertible maps $F_{\E,\AL}$ and $F_{\AL,\AL}$.

\begin{rem}\label{rrr}
With regards to the four zero-temperature polymer models considered here, the invertibility of the map $x\mapsto H_y(x)$ only holds in general for $R^{\z}_{(0,1)}$. In this case, one can compute that
\[R^{\z}_{(0,1)}=\overline{{F}^\z_{(0,1)}}^{(2,3)},\]
where ${F}^\z_{(0,1)}:\mathbb{R}^2\rightarrow\mathbb{R}^2$, as defined by
\begin{equation}\label{fz01}
{F}^\z_{(0,1)}(x,y)=\left(x\vee y,y-x\right),
\end{equation}
is the zero-temperature limit of $F_{(0,1)}$.
\end{rem}

\begin{rem}
The case when $u(a)=v(a)=a$, i.e.\ $R^{\z}_{(0,1)}$, clearly corresponds to a directed first passage percolation model with site weights. Moreover, when $u(x)=v(x)=-x$, then by letting $\tilde{Z}_{n,m}:=-Z_{n,m}$ and $\tilde{X}_{n,m}:=-X_{n,m}$, we have
\[\tilde{Z}_{n,m}=\max_{\pi : (0,0) \to (n,m)}\left\{\sum_{(k,\ell) \in \pi\backslash\{(0,0)\}}\tilde{X}_{k,\ell}\right\},\]
which is a directed last passage percolation partition function that satisfies
\begin{equation}\label{LLP}
\tilde{Z}_{n,m}=\max\{ \tilde{Z}_{n-1,m}, \tilde{Z}_{n,m-1}\}+\tilde{X}_{n,m}.
\end{equation}
By letting
\[\tilde{U}_{n,m}:=\tilde{Z}_{n,m}-\tilde{Z}_{n-1,m}, \qquad \tilde{V}_{n,m}:=\tilde{Z}_{n,m}-\tilde{Z}_{n,m-1},\]
the recursive equation \eqref{LLP} can be rewritten
\[R_{DLPP}(\tilde{X}_{n,m}, \tilde{U}_{n,m-1},\tilde{V}_{n-1,m})=(\tilde{U}_{n,m}, \tilde{V}_{n,m}),\]
where
\[R_{DLPP}(a,b,c)=(a+b-\min\{b,c\}, a+c-\min\{b,c\})=-R^{\z}_{(0,1)}(-a,-b,-c),\]
which is essentially the same map as for the case of first passage percolation.
\end{rem}

\section{Reduction to basic bijections}\label{sec3}

As we commented in the introduction, there are four basic bijections that appear in this study: $F_{\Gam,\tBeta}$, $F_{\tBeta,\tBeta}$, $F_{\E,\AL}$ and $F_{\AL,\AL}$. The aim of this section is to explain how the bijections $F_{(0,1)}$, $F_{(1,0)}$, $F_{(-1,1)}$ (or $F_{(1,1)}$) and $F_{(1,-1)}$ arising in the positive-temperature polymer models can each be transformed to one of $F_{\Gam,\tBeta}$, $F_{\Gam,\tBeta}^{-1}$ or $F_{\tBeta,\tBeta}$ by a bijective transformation of the state space. We will further present corresponding links between the zero-temperature maps $R^\z_{(0,1)}$, $R^\z_{(1,0)}$, $R^\z_{(1,1)}$ and $\tilde{R}^\z_{(1,-1)}$ and the bijections $F_{\E,\AL}$, $F_{\E,\AL}^{-1}$ or $F_{\AL,\AL}$.

\subsection{Positive-temperature versions}

To describe the change of scales of interest in the positive-temperature case, we introduce a number of bijections. Firstly, we let $I_d$ be the identity map on $\mathbb{R}$; clearly this is a bijection on any subset of $\mathbb{R}$. Next, define $I: (0,\infty) \to (0,\infty)$ by setting
\[I(x)=x^{-1}.\]
We note that this is also a bijection between $(0,1)$ and $(1,\infty)$, and vice versa. Moreover, let $Q : (0,1) \to (0,\infty)$ be the bijection given by
\[Q(x)=x^{-1}-1;\]
the corresponding inverse is given by
\[Q^{-1}(x)=\frac{1}{1+x}.\]
We note that $J := Q^{-1} \circ I \circ Q : (0,1) \to (0,1)$ satisfies $J(x)=1-x$. Finally, we suppose $\pi: (0,\infty)^2 \to  (0,\infty)^2$ is the coordinate transposition map, i.e.\
\[\pi(x,y)=(y,x).\]
Given this notation, it is an elementary exercise to check the following proposition. In the statement of this, we write $f\times g$ for the product of maps $f$ and $g$, i.e.\ the map given by $(f\times g)(x,y):=(f(x),g(y))$. We recall that the maps $F_{\Gam,\tBeta}$ and $F_{\tBeta,\tBeta}$ are considered to be bijections on $(0,\infty)^2$.

\begin{prop}[{cf.\cite{CN}}]\label{p31}
(a) The bijection $F_{(0,1)}:(0,\infty)^2\rightarrow(0,\infty)^2$ satisfies
\[(I \times I_d) \circ F_{(0,1)}  \circ (I \times I) =F_{\Gam,\tBeta}.\]
(b) The bijection $F_{(1,0)}:(0,\infty)\times(1,\infty)\rightarrow(0,\infty)^2$ satisfies
\[\pi \circ (I_d \times I) \circ F_{(1,0)} \circ (I_d \times (I \circ Q^{-1} \circ I)) =F_{\Gam,\tBeta}^{-1}.\]
(c)(i) The bijection $F_{(-1,1)}:(1,\infty)\times(0,\infty)\rightarrow(1,\infty)\times(0,\infty)$ satisfies
\[\pi \circ ( (I \circ Q \circ I) \times I )\circ F_{(-1,1)} \circ ((I \circ Q^{-1}) \times I_d) =F_{\tBeta,\tBeta}.\]
(ii) The bijection $F_{(1,1)}:(0,\infty)\times(1,\infty)\rightarrow(0,\infty)^2$ satisfies
\[ \pi \circ ( I \times I_d )\circ F_{(1,1)} \circ ( I_d \times (I \circ Q^{-1})) \circ \pi =F_{\tBeta,\tBeta}.\]
(d)  The bijection $F_{(1,-1)}:(0,1)\times(1,\infty)\rightarrow(0,1)\times(1,\infty)$ satisfies
\[( Q \times (Q \circ I) )\circ F_{(1,-1)} \circ ((Q^{-1} \circ I) \times (I \circ Q^{-1} \circ I) ) \circ \pi =F_{\tBeta,\tBeta}.\]
\end{prop}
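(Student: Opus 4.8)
The plan is to verify each of the four identities by direct substitution, since every map appearing in the statement --- the polymer bijections $F_{(\a,\b)}$ and the scale changes built from $I$, $I_d$, $Q$, $Q^{-1}$ and $\pi$ --- is given by an explicit elementary formula. Concretely, I would first record the relevant $F_{(\a,\b)}$ from the table in Section~\ref{sec2}, namely $F_{(\a,\b)}(x,y)=\left(\tfrac{x(y-\a)}{y+\b x},\tfrac{y}{x}\right)$ on its stated domain, together with $I(x)=x^{-1}$, $Q(x)=x^{-1}-1$, $Q^{-1}(x)=(1+x)^{-1}$, and the transposition $\pi$. The target maps $F_{\Gam,\tBeta}(x,y)=\left(x+y,\tfrac{x}{y}\right)$ and $F_{\tBeta,\tBeta}(x,y)=\left(\tfrac{1+x}{y},\tfrac{1+x+y}{xy}\right)$ are likewise explicit, so in each part the claim reduces to an algebraic identity between two rational expressions in $(x,y)$, which I would establish by evaluating the left-hand composition on a general point and simplifying.

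For each part I would apply the maps from right to left. In part (a), for instance, $I\times I$ sends $(x,y)$ to $(x^{-1},y^{-1})$, then $F_{(0,1)}$ gives $\left(\tfrac{1}{x+y},\tfrac{x}{y}\right)$ after clearing denominators, and the outer $I\times I_d$ inverts the first coordinate to yield $\left(x+y,\tfrac{x}{y}\right)=F_{\Gam,\tBeta}(x,y)$. Parts (b)--(d) follow the same template: substitute the inner scale changes, feed the result into $F_{(\a,\b)}$, and check after simplification that the outer scale change produces $F_{\Gam,\tBeta}^{-1}$ (part (b)) or $F_{\tBeta,\tBeta}$ (parts (c) and (d)). Alongside the algebra, I would confirm that the indicated domains and codomains match --- for example that $Q^{-1}\circ I$ carries $(0,\infty)$ bijectively onto $(0,1)$ while $I\circ Q^{-1}\circ I$ carries it onto $(1,\infty)$ --- so that each composition is a well-defined bijection between the stated state spaces; this is what justifies reading the identity as one of bijections and not merely of formulas.

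The only genuine difficulty is bookkeeping in parts (c) and (d), where two or three nested scale changes compound and the cancellations are less transparent. The observation that keeps the computation honest is that the combinations $v-1$, $v-u$ and $(1+x)(1+y)-xy=1+x+y$ collapse cleanly. For example, in part (d), after the inner maps and $\pi$ one obtains the point $(u,v)$ with $u=\tfrac{y}{1+y}$ and $v=\tfrac{1+x}{x}$, whence $F_{(1,-1)}(u,v)=\left(\tfrac{y}{1+x+y},\tfrac{(1+x)(1+y)}{xy}\right)$, and the outer $Q\times(Q\circ I)$ then returns exactly $F_{\tBeta,\tBeta}(x,y)$. Once these simplifications are carried out carefully for each part, the proposition follows; no step requires anything beyond elementary manipulation of rational functions, which is why I would present it as the routine verification indicated in the text.
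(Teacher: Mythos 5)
Your proposal is correct and follows exactly the route the paper intends: the paper offers no written proof beyond declaring the proposition "an elementary exercise to check," and your direct right-to-left substitution of the explicit formulas (with the domain checks for the scale changes) is precisely that verification; your intermediate computations in parts (a) and (d), including the key simplification $(1+x)(1+y)-xy=1+x+y$, are accurate.
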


\begin{rem}\label{f11rem} Applying the relation $F_{(1,1)}=((Q \circ I) \times I) \circ F_{(-1,1)} \circ \pi$, parts (c)(i) and (c)(ii) of the above result are readily deduced from each other.
\end{rem}

For understanding the zero-temperature versions of the models, the following straightforward corollary of Proposition \ref{p31} will be useful. For establishing the result, we note that the map $\tilde{R}_{(1,-1)}$ defined at \eqref{tilder} satisfies
\begin{equation}\label{tildereq}
\tilde{R}_{(1,-1)}\equiv R_{(1,-1)} \circ (Q^{-1} \times I_d \times I_d )
\end{equation}
on the relevant domain. Moreover, it is maybe helpful to have in mind that for bijections $F$ and $G$ whose domains and codomains are subsets of $\mathbb{R}^2$, if $(f_1\times g_1)\circ G\circ (f_2\times g_2)=F$, where $f_1,g_1,f_2,g_2$ are real-valued functions defined on appropriate subsets of $\mathbb{R}$ with $f_1$ a bijection and $g_1$ an injection, then \[\overline{G}^{(2,3)}\circ \left(f_1^{-1}\times f_2\times g_2\right)=\left(f_2\times g_2\right)\circ \overline{F}^{(2,3)}.\]

\begin{cor}\label{c32}
(a) The map $R_{(0,1)}:(0,\infty)^3\rightarrow(0,\infty)^2$ satisfies
\[R_{(0,1)} \circ (I \times I \times I) =(I \times I) \circ \overline{F_{\Gam,\tBeta}}^{(2,3)}.\]
(b) The map $R_{(1,0)}:(0,\infty)^2\times(1,\infty)\rightarrow(0,\infty)\times(1,\infty)$ satisfies
\[R_{(1,0)} \circ (I_d \times I_d \times (I \circ Q^{-1} \circ I)) = (I_d \times (I \circ Q^{-1} \circ I)) \circ \overline{ (\pi  \circ  F_{\Gam,\tBeta}^{-1} )}^{(2,3)}.\]
(c)(i) The map $R_{(-1,1)}:(1,\infty)^2\times(0,\infty)\rightarrow(1,\infty)\times(0,\infty)$ satisfies
\[R_{(-1,1)} \circ ((I \circ Q^{-1} \circ I)\times (I \circ Q^{-1}) \times I_d) = ((I \circ Q^{-1}) \times I_d) \circ  \overline{ (\pi\circ F_{\tBeta,\tBeta})}^{(2,3)}.\]
(ii) The map $R_{(1,1)}:(0,\infty)^2\times(1,\infty)\rightarrow(0,\infty)\times(1,\infty)$ satisfies
\[R_{(1,1)} \circ ( I\times  I_d  \times (I \circ Q^{-1})) = (  I_d  \times (I \circ Q^{-1})) \circ  \overline{(\pi \circ F_{\tBeta,\tBeta} \circ \pi) }^{(2,3)}.\]
(d)  The map $\tilde{R}_{(1,-1)}:(0,\infty)\times(0,1)\times(1,\infty)\rightarrow(0,1)\times(1,\infty)$ satisfies
\[\tilde{R}_{(1,-1)} \circ (I_d \times (Q^{-1} \circ I) \times (I \circ Q^{-1} \circ I) )= ((Q^{-1} \circ I) \times (I \circ Q^{-1} \circ I)) \circ  \overline{ (F_{\tBeta,\tBeta} \circ \pi)}^{(2,3)}.\]
\end{cor}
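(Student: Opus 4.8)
The plan is to deduce all of (a)--(d) from Proposition \ref{p31} via a single structural observation about how the operation $F\mapsto\overline{F}^{(2,3)}$ interacts with coordinate changes, together with the fact that $\pi$ is an involution. The one computation I would do by hand is the following claim: if $\tilde{F}=(\psi_1\times\psi_2)\circ F\circ(\phi_1\times\phi_2)$ for bijections $\phi_1,\phi_2,\psi_1,\psi_2$ between the relevant intervals, then
\[\overline{\tilde{F}}^{(2,3)}=(\phi_1^{-1}\times\phi_2^{-1})\circ\overline{F}^{(2,3)}\circ(\psi_1^{-1}\times\phi_1\times\phi_2).\]
This follows directly from the definition \eqref{Fbar}: the second and third components of $\overline{\tilde{F}}(a,b,c)$ equal $\tilde{F}^{-1}(a,\tilde{F}^{(2)}(b,c))$, and since $\tilde{F}^{(2)}(b,c)=\psi_2(F^{(2)}(\phi_1(b),\phi_2(c)))$ while $\tilde{F}^{-1}=(\phi_1^{-1}\times\phi_2^{-1})\circ F^{-1}\circ(\psi_1^{-1}\times\psi_2^{-1})$, the factor $\psi_2$ is cancelled by $\psi_2^{-1}$, leaving exactly $(\phi_1^{-1}\times\phi_2^{-1})\big(F^{-1}(\psi_1^{-1}(a),F^{(2)}(\phi_1(b),\phi_2(c)))\big)=(\phi_1^{-1}\times\phi_2^{-1})\big(\overline{F}^{(2,3)}(\psi_1^{-1}(a),\phi_1(b),\phi_2(c))\big)$. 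The disappearance of $\psi_2$ is consistent with the remark after \eqref{Fab} that the second output of $F$ is only determined up to scale.

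\textbf{Applying the observation.} Recall that for each $(\a,\b)$ we have $R_{(\a,\b)}=\overline{F_{(\a,\b)}}^{(2,3)}$. In each part I would take $F=F_{(\a,\b)}$ and let $\tilde{F}$ be precisely the composite bijection that is barred on the right-hand side of the corollary, namely $F_{\Gam,\tBeta}$ in (a), $\pi\circ F_{\Gam,\tBeta}^{-1}$ in (b), $\pi\circ F_{\tBeta,\tBeta}$ in (c)(i), $\pi\circ F_{\tBeta,\tBeta}\circ\pi$ in (c)(ii), and $F_{\tBeta,\tBeta}\circ\pi$ in (d). The only extra ingredient is that whenever Proposition \ref{p31} presents an identity of the shape $\pi\circ(\cdots)=F_{\mathrm{basic}}$ or $(\cdots)\circ\pi=F_{\mathrm{basic}}$, I left- or right-multiply by $\pi$ and use $\pi\circ\pi=I_d\times I_d$ to rewrite it in the form $\tilde{F}=(\psi_1\times\psi_2)\circ F_{(\a,\b)}\circ(\phi_1\times\phi_2)$. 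The key point is that I never need to bar a transposition directly: the transposition is simply absorbed into $\tilde{F}$ before the observation is applied (part (a) requires no such rearrangement at all).

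\textbf{Finishing each case.} With $\tilde{F}$ so arranged, the observation gives $\overline{\tilde{F}}^{(2,3)}=(\phi_1^{-1}\times\phi_2^{-1})\circ R_{(\a,\b)}\circ(\psi_1^{-1}\times\phi_1\times\phi_2)$, and left-composing with $(\phi_1\times\phi_2)$ yields $R_{(\a,\b)}\circ(\psi_1^{-1}\times\phi_1\times\phi_2)=(\phi_1\times\phi_2)\circ\overline{\tilde{F}}^{(2,3)}$, which is exactly the stated identity once the explicit $\phi_i$ and $\psi_1$ are read off (e.g.\ $\psi_1=I_d$, $\phi_2=I\circ Q^{-1}\circ I$ in (b), so that $(\phi_1\times\phi_2)=I_d\times(I\circ Q^{-1}\circ I)$ appears as the prefactor). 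Here one repeatedly uses that the relevant compositions of $I$, $Q$ and $Q^{-1}$ collapse, such as $(I\circ Q^{-1})\circ(Q\circ I)=I_d$. Part (d) carries one additional step: the observation produces $R_{(1,-1)}$ precomposed with $Q^{-1}$ in its first argument, which I rewrite as $\tilde{R}_{(1,-1)}$ using \eqref{tildereq}, thereby matching the corollary's left-hand side.

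\textbf{Main obstacle.} All of the substance sits in the preliminary observation of the first paragraph, and the error-prone part is the asymmetric bookkeeping of inverses and positions: the output map $\psi_1$ reappears as $\psi_1^{-1}$ acting on the \emph{first} argument of $R_{(\a,\b)}$, each input map $\phi_i$ appears both as $\phi_i$ on the $i$-th argument and as $\phi_i^{-1}$ on the output, and $\psi_2$ vanishes entirely. Beyond this, the remaining care is purely in checking that each change of variables maps the stated domains and co-domains onto one another (using that $I$ interchanges $(0,1)$ and $(1,\infty)$, and that $,Q:(0,1)\to(0,\infty)$), so that every composition above is well defined on the indicated set. Once the observation and these domain checks are in hand, each of (a)--(d) reduces to a mechanical substitution followed by a cancellation of coordinate maps.
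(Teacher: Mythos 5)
Your proposal is correct and takes essentially the same route as the paper: Corollary \ref{c32} is presented there as a straightforward consequence of Proposition \ref{p31} together with the relation \eqref{tildereq} for part (d), which is exactly your deduction. Your explicit conjugation lemma for $\overline{F}^{(2,3)}$ (including the cancellation of $\psi_2$ and the reappearance of $\psi_1^{-1}$ in the first argument) is just the natural formalization of the step the paper leaves implicit, and your case-by-case bookkeeping, including absorbing the transpositions $\pi$ into $\tilde{F}$ before barring, checks out.
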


\begin{rem}\label{r11rem} From the relation between $F_{(1,1)}$ and $F_{(-1,1)}$ that was applied in Remark \ref{f11rem}, one deduces that $R_{(1,1)}=\pi\circ R_{(-1,1)} \circ (I\circ Q^{-1}\times \pi)$. Hence, characterising solutions of \eqref{Rinv} for either $R_{(1,1)}$ or $R_{(-1,1)}$ is enough to characterise solutions of \eqref{Rinv} for the other.
\end{rem}

\subsection{Zero-temperature limits}

To transfer the corresponding changes of scale to the zero-temperature regime, we first observe that, by a simple calculation,
\[I_d^\z(x)=x,\qquad I^{\z}(x)= -x, \qquad (Q^{-1})^{\z}(x)=-\min \{x,0\},\]
where we define the zero-temperature limit similarly to \eqref{ztl} in each case. Importantly, whilst the first and second of these maps are bijections on $\mathbb{R}$ (or indeed between suitable subsets of $\mathbb{R}$), the third map loses injectivity on the positive real axis. Moreover, for the third map, the image of the map is not an open interval, but a closed one, since $(Q^{-1})^{\z}(\R)= [0,\infty)$. In particular, $(Q^{-1})^{\z}([0,\infty))= \{0\}$, which has as a consequence that the image under $(Q^{-1})^{\z}$ of an absolutely continuous probability measure on $\R$ may have an atom. More precisely, one can check that $S_{\epsilon}^{-1} \circ (Q^{-1}) \circ S_{\epsilon} (0) =\epsilon \log 2$, and further
\[ S_{\epsilon}^{-1} \circ (Q^{-1}) \circ S_{\epsilon} ( (0,\infty))= (0, \epsilon \log 2),  \qquad S_{\epsilon}^{-1} \circ (Q^{-1}) \circ S_{\epsilon} ( (-\infty,0))= (\epsilon \log 2, \infty).\]
Hence, even though $0$ is not in the image of $ S_{\epsilon}^{-1} \circ (Q^{-1}) \circ S_{\epsilon} $ for any $\epsilon >0$, the image of $(Q^{-1})^{\z}$ includes $0$ and the inverse image of $\{0\}$ is $[0,\infty)$. As we will describe in Subsection \ref{s42}, this will have implications when it comes to characterising stationary solutions of the random polymer models of interest. Nonetheless, since the operation of composition is consistent with taking zero-temperature limits (in the sense that, when all the relevant limits are defined, $(f\circ g)^\z=f^\z\circ g^\z$), we readily read off the following result from Corollary \ref{c32}. Of course, it is also possible to check the relations by direct computation.

\begin{prop}\label{p34}
(a) The map $R^{\z}_{(0,1)}:\R^3 \to \R^2$ satisfies
\[R^\z_{(0,1)} \circ (I^\z \times I^\z \times I^\z) =(I^\z \times I^\z) \circ \overline{F_{\E,\AL}}^{(2,3)}.\]
(b) The map $R^\z_{(1,0)} : \R \times \R \times (-\infty,0] \to  \R \times (-\infty,0]$ satisfies
\begin{align*}
&R^\z_{(1,0)} \circ (I^\z_d \times I^\z_d \times (I^\z \circ (Q^{-1})^\z \circ I^\z))\\
&\qquad= (I^\z_d \times (I^\z \circ (Q^{-1})^\z \circ I^\z)) \circ \overline{ (\pi  \circ  F_{\E,\AL}^{-1} )}^{(2,3)}.
\end{align*}
(c) The map $R^{\z}_{(1,1)} : \R \times \R \times (-\infty,0]  \to \R \times (-\infty,0] $ satisfies
\[R^\z_{(1,1)} \circ ( I^\z\times  I^\z_d  \times (I^\z \circ (Q^{-1})^\z)) = (  I^\z_d  \times (I^\z \circ (Q^{-1})^\z)) \circ  \overline{(\pi \circ F_{\AL,\AL} \circ \pi) }^{(2,3)}.\]
(d) The map $\tilde{R}^{\z}_{(1,-1)}: \R \times [0,\infty) \times (-\infty,0] \to [0,\infty) \times (-\infty,0]$ satisfies
\begin{align*}
&\tilde{R}^\z_{(1,-1)} \circ (I^\z_d \times ((Q^{-1})^\z \circ I^\z) \times (I^\z \circ (Q^{-1})^\z \circ I^\z) )\\
&\qquad= (((Q^{-1})^\z \circ I^\z) \times (I^\z \circ (Q^{-1})^\z \circ I^\z)) \circ  \overline{ (F_{\AL,\AL} \circ \pi)}^{(2,3)}.
\end{align*}
\end{prop}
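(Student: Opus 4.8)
The plan is to obtain each of the four identities by applying the zero-temperature limit operation \eqref{ztl} directly to the corresponding part of Corollary \ref{c32}, exploiting the compatibility of the limit with composition, namely $(f\circ g)^\z=f^\z\circ g^\z$ whenever the relevant limits exist. Starting from the identity $R_{(0,1)}\circ(I\times I\times I)=(I\times I)\circ\overline{F_{\Gam,\tBeta}}^{(2,3)}$ of Corollary \ref{c32}(a), I would take the $\z$-limit of both sides and distribute it across the compositions to get
\[
R^\z_{(0,1)}\circ(I^\z\times I^\z\times I^\z)=(I^\z\times I^\z)\circ\big(\overline{F_{\Gam,\tBeta}}^{(2,3)}\big)^\z,
\]
and proceed analogously for the other three cases, where for instance the block $I\circ Q^{-1}\circ I$ passes to $I^\z\circ(Q^{-1})^\z\circ I^\z$. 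The left-hand sides are then exactly the maps in the statement, once one recalls that $R^\z_{(0,1)}$, $R^\z_{(1,0)}$, $R^\z_{(1,1)}$ and $\tilde R^\z_{(1,-1)}$ are by definition the $\z$-limits of $R_{(0,1)}$, $R_{(1,0)}$, $R_{(1,1)}$ and $\tilde R_{(1,-1)}$.

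The ingredients needed to identify the right-hand sides are already in place. The scalar rescaling maps have the limits $I_d^\z(x)=x$, $I^\z(x)=-x$ and $(Q^{-1})^\z(x)=-\min\{x,0\}$ recorded just above the statement; the basic bijections satisfy $F_{\Gam,\tBeta}^\z=F_{\E,\AL}$ and $F_{\tBeta,\tBeta}^\z=F_{\AL,\AL}$ as noted in the introduction; and $\pi^\z=\pi$ trivially. It then remains to justify that the bar construction commutes with the limit, i.e.\ that $\big(\overline{F}\big)^\z=\overline{F^\z}$ for the five maps $F\in\{F_{\Gam,\tBeta},\,\pi\circ F_{\Gam,\tBeta}^{-1},\,\pi\circ F_{\tBeta,\tBeta},\,\pi\circ F_{\tBeta,\tBeta}\circ\pi,\,F_{\tBeta,\tBeta}\circ\pi\}$ appearing inside the bars. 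Granting this, substituting the limits above into the right-hand sides produces exactly $\overline{F_{\E,\AL}}^{(2,3)}$, $\overline{(\pi\circ F_{\E,\AL}^{-1})}^{(2,3)}$, $\overline{(\pi\circ F_{\AL,\AL}\circ\pi)}^{(2,3)}$ and $\overline{(F_{\AL,\AL}\circ\pi)}^{(2,3)}$, closing the four cases.

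The one genuinely non-formal point, and the main obstacle, is the interchange $\big(\overline{F}\big)^\z=\overline{F^\z}$. Since $\overline{F}$ is assembled from $F$, $F^{-1}$ and coordinate projections via \eqref{Fbar}, and since composition and projection are compatible with the $\z$-limit, this reduces to verifying $\big(F^{-1}\big)^\z=\big(F^\z\big)^{-1}$ for the bijections at hand; applying the compositional rule once more, it suffices to establish this for the two base maps $F_{\Gam,\tBeta}$ and $F_{\tBeta,\tBeta}$ (the conjugating factors $\pi$ being their own inverses). For $F_{\tBeta,\tBeta}$ this is immediate, because $F_{\tBeta,\tBeta}$ is an involution and its limit $F_{\AL,\AL}$ is likewise an involution, so both sides equal $F_{\AL,\AL}$. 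The only case requiring an actual computation is $F_{\Gam,\tBeta}$, where one checks from $F_{\Gam,\tBeta}^{-1}(x,y)=\big(\tfrac{xy}{1+y},\tfrac{x}{1+y}\big)$ that its $\z$-limit is $F_{\E,\AL}^{-1}(x,y)=\big(x+y-(0\wedge y),\,x-(0\wedge y)\big)$, using the asymptotic $\varepsilon\log(1+e^{-y/\varepsilon})\to-(0\wedge y)$.

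Two caveats must be handled to make the interchange rigorous. First, one should confirm that every limit in \eqref{ztl} is defined on the stated domains ($\R^3$, $\R\times\R\times(-\infty,0)$, and so on); the restriction of the third coordinate to $(-\infty,0)$ in (b)--(d) is precisely what guarantees the relevant compositions converge, since $(Q^{-1})^\z$ degenerates on $[0,\infty)$. Second, because some rescaling maps (notably $(Q^{-1})^\z$) are no longer injective, the identities are assertions about maps verified pointwise, not about bijective changes of variable; this is harmless, as each claim is merely an equality of functions. Given the low dimension, a fully safe alternative that bypasses the interchange entirely is to verify each identity by direct computation, substituting the explicit piecewise-linear formulas for $R^\z_{(\a,\b)}$, $F_{\E,\AL}$, $F_{\AL,\AL}$ and their bar-extensions and comparing the resulting $\min$/$\max$ expressions, as is signposted by the observation that the relations can also be checked directly.
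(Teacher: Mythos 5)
Your proposal is correct and takes essentially the same route as the paper: the paper likewise reads Proposition \ref{p34} off from Corollary \ref{c32} via the compatibility $(f\circ g)^\z=f^\z\circ g^\z$, the scalar limits $I_d^\z$, $I^\z$, $(Q^{-1})^\z$, and the identifications $F_{\Gam,\tBeta}^\z=F_{\E,\AL}$, $F_{\tBeta,\tBeta}^\z=F_{\AL,\AL}$, remarking that the relations can alternatively be checked by direct computation. Your explicit verification that the bar construction commutes with the limit (reducing to $(F^{-1})^\z=(F^\z)^{-1}$ for the two base bijections) makes rigorous a step the paper leaves implicit, but it is the same argument.
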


\begin{rem}
Taking zero-temperature limits of Proposition \ref{p31}(a), we find that the map ${F}^\z_{(0,1)}:\mathbb{R}^2\rightarrow\mathbb{R}^2$ defined at \eqref{fz01} satisfies
\begin{equation}\label{rrr2}
(I^\z \times I^\z_d) \circ F^\z_{(0,1)}  \circ (I^\z \times I^\z) =F_{\E,\AL}.
\end{equation}
\end{rem}

\begin{rem}\label{Rtilderem}
Noting that $(Q^{-1})^\z (\mathbb{R})=[0,\infty)$, and that $R_{(1,-1)}^\z$ is defined on $[0,\infty)\times [0,\infty)\times (-\infty,0]$, one can deduce from \eqref{tildereq} that, on $\mathbb{R}\times [0,\infty)\times (-\infty,0]$,
\[\tilde{R}^\z_{(1,-1)}\equiv R^\z_{(1,-1)} \circ ((Q^{-1})^\z\times I_d^\z \times I_d^\z),\]
where $R^\z_{(1,-1)}$ is the zero-temperature limit of the map $R_{(1,-1)}$. However, unlike in the positive-temperature case (see \eqref{tildereq}), the function linking $\tilde{R}^\z_{(1,-1)}$ and $R^\z_{(1,-1)}$ is non-bijective.
\end{rem}

\section{Detailed balance solutions and stationary measures}\label{sec4}

In the probability literature, the following kind of problem is classical. Let $X$ and $Y$ be independent $(0,\infty)$- (or $\mathbb{R}$-)valued random variables such that the distribution of $(X,Y)$ is non-trivial. Given a bijection $F$ from $(0,\infty)^2$ to itself (or $\mathbb{R}^2$ to itself), define
\[(U,V):=F(X,Y).\]
For what distributions of $X$ and $Y$ are $U$ and $V$ independent? Of course, one might equivalently ask: what are the non-trivial solutions of the detailed balance condition for $F$? Work on such questions originate in the work of Kac, who considered the case where $F:\mathbb{R}^2\rightarrow \mathbb{R}^2$ is given by $F(x,y)=(x+y,x-y)$, which turns out to characterise the normal distribution \cite{Kac}. (See also related discussion in \cite[Subsection 8.2]{CS2}.) In Subsection \ref{s41}, we recall how known results concerning the solutions of the detailed balance condition for $F_{\Gam,\tBeta}$ and $F_{\tBeta,\tBeta}$ can be used to characterise solutions of the detailed balance conditions for $F_{(0,1)}$, $F_{(1,0)}$, $F_{(-1,1)}$ (or $F_{(1,1)}$) and $F_{(1,-1)}$, and also stationary measures for the corresponding $R_{(\a,\b)}$. Moreover, in Subsection \ref{s42}, we explain how solutions of the detailed balance condition for $F_{\E,\AL}$ and $F_{\AL,\AL}$ yield stationary measures for  $R^\z_{(0,1)}$, $R^\z_{(1,0)}$, $R^\z_{(1,1)}$ and $\tilde{R}^\z_{(1,-1)}$, although we do not provide a complete characterisation of stationary measures in these zero-temperature cases. We recall that definitions  of the distributions appearing in our results can be found in Appendix \ref{appa}.

\subsection{Positive-temperature versions}\label{s41}

We start by setting out the solutions of the detailed balance condition for $F_{\Gam,\tBeta}$ and $F_{\tBeta,\tBeta}$, which are direct consequences of classical results.

\begin{prop}[{\cite{L,SW}}]\label{gb}
(a) The only non-trivial solutions of the detailed balance condition for the map $F_{\Gam,\tBeta}:(0,\infty)^2\rightarrow(0,\infty)^2$ are given by
\[\mu = \Gam(\rho,\tau),\qquad \nu = \Gam(\sigma,\tau),\qquad\tilde{\mu}= \Gam(\rho+\sigma,\tau),\qquad \tilde{\nu} = \tBeta(\rho,\sigma),\]
where $\rho,\sigma,\tau>0$.\\
(b) The only non-trivial solutions of the detailed balance condition for $F_{\tBeta,\tBeta}:(0,\infty)^2\rightarrow(0,\infty)^2$ are given by
\[\mu = \tBeta(\rho,\sigma),\qquad \nu = \tBeta(\rho+\sigma,\tau),\qquad \tilde{\mu} = \tBeta(\tau,\sigma),\qquad \tilde{\nu} = \tBeta(\sigma+\tau,\rho),\]
where $\rho,\sigma,\tau>0$.
\end{prop}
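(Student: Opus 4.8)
The plan is to reduce the claim to two classical characterization theorems, connected by the change of variables $F_{\Gam,\tBeta}^{-1}$ and $F_{\tBeta,\tBeta}$ respectively. For part (a), I would recall that $F_{\Gam,\tBeta}(x,y)=(x+y,x/y)$, so that writing $(U,V)=F_{\Gam,\tBeta}(X,Y)$ with $X,Y$ independent, the detailed balance condition $F_{\Gam,\tBeta}(\mu\times\nu)=\tilde\mu\times\tilde\nu$ asks precisely that $U=X+Y$ and $V=X/Y$ be independent. This is exactly the setting of the Lukacs characterization theorem: for independent positive random variables $X,Y$, the sum $X+Y$ is independent of the ratio $X/Y$ if and only if $X$ and $Y$ are gamma distributed with a common scale parameter $\tau$. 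The gamma shapes $\rho,\sigma$ are free, and a direct computation of the law of $(X+Y,X/Y)$ identifies $\tilde\mu=\Gam(\rho+\sigma,\tau)$ (additivity of independent gammas on the sum) and $\tilde\nu=\tBeta(\rho,\sigma)$ (the ratio of independent gammas with common scale is beta prime). So the proof of (a) is: cite Lukacs for the necessity and uniqueness, then carry out the two short distributional computations for the identification of $\tilde\mu,\tilde\nu$.

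For part (b), the involution $F_{\tBeta,\tBeta}(x,y)=\bigl((1+x)/y,\,(1+x+y)/(xy)\bigr)$ is governed by the analogous Seshadri–Wesolowski (or Matsumoto–Yor type) characterization of the beta prime distribution under this specific rational map. Here I would invoke the result of \cite{SW} directly: the only non-trivial independent pairs $(X,Y)$ for which $F_{\tBeta,\tBeta}(X,Y)$ again has independent coordinates are the stated beta prime pairs, with the parameter matching $\mu=\tBeta(\rho,\sigma)$, $\nu=\tBeta(\rho+\sigma,\tau)$ forcing $\tilde\mu=\tBeta(\tau,\sigma)$ and $\tilde\nu=\tBeta(\sigma+\tau,\rho)$. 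To check that the stated quadruplet genuinely solves the detailed balance condition (as opposed to merely citing uniqueness), I would verify $F_{\tBeta,\tBeta}(\mu\times\nu)=\tilde\mu\times\tilde\nu$ by a Jacobian computation: write down the joint density of $(X,Y)$ as a product of two beta prime densities, compute the inverse map and its Jacobian (using that $F_{\tBeta,\tBeta}$ is an involution, so $F_{\tBeta,\tBeta}^{-1}=F_{\tBeta,\tBeta}$), and confirm that the pushed-forward density factorizes into the product of the two claimed beta prime densities.

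The main obstacle is part (b). Unlike the Lukacs setting of (a), where the relevant map is the familiar sum/ratio transformation with a well-known, easily citable characterization, the map $F_{\tBeta,\tBeta}$ is a nonlinear involution whose characterizing property is less standard. The crux is to match the three free parameters $\rho,\sigma,\tau$ correctly across the four distributions and to verify that the shape parameters propagate as claimed; an honest verification requires tracking how the densities transform under a map whose Jacobian is not constant. The cleanest route is the Jacobian computation for existence (confirming the stated solution), combined with a direct appeal to \cite{SW} for uniqueness among non-trivial solutions. I would be careful to note that the ``non-trivial'' qualifier excludes degenerate (e.g.\ point-mass) solutions, since the characterization theorems require the laws to have densities of the requisite form.
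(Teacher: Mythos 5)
Your proposal is correct and follows essentially the same route as the paper: part (a) is the Lukacs characterization from \cite{L} together with identification of the push-forward laws $\tilde{\mu}=\Gam(\rho+\sigma,\tau)$ and $\tilde{\nu}=\tBeta(\rho,\sigma)$, and part (b) rests on the Seshadri--Wesolowski characterization \cite{SW} for uniqueness, with a short computation supplying the existence direction. The one adjustment you would need: \cite{SW} characterizes the beta distribution on $(0,1)$, not the beta prime law under $F_{\tBeta,\tBeta}$ itself, so your ``direct'' appeal to it in (b) must be mediated by the bijection $x\mapsto x/(1+x)$ from $(0,\infty)$ to $(0,1)$ --- precisely the change of variables the paper performs.
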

\begin{proof} The proof of the form of $\mu$ and $\nu$ in (a) is contained in \cite{L}. By considering the characteristic function of $F_{\tBeta,\tBeta}(X,Y)$ where $(X,Y)\sim{\mu}\times {\nu}$ as on the final page of \cite{L}, the form of the distributions $\tilde{\mu}$ and $\tilde{\nu}$ is readily deduced from this. As for part (b), on transforming $(0,\infty)$ to $(0,1)$ by the bijection $x\mapsto x/(1+x)$, this is equivalent to the characterisation of the beta distribution given in \cite{SW}.
\end{proof}

Combining the above result with Proposition \ref{p31} (noting that all the transformations in the statement of the latter result are bijections) readily yields the following. In checking this, it is useful to note the following relations between distributions:
\[I(\mathrm{Gam}(\rho,\sigma))=\mathrm{IG}(\rho,\sigma),\qquad I(\mathrm{Be}(\rho,\sigma))=\mathrm{IB}(\rho,\sigma),\]
\[I\circ Q(\mathrm{Be}(\rho,\sigma))=\tBeta(\rho,\sigma),\qquad I(\tBeta(\rho,\sigma))=\tBeta(\sigma,\rho).\]
(We do not need to use it here, but it also informative to note that $\mathrm{IB}(\rho,\sigma)=1+\tBeta(\sigma,\rho)$.)

\begin{thm}\label{t42}
(a) The only non-trivial solutions of the detailed balance condition for $F_{(0,1)}:(0,\infty)^2\rightarrow(0,\infty)^2$ are given by
\[\mu = \mathrm{IG}(\rho,\tau),\qquad \nu = \mathrm{IG}(\sigma,\tau),\qquad\tilde{\mu}= \mathrm{IG}(\rho+\sigma,\tau),\qquad \tilde{\nu} = \tBeta(\rho,\sigma).\]
(b) The only non-trivial solutions of the detailed balance condition for $F_{(1,0)}:(0,\infty)\times(1,\infty)\rightarrow(0,\infty)^2$ are given by
\[{\mu}= \Gam(\rho+\sigma,\tau),\qquad {\nu} = \mathrm{IB}(\rho,\sigma),\qquad\tilde{\mu} = \Gam(\sigma,\tau),\qquad \tilde{\nu} = \mathrm{IG}(\rho,\tau).\]
(c)(i) The only non-trivial solutions of the detailed balance condition for $F_{(-1,1)}:(1,\infty)\times(0,\infty)\rightarrow(1,\infty)\times(0,\infty)$ are given by
\[\mu = \mathrm{IB}(\sigma,\rho),\qquad \nu = \tBeta(\rho+\sigma,\tau),\qquad \tilde{\mu} = \mathrm{IB}(\sigma+\tau,\rho),\qquad \tilde{\nu} = \tBeta(\sigma,\tau).\]
(ii) The only non-trivial solutions of the detailed balance condition for $F_{(1,1)}:(0,\infty)\times(1,\infty)\rightarrow(0,\infty)^2$ are given by
\[\mu = \tBeta(\rho+\sigma,\tau),\qquad \nu =\mathrm{IB}(\sigma,\rho),\qquad \tilde{\mu} = \tBeta(\rho,\sigma+\tau),\qquad \tilde{\nu} =\tBeta(\tau,\sigma).\]
(d)  The only non-trivial solutions of the detailed balance condition for $F_{(1,-1)}:(0,1)\times(1,\infty)\rightarrow(0,1)\times(1,\infty)$ are given by
\[\mu = \mathrm{Be}(\rho+\sigma,\tau),\qquad \nu =\mathrm{IB}(\rho,\sigma),\qquad \tilde{\mu} = \mathrm{Be}(\sigma,\tau),\qquad \tilde{\nu} = \mathrm{IB}(\rho,\sigma+\tau).\]
\end{thm}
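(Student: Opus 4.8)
The plan is to deduce Theorem \ref{t42} from Proposition \ref{gb}, which already supplies the complete list of non-trivial detailed balance solutions for the two canonical bijections $F_{\Gam,\tBeta}$ and $F_{\tBeta,\tBeta}$, by transporting those solutions along the explicit changes of variables recorded in Proposition \ref{p31}. The conceptual point is that the detailed balance condition \eqref{db} is preserved, in a bijective manner, under three elementary operations on a bijection $F$: (i) pre- and post-composition by product maps; (ii) pre- or post-composition by the transposition $\pi$; and (iii) passage to the inverse $F^{-1}$. Since every map appearing in Proposition \ref{p31} is built from these operations, the characterisation will transfer verbatim.

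First I would record the transfer lemma making this precise. For (i), if $F' = (b_1 \times b_2) \circ F \circ (a_1 \times a_2)$ with all the $a_i, b_j$ bijections, then $(\mu, \nu, \tilde\mu, \tilde\nu)$ solves detailed balance for $F$ if and only if $(a_1^{-1}(\mu), a_2^{-1}(\nu), b_1(\tilde\mu), b_2(\tilde\nu))$ solves it for $F'$; this is immediate from the push-forward identity $(a_1 \times a_2)(a_1^{-1}(\mu) \times a_2^{-1}(\nu)) = \mu \times \nu$ and the fact that product maps send product measures to product measures. For (ii), the transposition $\pi$ sends $\mu \times \nu$ to $\nu \times \mu$, so pre- or post-composing with $\pi$ simply swaps the relevant input or output pair of measures. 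For (iii), since $F(\mu \times \nu) = \tilde\mu \times \tilde\nu$ is equivalent to $F^{-1}(\tilde\mu \times \tilde\nu) = \mu \times \nu$, the solution set for $F^{-1}$ is obtained from that of $F$ by interchanging the input pair $(\mu, \nu)$ with the output pair $(\tilde\mu, \tilde\nu)$.

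Next I would apply the lemma case by case. For (a), relation (a) of Proposition \ref{p31} reads $(I \times I_d) \circ F_{(0,1)} \circ (I \times I) = F_{\Gam,\tBeta}$; since $I$ and $I_d$ are involutions, the lemma converts the solution $(\Gam(\rho,\tau), \Gam(\sigma,\tau), \Gam(\rho+\sigma,\tau), \tBeta(\rho,\sigma))$ of Proposition \ref{gb}(a) into $(I(\Gam(\rho,\tau)), I(\Gam(\sigma,\tau)), I(\Gam(\rho+\sigma,\tau)), \tBeta(\rho,\sigma))$, which by the identity $I(\Gam(\rho,\tau)) = \mathrm{IG}(\rho,\tau)$ is exactly the stated solution. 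Cases (b)–(d) proceed identically, using the appropriate relation from Proposition \ref{p31} (invoking operations (ii) and (iii) where $\pi$ and $F_{\Gam,\tBeta}^{-1}$ appear), Proposition \ref{gb}(a) for (b) and Proposition \ref{gb}(b) for (c) and (d), and the distributional identities $I(\mathrm{Be}) = \mathrm{IB}$, $I \circ Q(\mathrm{Be}) = \tBeta$, and $I(\tBeta(\rho,\sigma)) = \tBeta(\sigma,\rho)$ listed before the statement to rewrite the transported measures in the named families. Crucially, because every map in Proposition \ref{p31} is a bijection and each operation (i)–(iii) is a bijection on solution sets that preserves non-triviality, the word ``only'' in Proposition \ref{gb} transfers to ``only'' in Theorem \ref{t42}.

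I expect the main obstacle to be purely the parameter bookkeeping rather than any conceptual difficulty: one must carefully track which of $\rho, \sigma, \tau$ ends up in which slot after composing several coordinate maps, transpositions, and an inversion, and confirm that the result lands on precisely the parametrisation displayed. This is most delicate in parts (c) and (d), where the change of variables is a longer composition (e.g.\ $I \circ Q^{-1} \circ I$) and where the interplay between $\tBeta$ and $\mathrm{IB}$ or $\mathrm{Be}$ means a single misplaced argument swap ($\tBeta(\rho,\sigma)$ versus $\tBeta(\sigma,\rho)$) would corrupt the final answer. A secondary point to verify is that the domains and codomains recorded in the table of Section \ref{sec2} are respected by each change of variables, so that the transported measures are genuinely supported where claimed.
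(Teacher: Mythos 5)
Your proposal is correct and is essentially the paper's own argument: the paper proves Theorem \ref{t42} precisely by combining Proposition \ref{gb} with the bijective changes of variables in Proposition \ref{p31} and the listed distributional identities, noting (as you do) that bijectivity of all the transformations is what transfers the ``only'' from the characterisation of $F_{\Gam,\tBeta}$ and $F_{\tBeta,\tBeta}$ to the polymer bijections. Your explicit transfer lemma for operations (i)--(iii) simply spells out the details the paper leaves as ``readily yields''.
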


To complete the discussion in the positive-temperature case we note that, by applying this result together with Proposition \ref{twothree}, one thus obtains a characterisation of the stationary measures of the maps $R_{(0,1)}$, $R_{(1,0)}$, $R_{(-1,1)}$ (or $R_{(1,1)}$) and $R_{(1,-1)}$, as desired. (We recall that this was already done in \cite{CN}, and point to Figure \ref{fig1} for a summary.)

\begin{rem}
We note that the names of the models are taken from what is $\tilde{\mu}$ in our notation, that is, the distribution of the random variables $X_{n,m}$. To help relate our results to the literature, we recall parameterizations of $\tilde{\mu}\times\mu\times\nu$ that are often used (see, for example, \cite{CN}, \cite{Thiery}).
\begin{itemize}
\item For the inverse-gamma model, $R_{(0,1)}$, with $\gamma > \lambda >0$, $\beta >0$:
\[\tilde{\mu}=\mathrm{IG}(\gamma,\beta),\qquad \mu=\mathrm{IG}(\gamma-\lambda, \beta),\qquad\nu=\mathrm{IG}(\lambda, \beta).\]
\item For the gamma model, $R_{(1,0)}$, with $\gamma,\lambda,\beta>0$:
\[\tilde{\mu}=\mathrm{Gam}(\gamma,\beta), \qquad \mu=\mathrm{Gam}(\gamma+\lambda,\beta),\qquad \nu=\mathrm{IB}(\la,\gamma).\]
\item For the inverse-beta model, $R_{(-1,1)}$, with $\gamma> \lambda >0$, $\beta >0$:
\[\tilde{\mu}=\mathrm{IB}(\gamma,\beta),\qquad \mu=\mathrm{IB}(\gamma-\lambda,\beta),\qquad \nu= \mathrm{Be}'(\beta+\gamma-\lambda,\la),\]
or $R_{(1,1)}$, with $\gamma> \lambda >0$, $\beta >0$:
\[\tilde{\mu}=\mathrm{Be}'(\beta,\gamma),\qquad \mu=\mathrm{Be}'(\beta+\lambda,\gamma-\lambda),\qquad \nu =\mathrm{IB}(\la,\beta).\]
\item For the beta model, $R_{(1,-1)}$, with $\gamma,\lambda,\beta>0$:
\[\tilde{\mu}=\mathrm{ Be}(\gamma,\beta),\qquad \mu=\mathrm{ Be}(\gamma+\lambda,\beta),\qquad \nu=\mathrm{IB}(\la,\gamma).\]
\end{itemize}
\end{rem}

\begin{rem} From the point of view of polymer models, as is explained in the discussion surrounding \cite[Theorem 1.2]{CN}, it is natural to consider $F_{(\a,\b)}$ with $\max\{\a,\b\}>0$. As mathematical objects, one might also ask what transpires when this restriction on parameters is dropped. In this direction, we note that
\[F_{(-\a,-\b)}=\left(I_d\times (-I_d)\right)\circ F_{(\a,\b)} \circ\left(I_d\times (-I_d)\right),\]
which further implies
\[R_{(-\a,-\b)}=\left(I_d\times (-I_d)\right)\circ R_{(\a,\b)} \circ\left(I_d\times I_d\times (-I_d)\right).\]
Thus Theorem \ref{t42} gives stationary measures with $\nu$ supported on negative values for $R_{(-1,-1)}$, $R_{(-1,0)}$ and $R_{(0,-1)}$, for example. As defined in this article, zero-temperature limits are not defined for functions supported on negative domains, and so this observation does not extend to the zero-temperature case.
\end{rem}

\subsection{Zero-temperature limits}\label{s42}

In the zero-temperature setting, the analogue of Proposition \ref{gb} is the following. We highlight that part (b) does not provide a complete characterisation of detailed balance solutions for $F_{\AL,\AL}$; see Section \ref{sec6} for a conjecture on this case.

\begin{prop}\label{gbz}
(a) The only non-trivial solutions of the detailed balance condition for $F_{\E,\AL}:\mathbb{R}^2\rightarrow\mathbb{R}^2$ are given by
\[\mu = \mathrm{sExp}(\rho,\tau),\qquad \nu = \mathrm{sExp}(\sigma,\tau),\qquad\tilde{\mu} = \mathrm{sExp}(\rho+\sigma,\tau),\qquad\tilde{\nu} = \AL(\rho,\sigma),\]
where $\rho,\sigma>0$ and $\tau\in \R$, and
\[\mu= \mathrm{ssGeo}(p,M,m),\quad \nu = \mathrm{ssGeo}(q,M,m),\quad \tilde{\mu}=\mathrm{ssGeo}(pq,M,m), \quad \tilde{\nu} =  \mathrm{sdAL}(p,q,m),\]
where $p,q\in(0,1)$, $M \in \Z$ and $m\in(0,\infty)$.\\
(b) Non-trivial solutions of the detailed balance condition for $F_{\AL,\AL}:\mathbb{R}^2\rightarrow\mathbb{R}^2$ are given by
\[\mu= \mathrm{AL}(\rho,\sigma),\qquad \nu=  \mathrm{AL}(\rho+\sigma,\tau),\qquad \tilde{\mu}= \mathrm{AL}(\tau,\sigma),\qquad \tilde{\nu}= \mathrm{AL}(\sigma+\tau,\rho),\]
where $\rho,\sigma,\tau > 0$, and
\[\mu=\mathrm{sdAL}(p,q,m),\qquad\nu=\mathrm{sdAL}(pq,r,m),\qquad\tilde{\mu}=\mathrm{sdAL}(r,q,m),\qquad \tilde{\nu}= \mathrm{sdAL}(qr,p,m),\]
where $0< p,q,r <1$ and $m\in(0,\infty)$.
\end{prop}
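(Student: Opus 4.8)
The plan is to separate existence from uniqueness, and in both parts to use the identities $F_{\E,\AL}=F_{\Gam,\tBeta}^\z$ and $F_{\AL,\AL}=F_{\tBeta,\tBeta}^\z$, together with the tropicalisations sending $\Gam$ to $\mathrm{sExp}$ and $\tBeta$ to $\AL$, to predict the relevant distributions as zero-temperature limits of the families in Proposition~\ref{gb}, and then to confirm them by a direct change-of-variables computation.

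For the existence half of part (a), I would verify \eqref{db} head on. Since $F_{\E,\AL}(x,y)=(x\wedge y,\,x-y)$ is piecewise linear with unit Jacobian, splitting $\R^2$ into $\{x\ge y\}$ and $\{x<y\}$ gives inverse branches $(u,v)\mapsto(u+v,u)$ and $(u,v)\mapsto(u,u-v)$ respectively. Substituting the product density of $\mu=\mathrm{sExp}(\rho,\tau)$ and $\nu=\mathrm{sExp}(\sigma,\tau)$ and transforming, the common location $\tau$ survives inside the minimum while the two rates add on the first output, and the two one-sided exponential pieces combine into a single asymmetric-Laplace density on the second; reading off the factors yields $F_{\E,\AL}(\mu\times\nu)=\mathrm{sExp}(\rho+\sigma,\tau)\times\AL(\rho,\sigma)$, as claimed. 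The lattice family is treated by the identical case split with sums in place of integrals, the key point being that the minimum of two geometrics with parameters $p$ and $q$ has tail $p^k q^k$, forcing the parameter $pq$ in $\tilde{\mu}$.

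The crux is uniqueness in part (a). Here I would appeal to the classical characterisation that, for independent non-degenerate $X$ and $Y$, the pair $\bigl(\min\{X,Y\},\,X-Y\bigr)$ is independent exactly when $X$ and $Y$ are exponential with a common left endpoint (the continuous case) or geometric on a common lattice (the lattice case). Because the two coordinates of $F_{\E,\AL}$ are precisely $\min\{X,Y\}$ and $X-Y$, the detailed balance condition \eqref{db} for $F_{\E,\AL}$ is word for word the independence hypothesis of that theorem, and its two conclusions are, after matching parameters, exactly the two solution families of the proposition, with the degenerate possibilities ruled out by non-triviality. This is the step I expect to be the genuine obstacle: it is non-elementary and I would import it rather than reprove it, and one must be careful to confirm that the continuous and lattice conclusions are mutually exclusive and jointly exhaustive, so that no mixed solution is missed.

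Finally, for part (b) only existence is claimed, so it suffices to check \eqref{db} for the involution $F_{\AL,\AL}$. The map is again piecewise linear, but the nested minima $(0\wedge x)-y$ and $(0\wedge x\wedge y)-x-y$ now require partitioning $\R^2$ into several polyhedral regions according to the signs and ordering of $x$ and $y$, on each of which the Jacobian is constant. I would feed the product density $\AL(\rho,\sigma)\times\AL(\rho+\sigma,\tau)$ through region by region and check agreement with $\AL(\tau,\sigma)\times\AL(\sigma+\tau,\rho)$ on the image; the parameter bookkeeping shadows that of the $F_{\tBeta,\tBeta}$ computation underlying Proposition~\ref{gb}(b) under the correspondence $\tBeta\leftrightarrow\AL$, and the $\mathrm{sdAL}$ family follows identically. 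The obstacle here is only organisational accounting, and, as anticipated in the discussion preceding Section~\ref{sec6}, this verification carries no matching uniqueness statement, which is exactly why part (b) is stated without the word ``only''.
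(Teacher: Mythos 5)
Your proposal is correct and follows essentially the same route as the paper: the paper's proof likewise imports the classical characterisation (its reference [Cr]) of the independence of $\min\{X,Y\}$ and $X-Y$ to pin down $\mu$ and $\nu$ in part (a), deduces $\tilde{\mu}$ and $\tilde{\nu}$ by direct computation, and disposes of part (b) by straightforward verification of the claimed identity, with no uniqueness asserted. Your additional detail (the branch decomposition of $F_{\E,\AL}$ and the region-by-region check for $F_{\AL,\AL}$) simply makes explicit what the paper calls straightforward.
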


\begin{proof} The forms of $\mu$ and $\nu$ in part (a) were established in \cite{Cr}. Deducing the forms of $\tilde{\mu}$ and $\tilde{\nu}$ is straightforward. For part (b), checking the claimed implication is also straightforward, and so we only sketch the proof of the continuous version of the result. Suppose $(U,V)\sim \tilde{\mu}\times\tilde{\nu}$ for $\tilde{\mu}= \mathrm{AL}(\tau,\sigma)$ and $\tilde{\nu}= \mathrm{AL}(\sigma+\tau,\rho)$. This joint distribution has density given by
\[\frac{1}{Z}\left(e^{-\tau u}\mathbf{1}_{(0,\infty)}(u)+e^{\sigma u}\mathbf{1}_{(-\infty,0)}(u)\right)\left(e^{-(\sigma+\tau) v}\mathbf{1}_{(0,\infty)}(v)+e^{\rho v}\mathbf{1}_{(-\infty,0)}(v)\right),\]
where $Z$ is a normalizing constant. Now, since the Jacobian of $F_{\AL,\AL}$ has absolute value 1 Lebesgue-almost-everywhere, the density of $(X,Y)=F_{\AL,\AL}^{-1}(U,V)$ is given by
\begin{align*}
&\frac{1}{Z}\left(e^{-\tau ((0\wedge x)-y)}\mathbf{1}_{(0,\infty)}((0\wedge x)-y)+e^{\sigma((0\wedge x)-y)}\mathbf{1}_{(-\infty,0)}((0\wedge x)-y)\right)\\
&\times \left(e^{-(\sigma+\tau)((0\wedge x\wedge y)-x-y)}\mathbf{1}_{(0,\infty)}((0\wedge x\wedge y)-x-y)+e^{\rho ((0\wedge x\wedge y)-x-y)}\mathbf{1}_{(-\infty,0)}((0\wedge x\wedge y)-x-y)\right).
\end{align*}
By breaking $\mathbb{R}^2$ into five regions (i.e.\ $x,y>0$; $x>0>y$; $x<0<y$; $x<y<0$; $y<x<0$), it is an elementary exercise to check that the above expression is equal to
\[\frac{1}{Z}\left(e^{-\rho x}\mathbf{1}_{(0,\infty)}(x)+e^{\sigma x}\mathbf{1}_{(-\infty,0)}(x)\right)\left(e^{-(\rho+\sigma) y}\mathbf{1}_{(0,\infty)}(y)+e^{\tau y}\mathbf{1}_{(-\infty,0)}(y)\right),\]
which confirms that if $\mu= \mathrm{AL}(\rho,\sigma)$ and $\nu=  \mathrm{AL}(\rho+\sigma,\tau)$, then $(\mu,\nu,\tilde\mu,\tilde\nu)$ is a solution of the detailed balance condition for $F_{\AL,\AL}$, as desired.
\end{proof}

To transfer this proposition to the maps $R^\z_{(0,1)}$, $R^\z_{(1,0)}$, $R^\z_{(1,1)}$ and $\tilde{R}^\z_{(1,-1)}$, we will apply the following conclusion concerning changes of variables. Given Proposition \ref{twothree}, the result is obvious in the case where the change of variables is bijective. The purpose of including it is to clarify the implication that holds when this is not the case.

\begin{lem}\label{lemlem}
Let $F: \R^2 \to \R^2$ be a bijection and suppose there are functions $f_1 : \R \to I_1$, $f_2 : \R \to I_2$, $\tilde{f}_1 : \R \to J_1$ such that
\[R\circ(\tilde{f}_1\times f_1\times f_2)= (f_1 \times f_2) \circ \overline{F}^{(2,3)}\]
on $\R^3$. If $(\mu', \nu',\tilde{\mu}' ,\tilde{\nu}')$ is a quadruplet of probability measures on $\mathbb{R}$ that satisfy the detailed balance condition for $F$, then \eqref{Rinv} holds with
\[\mu:=f_1(\mu'),\qquad \nu:=f_2(\nu'),\qquad \tilde{\mu}:=\tilde{f}_1(\tilde{\mu}').\]
Moreover, if $f_1,f_2$ and $\tilde{f}_1$ are all bijections, then the reverse implication is also true with $\tilde{\nu}':=F^{(2)}(f_1^{-1}(\mu)\times f_2^{-1}(\nu))$.
\end{lem}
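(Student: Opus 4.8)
The plan is to reduce everything to Proposition~\ref{twothree} applied to the bijection $F$ on $\R^2$ (taking all four abstract state spaces to be $\R$), and then to transport the resulting invariance identity through the given conjugacy by pushing forward measures. First I would record that, by the equivalence of (a) and (c) in Proposition~\ref{twothree}, the hypothesis that $(\mu',\nu',\tilde{\mu}',\tilde{\nu}')$ solves the detailed balance condition \eqref{db} for $F$ is precisely the statement
\[\overline{F}^{(2,3)}(\tilde{\mu}'\times\mu'\times\nu')=\mu'\times\nu'.\]
This is the only place the internal structure of $\overline{F}$ enters; from here the argument is purely functorial.

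For the forward implication I would push the measure $\tilde{\mu}'\times\mu'\times\nu'$ on $\R^3$ forward through both sides of the hypothesised identity $R\circ(\tilde{f}_1\times f_1\times f_2)=(f_1\times f_2)\circ\overline{F}^{(2,3)}$. The key elementary fact is that a product map sends a product measure to the product of the push-forwards of its marginals, and this holds with no injectivity assumption on $f_1,f_2,\tilde{f}_1$. Hence the left-hand side evaluates to $R(\tilde{f}_1(\tilde{\mu}')\times f_1(\mu')\times f_2(\nu'))=R(\tilde{\mu}\times\mu\times\nu)$, while the right-hand side evaluates to $(f_1\times f_2)(\overline{F}^{(2,3)}(\tilde{\mu}'\times\mu'\times\nu'))=(f_1\times f_2)(\mu'\times\nu')=\mu\times\nu$, using the displayed invariance above. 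Equating the two gives \eqref{Rinv}.

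For the reverse implication, under the assumption that $f_1,f_2,\tilde{f}_1$ are bijections I would invert the conjugacy to write $\overline{F}^{(2,3)}=(f_1^{-1}\times f_2^{-1})\circ R\circ(\tilde{f}_1\times f_1\times f_2)$, set $\mu'=f_1^{-1}(\mu)$, $\nu'=f_2^{-1}(\nu)$ and $\tilde{\mu}'=\tilde{f}_1^{-1}(\tilde{\mu})$, and run the same push-forward computation in reverse: starting from \eqref{Rinv} one obtains $\overline{F}^{(2,3)}(\tilde{\mu}'\times\mu'\times\nu')=(f_1^{-1}\times f_2^{-1})(\mu\times\nu)=\mu'\times\nu'$. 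Proposition~\ref{twothree}, now in the direction (a)$\Rightarrow$(c), then supplies a measure $\tilde{\nu}'$ with which $(\mu',\nu',\tilde{\mu}',\tilde{\nu}')$ solves detailed balance for $F$; reading $\tilde{\nu}'$ off as the second marginal of $F(\mu'\times\nu')$ yields the stated formula $\tilde{\nu}'=F^{(2)}(f_1^{-1}(\mu)\times f_2^{-1}(\nu))$.

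The computation is routine, so the one point worth emphasising is the asymmetry between the two directions. In the forward direction the push-forward through the product map $\tilde{f}_1\times f_1\times f_2$ produces a product measure regardless of whether the component maps are injective, which is exactly what lets the conclusion survive the degenerate changes of variables (such as $(Q^{-1})^\z(x)=-\min\{x,0\}$) arising in Proposition~\ref{p34}. When the maps fail to be injective one cannot invert the conjugacy and the information encoded in $\tilde{\nu}'$ is genuinely lost; this is precisely why the forward direction alone does not characterise all stationary measures of the zero-temperature maps, and why bijectivity is needed for the converse.
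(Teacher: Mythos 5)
Your proof is correct and follows essentially the same route as the paper's: both reduce the statement to Proposition \ref{twothree} and transport the resulting invariance identity through the conjugacy $R\circ(\tilde{f}_1\times f_1\times f_2)=(f_1\times f_2)\circ\overline{F}^{(2,3)}$ by pushing forward the product measure, with bijectivity invoked only for the converse. The only difference is presentational (the paper phrases the push-forward computation in terms of random variables $(X',U',V')$ rather than measures), so there is nothing substantive to add.
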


\begin{proof}
Suppose $(X',U',V')\sim  \tilde{\mu}'\times \mu'\times \nu'$, and set $X:=\tilde{f}_1(X')$, $U:= f_1(U')$, $V:=f_2(V')$, so that $(X,U,V)\sim\tilde{\mu}\times \mu\times \nu$. We then have that if $(U^{*},V^{*}):=R(X,U,V)$, then
\[(U^{*},V^{*})=(f_1\times f_2)\circ\overline{F}^{(2,3)}(X',U',V').\]
Since by Proposition \ref{twothree}, $\overline{F}^{(2,3)}(X',U',V')\sim \mu'\times\nu'$, we obtain that $(U^{*},V^{*})\sim\mu\times\nu$, as required to establish the first claim. Applying Proposition \ref{twothree} again, the second claim is obvious.
\end{proof}

From Propositions \ref{p34} and \ref{gbz}, Lemma \ref{lemlem} and the fact that $-\AL(\rho,\sigma)=\AL(\sigma,\rho)$, one may read off the following conclusion concerning the stationary measures of our zero-temperature models. Note that only for $R_{(0,1)}^\z$ do we have a complete characterisation of stationary measures, since it is only in this case that we have both a complete characterization of the basic bijection, and a bijective change of scale linking the basic bijection to the original dynamics. Whilst the solutions presented in parts (a)-(c) have essentially been presented previously (albeit without the shifted versions of the exponential/geometric distributions that appear in our result, see Remark \ref{trem}), part (d) is apparently new.

\begin{thm}\label{t45}
(a) The map $R^{\z}_{(0,1)}:\R^3 \to \R^2$ satisfies \eqref{Rinv} if and only if
\[\mu = -\mathrm{sExp}(\rho,\tau),\qquad \nu = -\mathrm{sExp}(\sigma,\tau),\qquad\tilde{\mu} = -\mathrm{sExp}(\rho+\sigma,\tau),\]
where $\rho,\sigma>0$ and $\tau\in \R$, or
\[\mu= -\mathrm{ssGeo}(p,M,m),\qquad \nu = -\mathrm{ssGeo}(q,M,m),\qquad \tilde{\mu}=-\mathrm{ssGeo}(pq,M,m),\]
where $p,q\in(0,1)$, $M \in \Z$ and $m\in(0,\infty)$.\\
(b) The map $R^\z_{(1,0)} : \R \times \R \times (-\infty,0] \to  \R \times (-\infty,0]$ satisfies \eqref{Rinv} if
\[{\mu} = \mathrm{sExp}(\rho+\sigma,\tau),\qquad{\nu}=\AL(\sigma,\rho)\wedge 0,\qquad \tilde{\mu} = \mathrm{sExp}(\sigma,\tau),\]
where $\rho,\sigma>0$ and $\tau\in \R$, or
\[{\mu}=\mathrm{ssGeo}(pq,M,m), \qquad {\nu} =  \mathrm{sdAL}(q,p,m)\wedge 0,\qquad \tilde{\mu}= \mathrm{ssGeo}(q,M,m),\]
where $p,q\in(0,1)$, $M \in \Z$ and $m\in(0,\infty)$.\\
(c) The map $R^{\z}_{(1,1)} : \R \times \R \times (-\infty,0]  \to \R \times (-\infty,0] $ satisfies \eqref{Rinv} if
\[\mu=  \mathrm{AL}(\rho+\sigma,\tau),\qquad \nu= \mathrm{AL}(\rho,\sigma)\wedge 0,\qquad\tilde{\mu}= \mathrm{AL}(\rho,\sigma+\tau),\]
where $\rho,\sigma,\tau > 0$, or
\[\mu=\mathrm{sdAL}(pq,r,m),\qquad \nu=\mathrm{sdAL}(p,q,m)\wedge 0,\qquad \tilde{\mu}= \mathrm{sdAL}(p,qr,m),\]
where $0< p,q,r <1$ and $m\in(0,\infty)$.\\
(d) The map $\tilde{R}^{\z}_{(1,-1)}: \R \times [0,\infty) \times (-\infty,0] \to [0,\infty) \times (-\infty,0]$ satisfies \eqref{Rinv} if
\[\mu= \mathrm{AL}(\rho+\sigma,\tau)\vee 0,
\qquad \nu=  \mathrm{AL}(\sigma,\rho)\wedge0,\qquad \tilde{\mu}= \mathrm{AL}(\tau,\sigma),\]
where $\rho,\sigma,\tau > 0$, or
\[\mu=\mathrm{sdAL}(pq,r,m)\vee 0,\qquad\nu=\mathrm{sdAL}(q,p,m)\wedge0,\qquad\tilde{\mu}=\mathrm{sdAL}(r,q,m),\]
where $0< p,q,r <1$ and $m\in(0,\infty)$.
\end{thm}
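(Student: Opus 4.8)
The plan is to read off all four parts by feeding the change-of-variables identities of Proposition \ref{p34} together with the detailed balance solutions of Proposition \ref{gbz} into Lemma \ref{lemlem}. Each part of Proposition \ref{p34} already presents the relevant map in the form $R\circ(\tilde{f}_1\times f_1\times f_2)=(f_1\times f_2)\circ\overline{F}^{(2,3)}$ demanded by Lemma \ref{lemlem}, where the basic bijection $F$ is $F_{\E,\AL}$ in part (a), $\pi\circ F_{\E,\AL}^{-1}$ in part (b), $\pi\circ F_{\AL,\AL}\circ\pi$ in part (c), and $F_{\AL,\AL}\circ\pi$ in part (d). So for each case I would: (i) transport the detailed balance quadruplet of $F_{\E,\AL}$ or $F_{\AL,\AL}$ from Proposition \ref{gbz} to the relevant conjugated/inverted bijection $F$; (ii) evaluate the explicit scaling maps $f_1,f_2,\tilde{f}_1$; and (iii) push the three input measures forward as prescribed by Lemma \ref{lemlem}.

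For step (i) I would record how detailed balance transforms under the two operations that occur here. If $(\mu,\nu,\tilde{\mu},\tilde{\nu})$ solves the detailed balance condition for a bijection $F$, then $(\tilde{\mu},\tilde{\nu},\mu,\nu)$ solves it for $F^{-1}$; pre-composing with $\pi$ (passing to $F\circ\pi$) swaps the two input measures, while post-composing (passing to $\pi\circ F$) swaps the two output measures. Applying these elementary swaps to the solutions in Proposition \ref{gbz} produces, in each case by a mere permutation of the $F_{\E,\AL}$ or $F_{\AL,\AL}$ data, a detailed balance quadruplet for the bijection appearing in Proposition \ref{p34}.

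For steps (ii)--(iii) I would use $I^\z_d(x)=x$, $I^\z(x)=-x$ and $(Q^{-1})^\z(x)=-\min\{x,0\}$ to reduce the composite maps to negation, the truncations $x\mapsto x\wedge 0$ and $x\mapsto x\vee 0$, and the projection $x\mapsto -(x\vee 0)$. Pushing the detailed balance measures forward through these, and invoking $-\AL(\rho,\sigma)=\AL(\sigma,\rho)$ together with its discrete analogue $-\mathrm{sdAL}(p,q,m)=\mathrm{sdAL}(q,p,m)$ (the exponential/geometric negations being recorded directly as $-\mathrm{sExp}$ and $-\mathrm{ssGeo}$), then yields exactly the listed families. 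For example, in part (d) one has $\tilde{f}_1=I^\z_d$, $f_1=(Q^{-1})^\z\circ I^\z$ and $f_2=I^\z\circ(Q^{-1})^\z\circ I^\z$, which evaluate to $f_1(x)=x\vee 0$ and $f_2(x)=-(x\vee 0)$; feeding the $F_{\AL,\AL}\circ\pi$ solution $(\mu',\nu',\tilde{\mu}')=(\AL(\rho+\sigma,\tau),\AL(\rho,\sigma),\AL(\tau,\sigma))$ through these gives $\mu=\AL(\rho+\sigma,\tau)\vee 0$, $\nu=\AL(\sigma,\rho)\wedge 0$ and $\tilde{\mu}=\AL(\tau,\sigma)$, as claimed. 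The other three parts, and the discrete versions throughout, follow by the identical computation.

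The only structural point — rather than a genuine obstacle — is the distinction between part (a) and parts (b)--(d). In part (a) all three scaling maps equal the bijection $I^\z$, so the converse half of Lemma \ref{lemlem} applies, and combined with the fact that Proposition \ref{gbz}(a) is a \emph{complete} characterisation for $F_{\E,\AL}$ this upgrades the conclusion to `if and only if'. In parts (b)--(d) the maps built from $(Q^{-1})^\z$ are non-injective (they collapse a half-line to $0$), so Lemma \ref{lemlem} yields only the `if' direction; this, rather than any computational difficulty, is why no complete characterisation is asserted there, a situation further compounded by the fact that Proposition \ref{gbz}(b) is itself only a sufficiency statement for $F_{\AL,\AL}$. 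All remaining work is the bookkeeping of steps (i)--(iii), which is routine once the effect of the non-injective truncations on the Laplace (and discrete-Laplace) negation symmetry is tracked carefully.
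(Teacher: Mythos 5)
Your proposal is correct and takes essentially the same route as the paper: Theorem \ref{t45} is obtained there precisely by combining Propositions \ref{p34} and \ref{gbz} with Lemma \ref{lemlem} and the symmetry $-\AL(\rho,\sigma)=\AL(\sigma,\rho)$, including the same observation that the `if and only if' in part (a) comes from the bijectivity of the scaling maps together with the completeness of Proposition \ref{gbz}(a), while parts (b)--(d) yield only sufficiency because $(Q^{-1})^\z$ is non-injective and Proposition \ref{gbz}(b) is itself only a sufficiency statement. Your explicit tracking of how detailed balance quadruplets transform under inversion and $\pi$-conjugation, and of the truncation maps, correctly supplies the bookkeeping that the paper leaves implicit.
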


\begin{rem} Applying \eqref{rrr2} and Proposition \ref{gbz}, we find that the only non-trivial solutions of the detailed balance condition for ${F}^\z_{(0,1)}$ are given by
\[\mu = -\mathrm{sExp}(\rho,\tau),\qquad -\nu = \mathrm{sExp}(\sigma,\tau),\qquad\tilde{\mu} = -\mathrm{sExp}(\rho+\sigma,\tau),\qquad\tilde{\nu} = \AL(\rho,\sigma),\]
where $\rho,\sigma>0$ and $\tau\in \R$, and
\[\mu= -\mathrm{ssGeo}(p,M,m),\quad \nu = -\mathrm{ssGeo}(q,M,m),\quad \tilde{\mu}=-\mathrm{ssGeo}(pq,M,m), \quad \tilde{\nu} =  \mathrm{sdAL}(p,q,m),\]
where $p,q\in(0,1)$, $M \in \Z$ and $m\in(0,\infty)$.
\end{rem}

\begin{rem}\label{trem} (a) For $R^{\z}_{(0,1)}$, the following stationary solutions are presented in the literature: for $\gamma>\la>0$,
\[\tilde{\mu}= -\mathrm{Exp}(\gamma),\qquad\mu= -\mathrm{Exp}(\gamma-\lambda), \qquad\nu= -\mathrm{Exp}(\lambda),\]
see \cite[(V.12)]{Thiery} (but note there is a typo in that the distributions of $U$ and $V$ are exchanged).\\
(b) For  $R^\z_{(1,0)}$, \cite[(V.13)]{Thiery} gives the stationary solutions: for $\beta, \la >0$,
\[\tilde{\mu}=\mathrm{Exp}(\beta),\qquad \mu=\mathrm{Exp}(\beta+\lambda),\qquad\nu=\AL(\beta,\lambda)\wedge 0.\]
(c) For $R^{\z}_{(1,1)}$, \cite[(V.11)]{Thiery} gives the stationary solutions: for $\gamma>\la>0$, $\beta >0$,
\[\tilde{\mu}= \AL(\beta,\gamma), \qquad \mu = \AL(\beta+\lambda,\gamma-\lambda),\qquad \nu=\AL(\beta,\lambda)\wedge0.\]
Discrete versions of these solutions are also presented in \cite{Thiery}.
\end{rem}

To put our approach into greater context, let us note that in identifying the stationary measures of a zero-temperature model, a common starting point is to take appropriate zero-temperature limits of the stationary measures of the corresponding positive-temperature model. In general, such an approach only identifies continuous stationary measures (cf. Proposition \ref{ztlmapmeas} below), and misses the discrete ones, which then have to be found in an ad hoc fashion. For instance, as is commented in \cite{Thiery}, the definition of the Bernoulli-geometric polymer was found by `trial and error'. An advantage of our first step of relating each of the zero-temperature polymer models to one of two basic bijections, albeit non-bijectively, is that it readily allows us to read off both discrete and continuous stationary measures. For example, the expression for $\nu=\AL(\sigma,\rho)\wedge 0$ (and its discrete version) in Theorem \ref{t45}(b), which is new, readily follows from Propositions \ref{p34} and \ref{gbz}. Indeed, putting the latter results together gives that the relevant $\nu$ is, in the continuous case, given as:
\[\nu=I^\z \circ (Q^{-1})^\z \circ I^\z \left(\AL(\rho,\sigma)\right)=I^\z \circ (Q^{-1})^\z \left(\AL(\sigma,\rho)\right)=\AL(\sigma,\rho)\wedge0,\]
and, in particular, we clearly see that the atom in the distribution $\nu$ at 0 arises from the cut-off of the asymmetric Laplace distribution by the non-bijective map $(Q^{-1})^\z$. As we discuss in the next section, in both the positive- and zero-temperature cases, the picture in terms of basic bijections also helps to clarify the connections between the stationary measures of different polymer models.

To finish this section, we further note that one can obtain solvable polymer models as limits of stochastic vertex models, such as the six vertex model and higher-spin vertex models, for which stationary versions have been studied; for background, see \cite{AA, C2,C3, IMS}. Moreover, the beta polymer arises as a limit of the $q$-Hahn Boson process, a stationary version of which is discussed in \cite{C1}. Thus, another potential route for deriving the stationary measures of polymer models is via their connections with these other models. As far as we are aware, such limits have not yet been explored systematically in the literature.

\section{Relationships between models}\label{sec5}

In this section, we explore relationships between the basic bijections $F_{\tBeta,\tBeta}$, $F_{\Gam,\tBeta}$, $F_{\AL,\AL}$ and $F_{\E,\AL}$, and also the associated detailed balance solutions. We break the discussion into three parts. Firstly, in the positive-temperature setting, we start by describing how both $F_{\Gam,\tBeta}$ and $F_{\Gam,\tBeta}^{-1}$ can be deduced as suitable limits of $F_{\tBeta,\tBeta}$, and also derive relationships between the various detailed balance solutions; see Subsection \ref{ptsec}. Secondly, in Subsection \ref{ztlsec}, we present both bijection and detailed balance solution versions of zero-temperature limits for $F_{\tBeta,\tBeta}$, $F_{\Gam,\tBeta}$ and  $F_{\Gam,\tBeta}^{-1}$. Thirdly, in Subsection \ref{ztsec}, we set out zero-temperature versions of the results of Subsection \ref{ptsec}, which include links between $F_{\AL,\AL}$ and both $F_{\E,\AL}$ and $F_{\E,\AL}^{-1}$. These results are summarised in Figure \ref{fig2}. We also discuss how, by making suitable changes of variables, corresponding links for the random polymer models and their stationary measures can be obtained in certain cases, see Subsection \ref{ptrem}, as well as Remarks \ref{ptztrem} and \ref{ztrem}. Note that the links we present essentially recover those illustrated in \cite[Figure 1]{Thiery} (see also \cite[Figure 2]{TD}), and also include a novel one between the zero temperature limits of the stationary beta and gamma random polymers (see Remark \ref{ztrem} in particular).

\begin{figure}
\centerline{\xymatrix@R-9pt{\framebox(50,30){\parbox{50pt}{\centering{$F_{\Gam,\tBeta}$}}}\ar@{.>}[d]&
\framebox(50,30){\parbox{50pt}{\centering{$F_{\tBeta,\tBeta}$}}}\ar@<-12pt>[l]\ar@<12pt>[r]\ar@{.>}[d]
&\framebox(50,30){\parbox{50pt}{\centering{$F_{\Gam,\tBeta}^{-1}$}}}\ar@{.>}[d]\\
\framebox(50,30){\parbox{50pt}{\centering{$F_{\E,\AL}$}}}&
\framebox(50,30){\parbox{50pt}{\centering{$F_{\AL,\AL}$}}}\ar@<-12pt>@{-->}[l]\ar@<12pt>@{-->}[r]
&\framebox(50,30){\parbox{50pt}{\centering{$F_{\E,\AL}^{-1}$}}}}}
\caption{Relationships between basic bijections (and corresponding detailed balance solutions). The solid arrows between positive-temperature models are established in Subsection \ref{ptsec}. The dotted arrows are zero-temperature limits, and are made precise in Subsection \ref{ztlsec}. The dashed arrows between zero-temperature models are explained in Subsection \ref{ztsec}.}\label{fig2}
\end{figure}
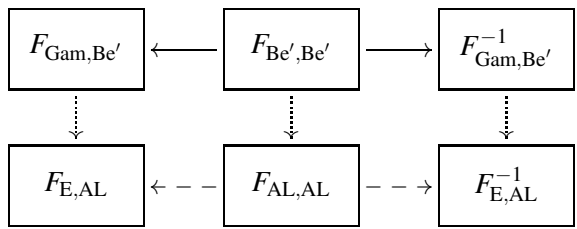

\subsection{Relationships between positive-temperature models}\label{ptsec}

For the positive-temperature versions of our basic bijections, we have the following result.

\begin{prop}\label{ptmap} Uniformly on compact subsets of $(0,\infty)^2$, it holds that
\[\lim_{\delta\rightarrow0}\pi\circ\left(I\times \delta^{-1} I_d\right)\circ F_{\tBeta,\tBeta}\circ \left(\delta^{-1}I\times \delta^{-1}I\right)=F_{\Gam,\tBeta},\]
and also
\[\lim_{\delta\rightarrow0}\left(\delta^{-1}I\times \delta^{-1}I\right)\circ F_{\tBeta,\tBeta}\circ\left(I\times \delta I_d\right)\circ\pi=F_{\Gam,\tBeta}^{-1}\]
\end{prop}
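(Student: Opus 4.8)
The plan is to verify both limits by direct computation, since each reduces to evaluating the explicit formula for $F_{\tBeta,\tBeta}$ on rescaled inputs and then passing $\delta\to0$. First I would fix a point $(x,y)\in(0,\infty)^2$ lying in some compact set $K$, and for the first assertion set $(x',y'):=(\delta^{-1}I(x),\delta^{-1}I(y))=(\tfrac{1}{\delta x},\tfrac{1}{\delta y})$, these being the arguments fed into $F_{\tBeta,\tBeta}$. Recalling that $F_{\tBeta,\tBeta}(u,v)=\left(\tfrac{1+u}{v},\tfrac{1+u+v}{uv}\right)$, I would substitute and simplify: the first output becomes $\tfrac{1+(\delta x)^{-1}}{(\delta y)^{-1}}=\delta y\bigl(1+\tfrac{1}{\delta x}\bigr)=\delta y+\tfrac{y}{x}$, and the second becomes $\tfrac{1+(\delta x)^{-1}+(\delta y)^{-1}}{(\delta x)^{-1}(\delta y)^{-1}}=\delta^2 xy\bigl(1+\tfrac{1}{\delta x}+\tfrac{1}{\delta y}\bigr)=\delta^2 xy+\delta(x+y)$.

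Next I would apply the remaining outer operations. Applying $I\times\delta^{-1}I_d$ sends these two coordinates to $\bigl(\bigl(\delta y+\tfrac{y}{x}\bigr)^{-1},\,\delta^{-1}(\delta^2xy+\delta(x+y))\bigr)=\bigl(\bigl(\delta y+\tfrac{y}{x}\bigr)^{-1},\,\delta xy+(x+y)\bigr)$, and finally $\pi$ swaps them. Letting $\delta\to0$ gives the pair $\bigl(x+y,\,(y/x)^{-1}\bigr)=\bigl(x+y,\tfrac{x}{y}\bigr)=F_{\Gam,\tBeta}(x,y)$, which is exactly the claim. The convergence is uniform on $K$ because all the $\delta$-dependent terms are polynomial in $\delta$ with coefficients bounded uniformly over $K$ (here one uses that $x,y$ stay bounded away from $0$ and $\infty$, so the reciprocals and the denominator $\delta y+y/x$ are uniformly bounded below).

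For the second assertion the strategy is identical but I would exploit the inverse structure rather than recompute from scratch. Since $F_{\tBeta,\tBeta}$ is an involution, $F_{\tBeta,\tBeta}^{-1}=F_{\tBeta,\tBeta}$, so the claimed limit is $(\delta^{-1}I\times\delta^{-1}I)\circ F_{\tBeta,\tBeta}\circ(I\times\delta I_d)\circ\pi\to F_{\Gam,\tBeta}^{-1}$; here I would either invert the first identity using the fact that conjugation limits respect inversion, or simply substitute $(u,v)=(I(y),\delta I_d(x))=(\tfrac1y,\delta x)$ (after the initial $\pi$) into $F_{\tBeta,\tBeta}$ and simplify as before, track the outer $\delta^{-1}I\times\delta^{-1}I$, and take $\delta\to0$, checking the result agrees with $F_{\Gam,\tBeta}^{-1}(x,y)=\bigl(\tfrac{xy}{1+y},\tfrac{x}{1+y}\bigr)$.

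The computations themselves are routine algebra; the only genuine point requiring care is the uniformity of the convergence on compacta, which is where I expect the main (though mild) obstacle to lie. Specifically, one must confirm that after the final rescaling by $\delta^{-1}$ no term blows up — the $\delta^{-1}$ factors must cancel against the $\delta$ and $\delta^2$ factors produced inside $F_{\tBeta,\tBeta}$, leaving only $O(\delta)$ remainders — and that the denominators appearing (such as $\delta y+y/x$) are bounded uniformly away from zero on $K$, so that the reciprocals converge uniformly. Both facts follow from $K$ being a compact subset of the open quadrant, but they are the steps worth stating explicitly to justify the mode of convergence asserted in the proposition.
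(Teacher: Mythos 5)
Your proposal is correct and takes essentially the same approach as the paper: a direct computation of the composed map (your intermediate expression $\bigl(x+y+\delta xy,\ \tfrac{x}{y+\delta xy}\bigr)$ is exactly the formula the paper exhibits), followed by the observation that the limit is immediate, with the second identity handled by the analogous substitution. Your explicit attention to uniformity on compacta (coefficients bounded, denominators bounded away from zero) is a detail the paper leaves implicit but is entirely consistent with its argument.
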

\begin{proof} Since
\[\pi\circ\left(I\times \delta^{-1} I_d\right)\circ F_{\tBeta,\tBeta}\circ \left(\delta^{-1}I\times \delta^{-1}I\right)(x,y)=\left(x+y+\delta xy,\frac{x}{y+\delta xy}\right),\]
the first statement is clear. Checking the second statement is similarly straightforward.
\end{proof}

Using the changes of scale in the above proposition, we have a corresponding connection between the detailed balance solutions of the bijections in question. The limits should be interpreted as those given by convergence in distribution.

\begin{prop}\label{ptmeas}
With $(\rho_\delta,\sigma_\delta,\tau_\delta)=(\delta^{-1}\tau,\rho,\sigma)$ for some $\rho,\sigma,\tau\in(0,\infty)$, we have
\[\lim_{\delta\rightarrow0}\left(\delta^{-1}I\times \delta^{-1}I\right)\left(\tBeta(\rho_\delta,\sigma_\delta) \times \tBeta(\rho_\delta+\sigma_\delta,\tau_\delta)\right) =\Gam(\rho,\tau)\times \Gam(\sigma,\tau),\]
and also
\[\lim_{\delta\rightarrow0}\pi\circ\left(I\times \delta^{-1} I_d\right)\left( \tBeta(\tau_\delta,\sigma_\delta)\times\tBeta(\sigma_\delta+\tau_\delta,\rho_\delta)\right)=\Gam(\rho+\sigma,\tau)\times \tBeta(\rho,\sigma).\]
\end{prop}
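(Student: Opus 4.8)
The plan is to reduce both displayed limits to a single distributional fact about beta prime variables whose first or second shape parameter diverges, and then to exploit the product structure of the measures, so that the transposition $\pi$ and the coordinatewise scalings merely relabel and rescale the two (independent) marginals. The key device is the classical representation $\tBeta(a,b)\overset{d}{=}G_a/G_b$, where $G_a\sim\Gam(a,1)$ and $G_b\sim\Gam(b,1)$ are independent; this is consistent with the conventions in force here, being exactly the assertion (encoded in Proposition \ref{gb}(a) via the detailed balance relation for $F_{\Gam,\tBeta}$) that $X/Y\sim\tBeta(\rho,\sigma)$ when $X\sim\Gam(\rho,\tau)$ and $Y\sim\Gam(\sigma,\tau)$ are independent, the common rate $\tau$ cancelling in the ratio.

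Using this, I would first isolate the following master claim: if $V_\delta\sim\tBeta(a,c\delta^{-1}+b)$ for fixed $a,c>0$ and $b\geq0$, then $\delta^{-1}V_\delta\to\Gam(a,c)$ in distribution as $\delta\downarrow0$. To prove it, write $V_\delta\overset{d}{=}G_a/G_\delta$ with $G_a\sim\Gam(a,1)$ and $G_\delta\sim\Gam(c\delta^{-1}+b,1)$ independent, so that $\delta^{-1}V_\delta\overset{d}{=}G_a/(\delta G_\delta)$. Since $\delta G_\delta$ has mean $c+\delta b\to c$ and variance $\delta^2(c\delta^{-1}+b)=\delta c+\delta^2 b\to0$, it converges to the constant $c$ in probability, and Slutsky's theorem yields $\delta^{-1}V_\delta\to G_a/c\sim\Gam(a,c)$ in distribution, as claimed.

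With the master claim in hand, I would read off each marginal limit by matching parameters. In the first display the input marginals are $\tBeta(\tau\delta^{-1},\rho)$ and $\tBeta(\tau\delta^{-1}+\rho,\sigma)$; the map $\delta^{-1}I$ sends each variable to $\delta^{-1}$ times its reciprocal, and the reciprocals have laws $\tBeta(\rho,\tau\delta^{-1})$ and $\tBeta(\sigma,\tau\delta^{-1}+\rho)$, i.e. precisely the form of the master claim with $(a,c,b)=(\rho,\tau,0)$ and $(\sigma,\tau,\rho)$, giving the two coordinate limits $\Gam(\rho,\tau)$ and $\Gam(\sigma,\tau)$. In the second display the second input marginal $\tBeta(\rho+\sigma,\tau\delta^{-1})$ is already of the master form with $(a,c,b)=(\rho+\sigma,\tau,0)$, and is acted on by the plain scaling $\delta^{-1}I_d$, giving $\Gam(\rho+\sigma,\tau)$; the first marginal $\tBeta(\sigma,\rho)$ is mapped by $I$ to $\tBeta(\rho,\sigma)$ independently of $\delta$, using $I(\tBeta(\rho,\sigma))=\tBeta(\sigma,\rho)$ as recorded before Theorem \ref{t42}. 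Since each input is a product and each of $\delta^{-1}I\times\delta^{-1}I$ and $I\times\delta^{-1}I_d$ acts coordinatewise, the two output coordinates stay independent for every $\delta$, so joint convergence of the pair follows from the marginal convergences, and the $\pi$ in the second display merely swaps the limiting factors to produce $\Gam(\rho+\sigma,\tau)\times\tBeta(\rho,\sigma)$.

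I expect the only delicate point to be the bookkeeping of shape/rate conventions, so that the diverging-shape gamma concentrates at the correct constant $\tau$ and the surviving gamma carries rate $\tau$ rather than its reciprocal; the genuine probabilistic content is just the elementary $L^2$ concentration of $\delta G_\delta$, after which everything is parameter matching. Should one prefer to avoid the ratio representation, the master claim can instead be proved by a direct density computation together with Scheffé's lemma: writing the density of $\delta^{-1}V_\delta$ and using $\Gamma(c\delta^{-1}+b)/\Gamma(a+c\delta^{-1}+b)\sim(c\delta^{-1})^{-a}$ together with $(1+\delta w)^{-c\delta^{-1}}\to e^{-cw}$ recovers the $\Gam(a,c)$ density $\frac{c^{a}}{\Gamma(a)}w^{a-1}e^{-cw}$; this route is more computational but equally routine.
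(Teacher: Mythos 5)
Your proposal is correct, but it follows a genuinely different route from the paper's. The paper argues analytically: for $X_\delta\sim\tBeta(\rho_\delta,\sigma_\delta)$ it computes the density of $Y_\delta:=\delta^{-1}X_\delta^{-1}$ by a change of variables, obtaining $\delta(\delta y)^{\sigma_\delta-1}(1+\delta y)^{-(\rho_\delta+\sigma_\delta)}/B(\rho_\delta,\sigma_\delta)$, and then invokes the gamma-function asymptotics $\delta^\rho\Gamma(\tfrac{\tau}{\delta}+\rho)/\Gamma(\tfrac{\tau}{\delta})\to\tau^\rho$ to show this density converges, uniformly on compacts, to the $\Gam(\rho,\tau)$ density, with the remaining marginals handled similarly; this is essentially the ``alternative'' density-plus-Scheff\'e route you sketch in your final paragraph. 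Your primary argument is instead probabilistic: the ratio representation $\tBeta(a,b)\overset{d}{=}G_a/G_b$ (legitimately extracted from Proposition \ref{gb}(a), i.e.\ Lukacs' theorem) reduces every $\delta$-dependent marginal to a single master claim, $\delta^{-1}\tBeta(a,c\delta^{-1}+b)\to\Gam(a,c)$, which you prove by $L^2$-concentration of $\delta G_\delta$ at $c$ together with Slutsky's theorem; independence of the coordinates then upgrades marginal to joint convergence, and $\pi$ merely permutes the limiting factors. Your parameter bookkeeping is accurate throughout (the reciprocals have laws $\tBeta(\rho,\tau\delta^{-1})$, $\tBeta(\sigma,\tau\delta^{-1}+\rho)$, and $G_a/c\sim\Gam(a,c)$ in the paper's rate convention), so there is no gap. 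The trade-off: your route avoids special-function asymptotics and Scheff\'e altogether, and unifies both displays under one lemma delivering exactly the asserted convergence in distribution, while the paper's route yields the stronger local statement of uniform-on-compacts convergence of densities, which sits naturally alongside the uniform-on-compacts convergence of the maps in Proposition \ref{ptmap}.
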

\begin{proof} Suppose $X_\delta\sim \tBeta(\rho_\delta,\sigma_\delta)$, and set $Y_\delta:=\delta^{-1}X^{-1}_\delta$. An elementary change of variables yields that the density of $Y_\delta$ is given by
\[\frac{\delta\left(\delta y\right)^{\sigma_\delta-1}\left(1+\delta y\right)^{-(\rho_\delta+\sigma_\delta)}}{B(\rho_\delta,\sigma_\delta)},\]
where $B(\rho,\sigma)$ is the beta function. Using that $\delta^\rho\Gamma(\frac{\tau}{\delta}+\rho)/\Gamma(\frac{\tau}{\delta})\rightarrow \tau^\rho$ (where $\Gamma(x)$ is the gamma function), one readily obtains that the above formula converges (uniformly on compacts) to the density of a $\Gam(\rho,\tau)$ distribution. Dealing with the other marginals is similar.
\end{proof}

To complete our discussion of the links between the positive-temperature bijections $F_{\tBeta,\tBeta}$ and $F_{\Gam,\tBeta}$, we explain how the preceding two statements can be used to connect their detailed balance solutions in a rigourous way. In particular, from Proposition \ref{ptmap}, we have that
\begin{equation}\label{l1}
\phi_\delta\circ F_{\tBeta,\tBeta}\circ \psi_\delta\rightarrow  F_{\Gam,\tBeta}
\end{equation}
as $\delta\rightarrow 0$, where $\phi_\delta:=\pi\circ(I\times \delta^{-1} I_d)$ and $\psi_\delta:=(\delta^{-1}I\times \delta^{-1}I)$ are bijective changes of scale on $(0,\infty)^2$. Moreover, Proposition \ref{ptmeas} tells us that if
\begin{equation}\label{m1}
\mu_\delta := \tBeta(\rho_\delta,\sigma_\delta),\quad \nu_\delta := \tBeta(\rho_\delta+\sigma_\delta,\tau_\delta),\quad \tilde{\mu}_\delta := \tBeta(\tau_\delta,\sigma_\delta),\quad \tilde{\nu}_\delta := \tBeta(\sigma_\delta+\tau_\delta,\rho_\delta),
\end{equation}
then
\begin{equation}\label{l2}
\psi^{-1}_\delta\left(\mu_{\delta}\times \nu_\delta\right)\rightarrow \mu\times\nu,\qquad
\phi_\delta\left(\tilde{\mu}_{\delta}\times \tilde{\nu}_\delta\right)\rightarrow \tilde{\mu}\times\tilde{\nu},
\end{equation}
where
\begin{equation}\label{m2}
\mu := \Gam(\rho,\tau),\qquad \nu := \Gam(\sigma,\tau),\qquad\tilde{\mu}:= \Gam(\rho+\sigma,\tau),\qquad \tilde{\nu}:= \tBeta(\rho,\sigma).
\end{equation}
Since from Proposition \ref{gb}(b) we know that, for each $\delta>0$, the measures at \eqref{m1} solve the detailed balance condition for $F_{\tBeta,\tBeta}$, one readily obtains from \eqref{l1} and \eqref{l2} that the measures at \eqref{m2} solve the detailed balance condition for $F_{\Gam,\tBeta}$, thus giving an alternative proof of one of the directions of implication of Proposition \ref{gb}(a). One may similarly use the second statement of Proposition \ref{ptmap} to connect to the detailed balance solutions for $F_{\Gam,\tBeta}^{-1}$.

\subsubsection{Relations between positive-temperature random polymer models}\label{ptrem}

Applying the above arguments, one can further obtain a number of connections between random polymer models. Specifically, it is possible to explain how the maps describing the gamma/inverse-gamma random polymer models, and their stationary solutions, can be deduced from the maps and measures arising in the inverse-beta model. The convergence of maps in particular illuminates previous results connecting the partition functions of the models (which also follow from our conclusions). On the other hand, although it is also possible to connect the beta model to the gamma model in a way that yields convergence of the partition function, the relation we describe between the beta and inverse-gamma cases is such that we can not link the partition functions of the two models.

We start by combining Propositions \ref{p31} and \ref{ptmap} to deduce the following limiting results for the underlying bijections $F_{(\alpha,\beta)}$. For parts (a)-(b), we use that $\delta Q I (\delta^{-1}x)\rightarrow x$ as $\delta\rightarrow 0$ (uniformly on compacts) to simplify some of the changes of variables to linear ones, and also choose between $F_{(-1,1)}$ and $F_{(1,1)}$ in order to avoid the appearance of the coordinate transposition $\pi$. Part (c) involves the non-linear transformation $I\circ J\circ I$ (where we recall that $J:=Q^{-1}\circ I\circ Q$) and $\pi$, but somewhat remarkably (as we discuss in Remark \ref{ptptrem} below), these operations fit perfectly with the invariant measures, so that we can still obtain convergence of the partition functions. Finally, in part (d), we require a second order term to obtain the relevant limit (clearly the first order term in the change of variables is degenerate); this difference seems to be a fundamental issue when it comes to linking the beta and inverse-gamma models.

\begin{prop}\label{p53} The following limits hold uniformly on compact subsets of the domain of the limiting bijection.\\
(a) (Inverse-beta to inverse-gamma.)
\[\lim_{\delta\rightarrow0} \left(\delta I_d\times I_d\right)\circ F_{(-1,1)}\circ\left(\delta^{-1} I_d\times \delta^{-1}I_d\right)=F_{(0,1)}.\]
(b) (Inverse-beta to gamma.)
\[\lim_{\delta\rightarrow0} \left(\delta^{-1} I_d\times \delta I_d\right)\circ F_{(1,1)}\circ\left(\delta I_d\times I_d\right)=F_{(1,0)}.\]
(c) (Beta to gamma.)
\[\lim_{\delta\rightarrow0}\pi\circ\left(\delta I\times \delta^{-1} I\right)\circ F_{(1,-1)}\circ\left(\delta I_d\times (I\circ J\circ I)\right)=F_{(1,0)}.\]
(d) (Beta to inverse-gamma.)
\[\lim_{\delta\rightarrow0}\pi\circ\left((I\circ Q)\times( \delta I\circ Q\circ I)\right)\circ F_{(1,-1)}\circ\left((\mathbf{1}-\delta I)\times (\mathbf{1}+\delta I)\right)\circ \pi=F_{(0,1)},\]
where $\mathbf{1}$ is the function taking the value $1$ everywhere.
\end{prop}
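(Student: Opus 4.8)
The plan is to establish each of the four limits by direct computation: one composes the stated changes of scale with the relevant bijection $F_{(\a,\b)}$, simplifies the result into an explicit rational function of $(x,y)$ depending on the parameter $\delta$, and then lets $\delta\downarrow 0$. Throughout I would use the conventions fixed above, so that e.g.\ $\delta^{-1}I$ denotes $t\mapsto \delta^{-1}t^{-1}$ and $\delta I_d$ denotes $t\mapsto \delta t$, while $I\circ J\circ I$ and $I\circ Q$ are the nonlinear maps $t\mapsto t/(t-1)$ and $t\mapsto t/(1-t)$ (recalling $J(x)=1-x$). Since at each stage one obtains a rational function of $(x,y,\delta)$ whose denominator tends, as $\delta\downarrow0$, to a function bounded away from zero on any compact subset of the domain of the limiting bijection, convergence is automatically uniform on compacts; I would record this observation once and thereafter concentrate on identifying the pointwise limit.

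For part (a), writing $u:=\delta^{-1}x$, $v:=\delta^{-1}y$ and using $F_{(-1,1)}(u,v)=(u(v+1)/(u+v),v/u)$, the factor $\delta^{-1}$ cancels in both coordinates, and applying $\delta I_d\times I_d$ yields $(x(y+\delta)/(x+y),\,y/x)$, which tends to $F_{(0,1)}(x,y)$. Part (b) is entirely analogous: with $u:=\delta x$, $v:=y$ and $F_{(1,1)}(u,v)=(u(v-1)/(u+v),v/u)$, applying $\delta^{-1}I_d\times\delta I_d$ gives $(x(y-1)/(y+\delta x),\,y/x)\to F_{(1,0)}(x,y)$. For part (c), I would first compute $I\circ J\circ I(y)=y/(y-1)$, set $u:=\delta x$ and $v:=y/(y-1)$ so that $v-1=1/(y-1)$, and simplify $F_{(1,-1)}(u,v)=(u(v-1)/(v-u),v/u)$; after applying $\delta I\times\delta^{-1}I$ and the outer transposition $\pi$, one is left with $(x(y-1)/y,\,(y-\delta x(y-1))/x)$, which converges to $F_{(1,0)}(x,y)$. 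In each of (a)--(c) the only point to watch is that the rescalings are chosen precisely so that the diverging prefactors cancel, leaving a finite limit.

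The substantive case is part (d), which I expect to be the main obstacle. Here the inner change of scale $(\mathbf{1}-\delta I)\times(\mathbf{1}+\delta I)$ (applied after the inner $\pi$) sends $(y,x)$ to $(u,v):=(1-\delta/y,\,1+\delta/x)$, pushing \emph{both} coordinates towards the point $1$, at which $F_{(1,-1)}$ is singular; consequently the first-order term of the change of variables is degenerate and one must retain the $O(\delta)$ contributions with care. The key cancellation is that in $F_{(1,-1)}^{(1)}(u,v)=u(v-1)/(v-u)$ both $v-1=\delta/x$ and $v-u=\delta(x+y)/(xy)$ are of order $\delta$, so their ratio has a finite limit: explicitly $F_{(1,-1)}(u,v)=\big((y-\delta)/(x+y),\,(1+\delta/x)/(1-\delta/y)\big)$. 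It then remains to apply the outer nonlinear maps, whereby $I\circ Q$ sends the first coordinate $(y-\delta)/(x+y)$ to $(y-\delta)/(x+\delta)$, and $\delta I\circ Q\circ I$ sends the second coordinate $(1+\delta/x)/(1-\delta/y)$ to $(1-\delta/y)/(1/x+1/y)$; letting $\delta\downarrow0$ and applying the outer $\pi$ produces $(xy/(x+y),\,y/x)=F_{(0,1)}(x,y)$, as required. The delicate part throughout is the bookkeeping of competing factors of $\delta$, so that the degenerate leading order is handled correctly and the genuine limit emerges from the next order; once the intermediate expressions are in hand, uniform convergence on compacts follows as above from the continuity of the rational functions involved and the non-vanishing of their limiting denominators.
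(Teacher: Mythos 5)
Your computations are correct: I checked all four parts, and in each case the composed map simplifies exactly as you state — in particular your identification of the key cancellation in part (d), where both $v-1=\delta/x$ and $v-u=\delta(x+y)/(xy)$ are of order $\delta$ so that $F_{(1,-1)}^{(1)}(u,v)=(y-\delta)/(x+y)$ has a finite limit, is precisely the delicate point, and your final expressions $\bigl(\tfrac{x(y+\delta)}{x+y},\tfrac{y}{x}\bigr)$, $\bigl(\tfrac{x(y-1)}{y+\delta x},\tfrac{y}{x}\bigr)$, $\bigl(\tfrac{x(y-1)}{y},\tfrac{y-\delta x(y-1)}{x}\bigr)$ and $\bigl(\tfrac{xy(1-\delta/y)}{x+y},\tfrac{y-\delta}{x+\delta}\bigr)$ all check out, as does the uniform-on-compacts argument via non-vanishing limiting denominators. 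However, your route differs from the paper's: rather than computing each composition from scratch, the paper deduces Proposition 5.3 by combining Proposition 3.2 (which conjugates each $F_{(\a,\b)}$ to one of the basic bijections $F_{\Gam,\tBeta}$, $F_{\Gam,\tBeta}^{-1}$, $F_{\tBeta,\tBeta}$ by explicit bijective changes of variables) with Proposition 5.1 (the scaling limits taking $F_{\tBeta,\tBeta}$ to $F_{\Gam,\tBeta}$ and $F_{\Gam,\tBeta}^{-1}$), using in parts (a)--(b) the simplification $\delta Q I(\delta^{-1}x)\to x$ to replace nonlinear changes of variables by linear ones, and choosing between $F_{(-1,1)}$ and $F_{(1,1)}$ to avoid transpositions. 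The paper's derivation has the advantage of explaining \emph{where} the otherwise mysterious changes of scale come from — they are conjugates, through the dictionary of Proposition 3.2, of the single basic limit $F_{\tBeta,\tBeta}\to F_{\Gam,\tBeta}$ — whereas your direct verification is more self-contained, avoids the extra approximation step needed to linearise the conjugations (which itself requires a uniform-convergence justification the paper leaves implicit), and lays bare the order-$\delta$ bookkeeping in part (d) that the paper only alludes to when it remarks that the first-order term of the change of variables is degenerate.
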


As a simple corollary of the first two cases of the preceding result, one is able to deduce the corresponding limit for the maps $R_{(\alpha,\beta)}$. The presence of the transposition operator $\pi$ means that it is not possible to read off the analogous results for cases (c) and (d).

\begin{cor}\label{c54} The following limits hold uniformly on compact subsets of the domain of the limiting map.\\
(a) (Inverse-beta to inverse-gamma.)
\[\lim_{\delta\rightarrow0} \left(\delta I_d\times \delta I_d\right)\circ R_{(-1,1)}\circ\left(\delta^{-1} I_d\times \delta^{-1} I_d\times \delta^{-1}I_d\right)=R_{(0,1)}.\]
(b) (Inverse-beta to gamma.)
\[\lim_{\delta\rightarrow0} \left(\delta^{-1} I_d\times I_d\right)\circ R_{(1,1)}\circ\left(\delta I_d\times \delta I_d\times I_d\right)=R_{(1,0)}.\]
\end{cor}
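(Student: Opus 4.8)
The plan is to lift the bijection-level limits in Proposition \ref{p53}(a)--(b) to the level of the maps $R_{(\a,\b)}$, exploiting the decomposition $R_{(\a,\b)}=\overline{F_{(\a,\b)}}^{(2,3)}$ from \eqref{FR}. The link I would use is a conjugation identity describing how the operation $F\mapsto\overline{F}^{(2,3)}$ interacts with product-form rescalings of the domain and codomain of $F$. Concretely, reading off from \eqref{Fbar} that
\[\overline{F}^{(2,3)}(a,b,c)=F^{-1}\bigl(a,F^{(2)}(b,c)\bigr),\]
a short computation shows that, for product bijections $\phi=\phi_1\times\phi_2$ and $\psi=\psi_1\times\psi_2$ and $F_\delta:=\phi\circ F\circ\psi$,
\[\overline{F_\delta}^{(2,3)}=(\psi_1^{-1}\times\psi_2^{-1})\circ\overline{F}^{(2,3)}\circ(\phi_1^{-1}\times\psi_1\times\psi_2).\]
The decisive feature is that the factor $\phi_2$ cancels, because $\phi^{-1}$ is applied to a second coordinate that already carries a $\phi_2$; thus the rescaling of the second output coordinate of $F$ plays no role, matching the remark on the non-uniqueness of $F$ following \eqref{Fab}.

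Given this identity, the two statements follow by matching parameters. For part (a), I would take $F=F_{(-1,1)}$ with $\phi_1=\delta I_d$, $\phi_2=I_d$, $\psi_1=\psi_2=\delta^{-1}I_d$, so that $F_\delta$ is exactly the conjugate appearing in Proposition \ref{p53}(a); the identity then gives
\[\overline{F_\delta}^{(2,3)}=(\delta I_d\times\delta I_d)\circ R_{(-1,1)}\circ(\delta^{-1}I_d\times\delta^{-1}I_d\times\delta^{-1}I_d),\]
which is precisely the bracketed expression to be studied. For part (b), taking $F=F_{(1,1)}$ with $\phi_1=\delta^{-1}I_d$, $\phi_2=\delta I_d$, $\psi_1=\delta I_d$, $\psi_2=I_d$ recovers the corresponding expression for $R_{(1,1)}$; here it is crucial that the genuinely non-trivial factor $\phi_2=\delta I_d$ disappears under $\overline{\,\cdot\,}^{(2,3)}$.

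It then remains to let $\delta\to0$ inside $\overline{\,\cdot\,}^{(2,3)}$. Since $\overline{F}^{(2,3)}(a,b,c)=F^{-1}(a,F^{(2)}(b,c))$ depends on $F$ only through $F^{(2)}$ and $F^{-1}$, and Proposition \ref{p53}(a)--(b) supply $F_\delta\to F_{(0,1)}$, respectively $F_\delta\to F_{(1,0)}$, uniformly on compacts, I expect the main obstacle to be the accompanying convergence of inverses, $F_\delta^{-1}\to F_{(0,1)}^{-1}$ (respectively $F_{(1,0)}^{-1}$), which is what actually enters $\overline{\,\cdot\,}^{(2,3)}$ and which does not follow formally from uniform convergence of the $F_\delta$ alone. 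For the explicit rational bijections at \eqref{Fab} this convergence of inverses can be verified by direct computation; alternatively it follows from uniform-on-compacts convergence of $F_\delta$ together with the fact that these maps are homeomorphisms with well-behaved inverses. Either way, one concludes $\overline{F_\delta}^{(2,3)}\to\overline{F_{(0,1)}}^{(2,3)}=R_{(0,1)}$, and analogously for part (b).

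Finally, I would record why the same scheme fails for Proposition \ref{p53}(c)--(d), as noted before the statement: there the codomain change of scale incorporates the transposition $\pi$, which is not of product form $\phi_1\times\phi_2$, so the conjugation identity above—and in particular the harmless cancellation of $\phi_2$—no longer holds, and the limit cannot be transferred to the maps $R$.
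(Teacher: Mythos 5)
Your proposal is correct and takes essentially the same approach as the paper: Corollary \ref{c54} is there presented precisely as a ``simple corollary'' of Proposition \ref{p53}(a)--(b), obtained by transferring the bijection-level limits through the decomposition $R_{(\a,\b)}=\overline{F_{(\a,\b)}}^{(2,3)}$, and your conjugation identity---with the cancellation of $\phi_2$ and the observation that the transposition $\pi$ in cases (c)--(d) destroys the product structure---is exactly the mechanism the paper relies on (cf.\ the remark following the corollary). The inverse-convergence subtlety you flag is real but harmless here, since for these explicit rational maps it is settled by direct computation, a detail the paper leaves implicit.
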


With respect to the changes of variables identified in Proposition \ref{p53}, we have the following relationships between invariant measures. The proof is similar to that of Proposition \ref{ptmeas} and is omitted. (The limits of measures are weak limits as $\delta\rightarrow 0$.)

\begin{prop}\label{p55}
(a) (Inverse-beta to inverse-gamma.) Suppose $(\mu_\delta,\nu_\delta,\tilde{\mu}_\delta,\tilde{\nu}_\delta)$ are defined as in Theorem \ref{t42}(c)(i) with parameters $(\rho,\sigma,\tau)$ given by $(\delta^{-1}\tau,\rho,\sigma)$. It then holds that
\[\left(\delta I_d\times \delta I_d\right)\left(\mu_\delta\times\nu_\delta\right)\rightarrow \mu\times\nu,\qquad \left(\delta I_d\times I_d\right)\left(\tilde{\mu}_\delta\times\tilde{\nu}_\delta\right)\rightarrow \tilde{\mu}\times \tilde{\nu},\]
where $(\mu,\nu,\tilde{\mu},\tilde{\nu})$ are defined as in Theorem \ref{t42}(a).\\
(b) (Inverse-beta to gamma.) Suppose $(\mu_\delta,\nu_\delta,\tilde{\mu}_\delta,\tilde{\nu}_\delta)$ are defined as in Theorem \ref{t42}(c)(ii) with parameters $(\rho,\sigma,\tau)$ given by $(\sigma,\rho,\delta^{-1}\tau)$. It then holds that
\[\left(\delta^{-1} I_d\times I_d\right)\left(\mu_\delta\times\nu_\delta\right)\rightarrow \mu\times\nu,\qquad \left(\delta^{-1} I_d\times\delta I_d\right)\left(\tilde{\mu}_\delta\times\tilde{\nu}_\delta\right)\rightarrow \tilde{\mu}\times \tilde{\nu},\]
where $(\mu,\nu,\tilde{\mu},\tilde{\nu})$ are defined as in Theorem \ref{t42}(b).\\
(c) (Beta to gamma.) Suppose $(\mu_\delta,\nu_\delta,\tilde{\mu}_\delta,\tilde{\nu}_\delta)$ are defined as in Theorem \ref{t42}(d) with parameters $(\rho,\sigma,\tau)$ given by $(\sigma,\rho,\delta^{-1}\tau)$. It then holds that
\[\left(\delta^{-1} I_d\times (I\circ J\circ I)\right)\left(\mu_\delta\times\nu_\delta\right)\rightarrow \mu\times\nu,\qquad\pi\circ\left(\delta I\times \delta^{-1} I\right)\left(\tilde{\mu}_\delta\times\tilde{\nu}_\delta\right)\rightarrow \tilde{\mu}\times \tilde{\nu},\]
where $(\mu,\nu,\tilde{\mu},\tilde{\nu})$ are defined as in Theorem \ref{t42}(b).\\
(d) (Beta to inverse-gamma.) Suppose $(\mu_\delta,\nu_\delta,\tilde{\mu}_\delta,\tilde{\nu}_\delta)$ are defined as in Theorem \ref{t42}(d) with parameters $(\rho,\sigma,\tau)$ given by $(\delta^{-1}\tau,\rho,\sigma)$. It then holds that
\[\pi\circ\left((\mathbf{1}-\delta I)^{-1}\times (\mathbf{1}+\delta I)^{-1}\right)\left(\mu_\delta\times\nu_\delta\right)\rightarrow \mu\times\nu,\hspace{80pt}\]
\[\hspace{80pt}\pi\circ\left((I\circ Q)\times( \delta I\circ Q\circ I)\right)\left(\tilde{\mu}_\delta\times\tilde{\nu}_\delta\right)\rightarrow \tilde{\mu}\times \tilde{\nu},\]
where $(\mu,\nu,\tilde{\mu},\tilde{\nu})$ are defined as in Theorem \ref{t42}(a).
\end{prop}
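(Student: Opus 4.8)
The plan is to argue exactly as in the proof of Proposition \ref{ptmeas}, reducing each of the four claims to a collection of one-dimensional statements. Since the laws $\mu_\delta\times\nu_\delta$ and $\tilde{\mu}_\delta\times\tilde{\nu}_\delta$ are product measures, and each of the changes of variables appearing in the statement acts coordinate-wise up to the transposition $\pi$ (which merely relabels the two marginals), it suffices to prove the asserted weak convergence for each marginal separately. I would therefore work through the marginals one at a time, in each case writing down the explicit density of the relevant beta, inverse-beta or beta-prime law (from Appendix \ref{appa}), applying the one-dimensional change of variables to obtain the density of the pushed-forward measure, and then letting $\delta\to 0$.

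Two qualitatively different kinds of marginal arise. For those on which the transformation is independent of $\delta$ (for instance $I\circ J\circ I$ acting on $\nu_\delta=\mathrm{IB}(\sigma,\rho)$ in part (c), or $I_d$ acting on $\nu_\delta=\mathrm{IB}(\rho,\sigma)$ in part (b)), the claimed limit is in fact an exact distributional identity, and the only thing to check is the relevant change-of-variables formula for densities. For those on which the transformation involves a genuine $\delta$-scaling, one of the shape parameters is sent to infinity, and the convergence is a true limit; here I would use the same gamma-function asymptotic $\delta^{\rho}\Gamma(\tfrac{\tau}{\delta}+\rho)/\Gamma(\tfrac{\tau}{\delta})\to\tau^{\rho}$ employed in Proposition \ref{ptmeas} (equivalently, the standard beta-to-gamma and inverse-beta-to-inverse-gamma scaling limits) to show that the transformed density converges, uniformly on compact subsets, to the density of the stated limiting law. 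Pointwise (indeed locally uniform) convergence of densities then upgrades to convergence in distribution by Scheff\'e's lemma, yielding the weak limits required.

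The routine cases are parts (a) and (b), whose changes of variables are linear scalings. The main obstacle is part (d) (beta to inverse-gamma), as already flagged in the discussion preceding Proposition \ref{p53}. Here the source marginal $\mu_\delta=\mathrm{Be}(\delta^{-1}\tau+\rho,\sigma)$ has a diverging first parameter and hence degenerates to a point mass at $1$ as $\delta\to 0$, so that the naive first-order change of variables is degenerate; the map $(\mathbf{1}-\delta I)^{-1}$ is precisely what rescales the fluctuations of $\mu_\delta$ about $1$ onto a nontrivial scale, and extracting the limit amounts to the second-order expansion alluded to there. The care required is in tracking which power of $\delta$ survives in this rescaling and in verifying that the $O(\delta^2)$ corrections vanish in the limit; part (c) demands similar, though milder, attention because of the nonlinear transformation $I\circ J\circ I$. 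Once these density asymptotics are in place, the conclusion follows as in the other parts.
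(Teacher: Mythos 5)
Your proposal is correct and follows essentially the same route as the paper: the paper's own proof of Proposition \ref{p55} is omitted with the remark that it is "similar to that of Proposition \ref{ptmeas}", and your plan — reducing to marginals, computing pushed-forward densities, applying the gamma-function asymptotic $\delta^{\rho}\Gamma(\tfrac{\tau}{\delta}+\rho)/\Gamma(\tfrac{\tau}{\delta})\to\tau^{\rho}$, and noting that the $\delta$-independent marginals (such as $I\circ J\circ I$ acting on $\mathrm{IB}(\sigma,\rho)$) are exact distributional identities — is precisely an execution of that argument. Your flagging of part (d) as the delicate case, with $(\mathbf{1}-\delta I)^{-1}$ rescaling the fluctuations of $\mathrm{Be}(\delta^{-1}\tau+\rho,\sigma)$ about $1$, is likewise consistent with the paper's discussion preceding Proposition \ref{p53}.
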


\begin{rem}\label{ptptrem}
We now discuss case-by-case the implications of Proposition \ref{p55} for the partition functions of the random polymer models.\\
(a) For each $\delta$, let $\tilde{\mu}_\delta$, $\mu_\delta$ and $\nu_\delta$ be defined as in Proposition \ref{p55}(a). Consider the stationary inverse-beta random polymer corresponding to $R_{(-1,1)}$ with $(X^\delta_{n,m})_{n,m \in \mathbb{N}}$ being i.i.d.\ $\tilde{\mu}_\delta$ random variables, and boundary condition being given by independent random variables $(U_{n,0})_{ n \in \N}$ and $(V_{0,n})_{n \in \N}$ with marginals $\mu_\delta$ and $\nu_\delta$, respectively, and $Z^\delta_{0,0}=1$. It then immediately follows from Corollary \ref{c54} and Proposition \ref{p55} that the associated partition function $(Z^\delta_{n,m})_{n,m \in \mathbb{Z}_+}$ satisfies
\[\left(\delta^{n+m}Z^\delta_{n,m}\right)_{n,m \in \mathbb{Z}_+}\rightarrow \left(Z_{n,m}\right)_{n,m \in \mathbb{Z}_+}\]
in distribution as $\delta\rightarrow 0$, where $(Z_{n,m})_{n,m \in \mathbb{Z}_+}$ is the partition function of the stationary inverse-gamma model corresponding to $R_{(0,1)}$, as defined from the limiting measures $\tilde{\mu}$, $\mu$ and $\nu$. We note that the stationary $R_{(-1,1)}$ model with $\tilde{\mu}_\delta:=\mathrm{IB}(\rho+\sigma,\delta^{-1}\tau)$ is simply a reflection in the line $x=y$ of the stationary inverse-beta random polymer corresponding to $R_{(1,1)}$ with
\[\tilde{\mu}_\delta:=(Q\circ I)\left(\mathrm{IB}(\rho+\sigma,\delta^{-1}\tau)\right)=\mathrm{Be}'(\delta^{-1}\tau,\rho+\sigma)=\frac{1}{\mathrm{Be}(\rho+\sigma,\delta^{-1}\tau)}-1.\]
(Cf.\ \cite[Lemma 4.1(f)]{CN}.) Hence the above partition function limit, which stems from the divergence of second parameter of the inverse-beta distribution, matches that described in \cite[Section IV.D.1]{Thiery} (see also \cite{TD}).\\
(b) A similar argument applies in case (b), where $(Z^\delta_{n,m})_{n,m \in \mathbb{Z}_+}$ is the partition function of the stationary inverse-beta random polymer corresponding to $R_{(1,1)}$ and $(Z_{n,m})_{n,m \in \mathbb{Z}_+}$ is the partition function of the stationary inverse-gamma model corresponding to $R_{(1,0)}$. However, since in this case we only have scaling in one-direction, the appropriate form of the result is as follows:
\[\left(\delta^{-n}Z^\delta_{n,m}\right)_{n,m \in \mathbb{Z}_+}\rightarrow \left(Z_{n,m}\right)_{n,m \in \mathbb{Z}_+}\]
in distribution as $\delta\rightarrow 0$. Now,
\[\tilde{\mu}_\delta:=\mathrm{Be}'(\sigma,\rho+\delta^{-1}\tau)=(Q\circ I)\left(\mathrm{IB}(\rho+\delta^{-1}\tau,\sigma)\right)=\frac{1}{\mathrm{Be}(\rho+\delta^{-1}\tau,\sigma)}-1,\]
and so we can see the above limit arises from the divergence of the first parameter of the inverse-beta distribution. Thus we recover the discussion of \cite[Section IV.D.2]{Thiery} (see also \cite{TD}).\\
(c) Due to the non-linearity of the change of variables and the appearance of the transposition map $\pi$ in Proposition \ref{p55}(c), one can not adopt the same approach to deduce that the partition function of the stationary beta random polymer converges under an associated scaling to that of the stationary gamma model. However, applying the symmetries of the distributions in question, it turns out that if $(\mu_\delta,\nu_\delta,\tilde{\mu}_\delta,\tilde{\nu}_\delta)$ are defined as in Proposition \ref{p55}(c), i.e.\ with parameters $(\sigma,\rho,\delta^{-1}\tau)$, and $(\mu'_\delta,\nu'_\delta,\tilde{\mu}'_\delta,\tilde{\nu}'_\delta)$ are defined similarly with parameters $(\rho,\sigma,\delta^{-1}\tau)$, then (surprisingly to us)
\[\left(\delta^{-1} I_d\times I_d\right)\left(\mu'_\delta\times\nu'_\delta\right)\sim\left(\delta^{-1} I_d\times (I\circ J\circ I)\right)\left(\mu_\delta\times\nu_\delta\right)\rightarrow \mu\times\nu,\]
\[\left(\delta^{-1} I_d\times \delta I_d\right)\left(\tilde{\mu}'_\delta\times\tilde{\nu}'_\delta\right)\sim\pi\circ\left(\delta I\times \delta^{-1} I\right)\left(\tilde{\mu}_\delta\times\tilde{\nu}_\delta\right)\rightarrow \tilde{\mu}\times \tilde{\nu},\]
where $(\mu,\nu,\tilde{\mu},\tilde{\nu})$ are defined as in Theorem \ref{t42}(b) and we use the symbol $\sim$ to simply mean that the distributions share the same limit as $\delta\rightarrow 0$. Moreover, considering the changes of variables in the left-hand sides above, one finds that
\[\lim_{\delta\rightarrow0}\left(\delta^{-1} I_d\times \delta I_d\right)\circ F_{(1,-1)}\circ\left(\delta I_d\times I_d\right)=F_{(1,0)},\]
which in turn implies
\begin{equation}\label{eqr}
\lim_{\delta\rightarrow0} \left( \delta^{-1} I_d\times I_d\right)\circ R_{(1,-1)}\circ\left(\delta I_d\times \delta I_d\times I_d\right)=R_{(1,0)}.
\end{equation}
In particular, it follows that
\[\left(\delta^{-n}Z^\delta_{n,m}\right)_{n,m \in \mathbb{Z}_+}\rightarrow \left(Z_{n,m}\right)_{n,m \in \mathbb{Z}_+}\]
in distribution as $\delta\rightarrow 0$, where $Z^\delta$ is the partition function of the stationary beta random polymer corresponding to $R_{(1,-1)}$, as defined from $\tilde{\mu}'_\delta$, $\mu'_\delta$ and $\nu'_\delta$, and $Z_{n,m}$ is the partition function of the limiting stationary gamma random polymer corresponding to $R_{(1,0)}$. The latter link between the beta and gamma models, as corresponding to the divergence of the second beta parameter, was previously observed in \cite{BC} (see \cite[Remark 1.5]{BC} in particular).\\
(d) The relation between the beta and inverse-gamma models that is stated as Proposition \ref{p55}(d) is apparently novel. Unlike in the previous case, however, we do not see a way to deduce from this the convergence of partition functions, if indeed this is possible.
\end{rem}

\subsection{Zero-temperature limits}\label{ztlsec}

Connecting positive- and zero-temperature bijections and detailed balance solutions, we have the following analogue of Propositions \ref{ptmap} and \ref{ptmeas}.

\begin{prop}\label{ztlmapmeas}(a) It holds that
\[ F_{\tBeta,\tBeta}^\z=F_{\AL,\AL},\qquad F_{\Gam,\tBeta}^\z=F_{\E,\AL},\qquad \left(F_{\Gam,\tBeta}^{-1}\right)^\z=F_{\E,\AL}^{-1},\]
with the convergence at \eqref{ztl} holding uniformly on compact subsets of $\mathbb{R}^2$.\\
(b)(i) For $\rho,\sigma,\tau>0$, it holds that
\begin{align*}
&\lim_{\varepsilon\rightarrow 0}S_\varepsilon^{-1}\left(\tBeta(\varepsilon \rho,\varepsilon \sigma)\times \tBeta(\varepsilon(\rho+\sigma),\varepsilon \tau)\times \tBeta(\varepsilon \tau,\varepsilon \sigma)\times\tBeta(\varepsilon(\sigma+\tau),\varepsilon \rho)\right)\\
&\qquad\qquad = \mathrm{AL}(\rho,\sigma)\times  \mathrm{AL}(\rho+\sigma,\tau)\times \mathrm{AL}(\tau,\sigma)\times \mathrm{AL}(\sigma+\tau,\rho).
\end{align*}
(ii) For $\rho,\sigma,\tau>0$, it holds that
\begin{align*}
&\lim_{\varepsilon\rightarrow 0}S_\varepsilon^{-1}\left(\Gam(\varepsilon \rho,e^{\tau/\varepsilon})\times \Gam(\varepsilon  \sigma,e^{\tau/\varepsilon})\times \Gam(\varepsilon (\rho+\sigma),e^{\tau/\varepsilon})\times \tBeta(\varepsilon  \rho,\varepsilon \sigma)\right)\\
&\qquad\qquad = \mathrm{sExp}(\rho,\tau)\times \mathrm{sExp}(\sigma,\tau)\times \mathrm{sExp}(\rho+\sigma,\tau)\times \AL(\rho,\sigma).
\end{align*}
\end{prop}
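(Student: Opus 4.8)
The plan is to treat the two parts separately: part (a) is a direct substitution computation, while part (b) is a density-asymptotics argument, both resting on the same elementary ``log-sum-exp'' limit.

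For part (a), I would write $S_\varepsilon^{-1}(z)=-\varepsilon\log z$ and substitute $S_\varepsilon(x,y)=(e^{-x/\varepsilon},e^{-y/\varepsilon})$ directly into each of the three bijections, then apply $S_\varepsilon^{-1}$ coordinatewise and pass to the limit. The only analytic input needed is the fact that $-\varepsilon\log\bigl(e^{-a_1/\varepsilon}+\cdots+e^{-a_n/\varepsilon}\bigr)\to a_1\wedge\cdots\wedge a_n$ as $\varepsilon\downarrow0$, uniformly for $(a_1,\dots,a_n)$ in compact subsets of $\R^n$; this follows from the two-sided bound $a_1\wedge\cdots\wedge a_n-\varepsilon\log n\le-\varepsilon\log(\sum_i e^{-a_i/\varepsilon})\le a_1\wedge\cdots\wedge a_n$, which also delivers the claimed uniformity. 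For $F_{\Gam,\tBeta}(x,y)=(x+y,x/y)$ this immediately produces $(x\wedge y,\,x-y)=F_{\E,\AL}(x,y)$, and for $F_{\tBeta,\tBeta}$, after writing $1=e^{0}$, the two coordinates yield $(0\wedge x)-y$ and $(0\wedge x\wedge y)-x-y$, i.e.\ $F_{\AL,\AL}$. The identity $(F_{\Gam,\tBeta}^{-1})^\z=F_{\E,\AL}^{-1}$ may be checked by the same substitution, or deduced from the compatibility $(f\circ g)^\z=f^\z\circ g^\z$ noted before Proposition \ref{p34}, applied to $F_{\Gam,\tBeta}^{-1}\circ F_{\Gam,\tBeta}=\mathrm{Id}$ together with $F_{\Gam,\tBeta}^\z=F_{\E,\AL}$.

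For part (b), I would first observe that $S_\varepsilon^{-1}$ acts coordinatewise, so the push-forward of a product measure is the product of the push-forwards, and weak convergence of a product follows from weak convergence of each marginal; it therefore suffices to analyse each one-dimensional marginal. The key step is the change of variables $w=-\varepsilon\log x$ (so $x=e^{-w/\varepsilon}$ and $|dx/dw|=\varepsilon^{-1}e^{-w/\varepsilon}$). For the beta-prime marginal $\tBeta(\varepsilon a,\varepsilon b)$, with density proportional to $x^{\varepsilon a-1}(1+x)^{-\varepsilon(a+b)}$, this yields a density for $w$ equal to
\[
\frac{1}{\varepsilon B(\varepsilon a,\varepsilon b)}\,e^{-aw}\bigl(1+e^{-w/\varepsilon}\bigr)^{-\varepsilon(a+b)},
\]
and for the gamma marginal $\Gam(\varepsilon\rho,e^{\tau/\varepsilon})$ (in the rate parameterisation of Appendix \ref{appa}) a density equal to $\varepsilon^{-1}\Gamma(\varepsilon\rho)^{-1}e^{\rho\tau}\,e^{-\rho w}\exp\!\bigl(-e^{(\tau-w)/\varepsilon}\bigr)$. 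Using $\Gamma(\varepsilon c)\sim(\varepsilon c)^{-1}$ as $\varepsilon\downarrow0$, the normalising factor $\varepsilon^{-1}B(\varepsilon a,\varepsilon b)^{-1}$ converges to $ab/(a+b)$ and $\varepsilon^{-1}\Gamma(\varepsilon\rho)^{-1}$ converges to $\rho$, while the remaining exponential factors are controlled by the same principle, giving $(1+e^{-w/\varepsilon})^{-\varepsilon(a+b)}\to e^{(a+b)(0\wedge w)}$ and $\exp(-e^{(\tau-w)/\varepsilon})\to\mathbf{1}_{\{w>\tau\}}$. The pointwise limits are then exactly the density of $\AL(a,b)$ (namely $\tfrac{ab}{a+b}e^{-aw}$ for $w>0$ and $\tfrac{ab}{a+b}e^{bw}$ for $w<0$) and of $\mathrm{sExp}(\rho,\tau)$ (namely $\rho\,e^{-\rho(w-\tau)}\mathbf{1}_{\{w>\tau\}}$); applying this coordinatewise with the parameters listed in (b)(i) and (b)(ii) gives both displayed limits.

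The final, routine step is to upgrade pointwise convergence of densities to convergence in distribution, which is where the only real care is needed. I would invoke Scheff\'e's lemma: since each limiting object is itself a probability density, a.e.\ convergence of the densities forces $L^1$ convergence and hence weak convergence of the marginals, with the transition points ($w=0$ in the beta-prime case, $w=\tau$ in the gamma case) being Lebesgue-null and thus harmless. I expect the main obstacle to be purely organisational---keeping the $\Gamma$- and $B$-function asymptotics and the sign conventions in the parameterisations of $\AL$ and $\mathrm{sExp}$ consistent across all four (respectively, four) marginals---rather than any genuine analytic difficulty.
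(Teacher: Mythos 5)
Your proposal is correct and follows essentially the same route as the paper: part (a) by direct substitution using the uniform log-sum-exp limit (which the paper omits as ``straightforward''), and part (b) by a coordinatewise change of variables $w=-\varepsilon\log x$ on the densities together with the asymptotics $\Gamma(\varepsilon c)\sim(\varepsilon c)^{-1}$, exactly as in the paper's Proposition \ref{ptmeas}-style argument for the beta-prime marginal and its citation of \cite[Proposition 5.1(b)]{CS2} for the gamma marginal. The only difference is that you prove the gamma limit yourself and make the density-to-weak-convergence step explicit via Scheff\'e's lemma, which the paper leaves implicit.
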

\begin{proof} The proof of part (a), which is straightforward, is omitted. Part (b) depends on two basic convergence results. The convergence of $S_\varepsilon^{-1}(\tBeta(\varepsilon \rho,\varepsilon \sigma))$ can be checked by a simple change of variables, similarly to the proof of Proposition \ref{ptmeas}. As for the convergence of $S_\varepsilon^{-1}(\Gam(\varepsilon \rho,e^{\tau/\varepsilon}))$, see \cite[Proposition 5.1(b)]{CS2}.
\end{proof}

As per the comments around \eqref{l1}-\eqref{m2}, together with Proposition \ref{gb}, the above result can be used to deduce that the limiting measures in parts (b)(i) and (b)(ii) are solutions of the detailed balance equation for $F_{\AL,\AL}$ and $F_{\E,\AL}$, respectively (cf.\ Proposition \ref{gbz}). In the same way, one can also make a statement about the detailed balance solutions of $F_{\E,\AL}^{-1}$. Note that this procedure misses the discrete solutions of the zero-temperature detailed balance equations.

\begin{rem}\label{ptztrem}
Either directly or by applying Corollary \ref{c32}, Proposition \ref{p34} and Proposition \ref{ztlmapmeas} (and equation \eqref{tildereq}), it is possible to obtain the continuous zero-temperature random polymer stationary measures of Theorem \ref{t45} from the positive-temperature ones of Theorem \ref{t42}. Although we omit details here, we note that the truncated asymmetric Laplace distribution emerges in the following way:
\begin{eqnarray*}
\lim_{\varepsilon\rightarrow 0}S_\varepsilon^{-1}\left(\mathrm{Beta}(\varepsilon \rho,\varepsilon \sigma)\right)&=&
\lim_{\varepsilon\rightarrow 0}S_\varepsilon^{-1}\circ \left(Q^{-1}\right)\left(\tBeta(\varepsilon \sigma,\varepsilon \rho)\right)\\
&=&\left(Q^{-1}\right)^\z\left(\lim_{\varepsilon\rightarrow 0}S_\varepsilon^{-1}\left(\tBeta(\varepsilon \sigma,\varepsilon \rho)\right)\right)\\
&=&\left(Q^{-1}\right)^\z \left(\AL(\sigma,\rho)\right)\\
&=&\AL(\rho,\sigma)\vee 0.
\end{eqnarray*}
Similarly to Remark \ref{ptptrem}, the connections between maps and stationary measures also allow us to relate the partition functions of the positive-temperature models to those of the zero-temperature models, and thereby recover known zero-temperature limits for each of the inverse-gamma, gamma, inverse-beta and beta random polymers. For further background, one can find discussion of zero-temperature limits relating to the inverse-beta, the gamma, and inverse-gamma polymers in \cite{Thiery}, and for the beta polymer in \cite[Section 4]{BC}.
\end{rem}

\subsection{Relationships between zero-temperature models}\label{ztsec}

The results corresponding to Propositions \ref{ptmap} and \ref{ptmeas} in the zero-temperature case are the following. Since the proofs of both are straightforward, they are omitted.

\begin{prop}\label{ztmap} On $(0,\infty)\times\mathbb{R}$, it holds that
\[\pi\circ\left(I^\z\times  I^\z_d\right)\circ F_{\AL,\AL}\circ \left(I^\z\times I^\z\right)=F_{\E,\AL},\]
and also
\[\left(I^\z\times I^\z\right)\circ F_{\AL,\AL}\circ\left(I^\z\times  I^\z_d\right)\circ\pi=F_{\E,\AL}^{-1}.\]
\end{prop}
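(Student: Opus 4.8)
The plan is to verify both identities by direct substitution of the explicit formulas for $F_{\AL,\AL}$, $F_{\E,\AL}$ and $F_{\E,\AL}^{-1}$, together with the scale changes $I^\z(x)=-x$, $I^\z_d(x)=x$ and $\pi(x,y)=(y,x)$, working from the innermost composition outward and simplifying the resulting expressions by elementary $\min/\max$ arithmetic. The only facts needed beyond bookkeeping are the relations $0\wedge(-x)=-(0\vee x)$ and $0\wedge(-x)\wedge(-y)=-(0\vee x\vee y)$, the distributive identity $x+y=(x\vee y)+(x\wedge y)$, and the observation that the restriction to $(0,\infty)\times\mathbb{R}$ is exactly what permits replacing $0\vee x$ by $x$ and $0\vee x\vee y$ by $x\vee y$.

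For the first identity I would fix $(x,y)$ with $x>0$ and compute inside-out. Applying $I^\z\times I^\z$ sends $(x,y)$ to $(-x,-y)$, and then $F_{\AL,\AL}$ produces the pair with coordinates $y-(0\vee x)$ and $x+y-(0\vee x\vee y)$. Acting by $I^\z\times I^\z_d$ (which negates only the first entry) and then transposing by $\pi$ yields $\bigl(x+y-(0\vee x\vee y),\,(0\vee x)-y\bigr)$. Since $x>0$ gives $0\vee x=x$ and $0\vee x\vee y=x\vee y$, the first coordinate collapses to $x+y-(x\vee y)=x\wedge y$ and the second to $x-y$, so the composition equals $(x\wedge y,\,x-y)=F_{\E,\AL}(x,y)$, as claimed.

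For the second identity I would run the analogous inside-out computation: $\pi$ followed by $I^\z\times I^\z_d$ sends $(x,y)$ to $(-y,x)$, and applying $F_{\AL,\AL}$ and then negating both coordinates via $I^\z\times I^\z$ gives $\bigl(x+(0\vee y),\,x-y+(0\vee y\vee(-x))\bigr)$. The hypothesis $x>0$ forces $-x<0\le 0\vee y$, so $0\vee y\vee(-x)=0\vee y$ and, using $y-(0\vee y)=0\wedge y$, the second coordinate becomes $x-y+(0\vee y)=x-(0\wedge y)$; similarly $x+(0\vee y)=x+y-(0\wedge y)$. Thus the composition equals $\bigl(x+y-(0\wedge y),\,x-(0\wedge y)\bigr)=F_{\E,\AL}^{-1}(x,y)$. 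As a consistency check one can note that $I^\z$, $I^\z_d$, $\pi$ and $F_{\AL,\AL}$ are all involutions, so the second composition is the inverse of the first and the second identity follows formally from the first by inversion; I would nonetheless keep the direct computation, since the inversion argument delivers the identity only on the image $F_{\E,\AL}\bigl((0,\infty)\times\mathbb{R}\bigr)$, and identifying this with the stated domain $(0,\infty)\times\mathbb{R}$ requires the same $\min/\max$ simplification anyway.

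I do not expect a serious obstacle: once the order of composition is fixed the manipulations are routine. The single point I would flag explicitly is the role of the constraint $x>0$, which is precisely what makes the two compositions agree with $F_{\E,\AL}$ and $F_{\E,\AL}^{-1}$ exactly rather than only up to the extra $\max$-terms that survive off this domain (a quick check at, say, $x<0$ shows the identities genuinely fail there). This restriction is the zero-temperature shadow of $I$ being a bijection of $(0,\infty)$ in the positive-temperature statement of Proposition \ref{ptmap}.
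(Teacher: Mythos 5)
Your direct inside-out computation is correct, and it is exactly the "straightforward" verification that the paper itself omits (the paper states the proofs of Propositions \ref{ztmap} and \ref{ztmeas} are straightforward and gives no details). Both of your intermediate formulas, $\bigl(x+y-(0\vee x\vee y),\,(0\vee x)-y\bigr)$ and $\bigl(x+(0\vee y),\,x-y+(0\vee y\vee(-x))\bigr)$, and the subsequent simplifications using $x>0$ check out, so there is nothing to add.
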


\begin{prop}\label{ztmeas}
For $\rho,\sigma\in(0,\infty)$, it holds that
\[\lim_{\tau\rightarrow\infty}\left(I^\z\times I^\z\right)\left(\mathrm{AL}(\tau,\rho)\times\mathrm{AL}(\tau+\rho,\sigma)\right)=\mathrm{Exp}(\rho)\times \mathrm{Exp}(\sigma),\]
\[\lim_{\tau\rightarrow\infty}\pi\circ\left(I^\z\times  I^\z_d\right)\left(\mathrm{AL}(\sigma,\rho)\times \mathrm{AL}(\rho+\sigma,\tau)\right)=\mathrm{Exp}(\rho+\sigma)\times \AL(\rho,\sigma).\]
Also, with $(q,r)=(P,Q)$ for some $(P,Q)\in(0,1)$, it holds that
\[\lim_{p\rightarrow1}\left(I^\z\times I^\z\right)\left( \mathrm{sdAL}(p,q,m)\times \mathrm{sdAL}(pq,r,m)\right)= \mathrm{sGeo}(P,m)\times\mathrm{sGeo}(Q,m),\]
\[\lim_{p\rightarrow 1}\pi\circ\left(I^\z\times  I^\z_d\right)\left(\mathrm{sdAL}(r,q,m)\times\mathrm{sdAL}(qr,p,m)\right)=\mathrm{sGeo}(PQ,m)\times \mathrm{sdAL}(P,Q,m).\]
\end{prop}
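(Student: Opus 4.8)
The plan is to treat all four convergence statements by a single elementary scheme. Each left-hand side is the push-forward of a \emph{product} measure under a fixed coordinatewise map assembled from $I^\z(x)=-x$, $I^\z_d(x)=x$ and the transposition $\pi$, so it suffices to establish the relevant one-dimensional weak limit for each marginal separately and then combine them using independence. First I would record the densities (respectively mass functions) of the asymmetric Laplace and exponential (respectively $\mathrm{sdAL}$ and $\mathrm{sGeo}$) families from Appendix~\ref{appa}, together with the reflection identity $-\AL(\rho,\sigma)=\AL(\sigma,\rho)$ and its lattice analogue $-\mathrm{sdAL}(p,q,m)=\mathrm{sdAL}(q,p,m)$, so that each occurrence of $I^\z$ reduces to a relabelling of parameters.

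For the two continuous statements the only genuine input is the single-variable limit $\AL(\sigma,\tau)\to\mathrm{Exp}(\sigma)$ as $\tau\to\infty$. This I would prove directly from the asymmetric Laplace density, which is proportional to $e^{-\sigma x}$ on $[0,\infty)$ and to $e^{\tau x}$ on $(-\infty,0)$: the negative half-line carries total mass proportional to $\sigma/(\sigma+\tau)\to 0$ and concentrates at the origin, while the conditional law on $[0,\infty)$ is exactly $\mathrm{Exp}(\sigma)$, independently of $\tau$. Using the reflection identity, the first display now follows coordinatewise, since the $I^\z$-images $\AL(\rho,\tau)$ and $\AL(\sigma,\tau+\rho)$ both have diverging second parameter and hence tend to $\mathrm{Exp}(\rho)$ and $\mathrm{Exp}(\sigma)$. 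For the second display I would check that $\pi\circ(I^\z\times I^\z_d)$ sends $\AL(\sigma,\rho)\times\AL(\rho+\sigma,\tau)$ to the law of $(\AL(\rho+\sigma,\tau),\AL(\rho,\sigma))$, whose first marginal tends to $\mathrm{Exp}(\rho+\sigma)$ by the above, while the second marginal does not involve $\tau$ and so remains $\AL(\rho,\sigma)$.

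The two discrete statements are handled in precisely the same way, the lattice computation mirroring the continuous one with $p\to 1$ playing the role of $\tau\to\infty$. Using the mass function of $\mathrm{sdAL}$ from Appendix~\ref{appa}, this limit forces one of the two geometric tails to lose all of its mass, so that after applying $I^\z$ the surviving tail converges to the stated $\mathrm{sGeo}$ law, while the coordinate unaffected by $p$ is unchanged; independence then gives the product limit. The appearance of the product $pq$ in place of the sum $\tau+\rho$ simply reflects that geometric tail parameters compose multiplicatively where exponential rates compose additively, which is what makes the degeneration propagate to the second marginal exactly as in the continuous case.

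I do not expect any conceptual obstacle: this is the zero-temperature counterpart of Proposition~\ref{ptmeas}, and the subsequent passage from convergence of these measures to detailed balance solutions is identical to the argument around \eqref{l1}--\eqref{m2}. The only step requiring care is bookkeeping, namely matching the parametrisation of Appendix~\ref{appa} to the continuous one so as to identify correctly which tail degenerates and to confirm that the normalising constants behave so that the surviving conditional law is genuinely $\mathrm{sGeo}$ rather than a spread-out or escaping geometric. Since this is routine, it explains why the authors omit the proof.
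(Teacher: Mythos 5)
Your treatment of the two continuous displays is correct and is surely the intended argument behind the paper's ``straightforward, omitted'' proof: the one-dimensional limit $\AL(\sigma,\tau)\to\mathrm{Exp}(\sigma)$ as $\tau\to\infty$ (the negative half-line carries mass $\sigma/(\sigma+\tau)\to0$ while the conditional law on $[0,\infty)$ is exactly $\mathrm{Exp}(\sigma)$), combined with the reflection $-\AL(\rho,\sigma)=\AL(\sigma,\rho)$ and coordinatewise convergence of product measures, settles both statements.

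The discrete half, however, has a genuine gap, and it sits exactly in the ``bookkeeping'' you defer as routine. The two facts you invoke simultaneously --- that $p\to1$ forces one geometric tail to lose its mass, and that the discrete parameters compose multiplicatively (your explanation for $pq$ playing the role of $\tau+\rho$) --- are incompatible under any single reading of the parametrisation. If you take Appendix \ref{appa} literally, so that $\mathrm{sdAL}(a,b,m)$ has masses proportional to $(1-a)^x$ on $x\geq 0$ and $(1-b)^{-x}$ on $x\leq -1$, then in the first discrete display the second factor transforms as $I^\z\left(\mathrm{sdAL}(pq,r,m)\right)=\mathrm{sdAL}(r,pq,m)$, whose negative-tail parameter satisfies $pq\to q=P\neq 1$ as $p\to1$; its limit is the two-sided law $\mathrm{sdAL}(Q,P,m)$, \emph{not} $\mathrm{sGeo}(Q,m)$, so under this convention the degeneration does not propagate to the second marginal and the display cannot be proved as stated. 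If instead you adopt the convention under which parameters do compose multiplicatively --- $\mathrm{sdAL}(\theta_1,\theta_2,m)$ with masses proportional to $\theta_1^{x}$ and $\theta_2^{-x}$, which is the convention actually forced on Proposition \ref{gbz}, since there the minimum of $\mathrm{ssGeo}(p,M,m)$ and $\mathrm{ssGeo}(q,M,m)$ is asserted to be $\mathrm{ssGeo}(pq,M,m)$ and minima of geometrics multiply tail \emph{ratios}, not the quantities $1-\theta$ --- then $p\to1$ sends the relevant tail ratio to $1$, the corresponding partial sum $\sum_{k\geq0}p^{k}=(1-p)^{-1}$ diverges, and the pushed-forward measures escape to infinity rather than converge; the collapse you want happens as $p\to0$ (the discrete counterpart of $\tau\to\infty$, via $\theta=e^{-\tau m}$). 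A correct write-up must therefore fix the multiplicative parametrisation consistent with Proposition \ref{gbz} and read the limit as $p\to0$; with that repair your coordinatewise scheme does go through verbatim (both negative-tail ratios $p$ and $pq$ then vanish, giving $\mathrm{sGeo}(P,m)$ and $\mathrm{sGeo}(Q,m)$, and similarly for the fourth display). As written, with $p\to1$ and multiplicative composition asserted together, your discrete argument cannot be completed.
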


Again, similarly to the comments around \eqref{l1}-\eqref{m2}, together with Proposition \ref{gbz}(b), Propositions \ref{ztmap} and \ref{ztmeas} imply that the limiting measures are solutions of the detailed balance equation for $F_{\E,\AL}$. Moreover, one can make a corresponding statement about the detailed balance solutions of $F_{\E,\AL}^{-1}$. Note that this procedure misses the shifts permitted in the distributions of Proposition \ref{gbz}(a). Hence, unlike the other links shown in Figure \ref{fig2}, which all preserve the three distributional parameters of the detailed balance solutions in question, the limits from $F_{\AL,\AL}$ to $F_{\E,\AL}$ or its inverse $F_{\E,\AL}^{-1}$ see a reduction from three to two parameters (cf.\ comment below Figure \ref{fig1}, and also \cite[Sections V.B.1,2]{Thiery}).

\subsubsection{Relations between zero-temperature random polymer models}

We now consider how the discussion of Subsection \ref{ptrem} applies in the zero-temperature case. Firstly, in view of Corollary \ref{c54}, one conjectures part (a) and (b) of the following statement. Moreover, from \eqref{tildereq} and \eqref{eqr}, one reasonably anticipates part (c). Since a direct proof of the claims is straightforward, we omit it.

\begin{prop}\label{ppp1}
(a) On $(-\infty,0]\times \mathbb{R}\times (-\infty,0]$, it holds that $R^\z_{(1,1)}=R^\z_{(0,1)}$.\\
(b) On $[0,\infty)\times \mathbb{R}\times (-\infty,0]$, it holds that $R^\z_{(1,1)}=R^\z_{(1,0)}$.\\
(c) On $(-\infty,0]\times [0,\infty)\times (-\infty,0]$, it holds that $\tilde{R}^\z_{(1,-1)}=R^\z_{(1,0)}\circ((Q^{-1})^\z\times I_d^\z\times I_d^\z)$.
\end{prop}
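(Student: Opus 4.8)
The plan is to establish all three identities by direct substitution, exploiting the fact that on each prescribed domain the piecewise-linear maps lose their kinks and reduce to affine expressions. The only sources of nonlinearity in the relevant formulas are the terms $\min\{a,0\}$ and $\max\{a,0\}$ (and, in part (c), the map $(Q^{-1})^\z(a)=-\min\{a,0\}$), each of which becomes linear once the sign of $a$ is fixed. Since the domains are exactly the regions on which $a$ has a definite sign, a single case split on $a$ will suffice in each part.

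For parts (a) and (b), I would substitute the explicit formula
\[R^{\z}_{(1,1)}(a,b,c)=\left(\min\{a,\min\{a,0\}+b-c\},\min\{a+c-b,\min\{a,0\}\}\right)\]
and split on the sign of $a$. On the domain of (a), where $a<0$, we have $\min\{a,0\}=a$, so the two coordinates collapse to $\min\{a,a+b-c\}$ and $\min\{a+c-b,a\}$, which is precisely $R^\z_{(0,1)}(a,b,c)$. On the domain of (b), where $a>0$, we instead have $\min\{a,0\}=0$, so the coordinates become $\min\{a,b-c\}$ and $\min\{a+c-b,0\}$, matching $R^\z_{(1,0)}(a,b,c)$ term by term. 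In both cases the constraint $c<0$ is needed only so that $R^\z_{(1,1)}$ (and, in (b), $R^\z_{(1,0)}$) is defined.

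For part (c), the only additional bookkeeping is the change of variables. On $a<0$ one has $(Q^{-1})^\z(a)=-a>0$ while $I_d^\z$ is the identity, so $((Q^{-1})^\z\times I_d^\z\times I_d^\z)(a,b,c)=(-a,b,c)$; I would first note that this lands in $\R\times\R\times(-\infty,0)$, the domain on which $R^\z_{(1,0)}$ is defined, since $c<0$ is preserved. Evaluating the right-hand side gives $R^\z_{(1,0)}(-a,b,c)=\left(\min\{-a,b-c\},\min\{-a+c-b,0\}\right)$. On the other hand, for $a<0$ we have $-\min\{0,a\}=-a$ and $\max\{a,0\}=0$, and inserting these into the formula for $\tilde{R}^\z_{(1,-1)}$ yields exactly $\left(\min\{-a,b-c\},\min\{-a+c-b,0\}\right)$. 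Comparing the two coordinates completes the identity.

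As for difficulties, I do not anticipate any genuine obstacle: each part is a finite sign-case verification, and the domains have been chosen precisely so that one sign of $a$ trivialises every kink. The one point requiring mild care is the composition in (c), where one must confirm that $(Q^{-1})^\z$ sends the negative half-line into the positive half-line, so that the resulting first argument of $R^\z_{(1,0)}$ is positive and the cancellation of $-\min\{0,a\}=-a$ against the sign flip introduced by $(Q^{-1})^\z$ is correctly accounted for.
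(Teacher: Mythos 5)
Your proposal is correct, and it is exactly the direct case-by-case verification that the paper has in mind (the paper itself omits the proof, remarking only that ``a direct proof of the claims is straightforward''). All three sign computations, including the cancellation $(Q^{-1})^\z(a)=-a$ against $-\min\{0,a\}=-a$ in part (c) and the domain bookkeeping, check out against the paper's formulas for $R^\z_{(0,1)}$, $R^\z_{(1,0)}$, $R^\z_{(1,1)}$ and $\tilde{R}^\z_{(1,-1)}$.
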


Towards connecting the stationary measures of the maps in question, we have the following statements. (As in the previous subsection, the limiting procedure does not yield the shifted exponential stationary solutions as limits.) Similar statements are possible for the discrete measures, but are omitted for brevity.

\begin{prop}\label{ppp2}
(a) For $\sigma,\tau > 0$, as $\rho\rightarrow \infty$:
\[\left(\mathrm{AL}(\rho,\sigma+\tau),\mathrm{AL}(\rho+\sigma,\tau),\mathrm{AL}(\rho,\sigma)\wedge 0\right)\rightarrow
\left(-\mathrm{Exp}(\sigma+\tau),-\mathrm{Exp}(\tau),-\mathrm{Exp}(\sigma)\right),\]
where we note that the limit measure is supported on $(-\infty,0]^3$.\\
(b) For $\rho,\sigma > 0$, as $\tau\rightarrow \infty$:
\[\left(\mathrm{AL}(\rho,\sigma+\tau),\mathrm{AL}(\rho+\sigma,\tau),\mathrm{AL}(\rho,\sigma)\wedge 0\right)\rightarrow
\left(\mathrm{Exp}(\rho),\mathrm{Exp}(\rho+\sigma),\AL(\rho,\sigma)\wedge 0\right),\]
where we note that the limit measure is supported on $[0,\infty)^2\times (-\infty,0]$.\\
(c) For $\rho,\sigma > 0$, as $\tau\rightarrow \infty$:
\begin{align*}
&\left((Q^{-1})^\z(\mathrm{AL}(\tau,\sigma)),\mathrm{AL}(\rho+\sigma,\tau)\vee 0, \mathrm{AL}(\sigma,\rho)\wedge0\right)\\
&\qquad\qquad\qquad\qquad\qquad\qquad\rightarrow
\left(\mathrm{Exp}(\sigma),\mathrm{Exp}(\rho+\sigma),\AL(\sigma,\rho)\wedge 0\right),
\end{align*}
where we note that the limit measure is supported on $[0,\infty)^2\times (-\infty,0]$.
\end{prop}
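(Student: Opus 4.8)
The plan is to deduce all three parts from two elementary weak limits for the asymmetric Laplace family, together with the continuous mapping theorem and the fact that weak convergence of product measures follows from weak convergence of their marginals. Each triplet in the statement is a product of three independent marginals, and each limiting triplet is again a product, so it suffices to prove the claimed convergence coordinate by coordinate; the joint convergence is then automatic.

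First I would record the two base limits. Writing the density of $\AL(\rho,\sigma)$ (see Appendix \ref{appa}) as
\[
f_{\rho,\sigma}(x)=\frac{\rho\sigma}{\rho+\sigma}
\begin{cases}
e^{-\rho x}, & x\ge 0,\\
e^{\sigma x}, & x<0,
\end{cases}
\]
one sees that, for fixed $\sigma$, as $\rho\rightarrow\infty$ we have $f_{\rho,\sigma}(x)\rightarrow \sigma e^{\sigma x}\mathbf{1}_{\{x<0\}}$ for almost every $x$, the right-hand side being the density of $-\mathrm{Exp}(\sigma)$. Since both sides integrate to $1$, Scheff\'{e}'s lemma yields $\AL(\rho,\sigma)\Rightarrow -\mathrm{Exp}(\sigma)$. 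The identical computation (or the symmetry $-\AL(\rho,\sigma)=\AL(\sigma,\rho)$) shows that, for fixed $\rho$, $\AL(\rho,\sigma)\Rightarrow \mathrm{Exp}(\rho)$ as $\sigma\rightarrow\infty$. In words: sending the first (respectively second) parameter of $\AL$ to infinity collapses the positive (respectively negative) half-line, leaving a one-sided exponential with the surviving rate.

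I would then apply these limits coordinatewise, pushing each through the relevant truncation. The maps $x\mapsto x\wedge 0$ and $x\mapsto x\vee 0$, and likewise $(Q^{-1})^\z(x)=-(x\wedge 0)$, are continuous on all of $\R$, so the continuous mapping theorem applies with no side condition. For part (a) ($\rho\rightarrow\infty$) the first two marginals tend to $-\mathrm{Exp}(\sigma+\tau)$ and $-\mathrm{Exp}(\tau)$ by the first base limit, while $\AL(\rho,\sigma)\wedge 0\Rightarrow(-\mathrm{Exp}(\sigma))\wedge 0=-\mathrm{Exp}(\sigma)$ since $-\mathrm{Exp}(\sigma)$ is supported on $(-\infty,0)$. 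For part (b) ($\tau\rightarrow\infty$) the first two marginals tend to $\mathrm{Exp}(\rho)$ and $\mathrm{Exp}(\rho+\sigma)$ by the second base limit, and the third, $\AL(\rho,\sigma)\wedge 0$, is $\tau$-independent and converges to itself. For part (c) ($\tau\rightarrow\infty$) the first base limit gives $\AL(\tau,\sigma)\Rightarrow-\mathrm{Exp}(\sigma)$, hence $(Q^{-1})^\z(\AL(\tau,\sigma))\Rightarrow-((-\mathrm{Exp}(\sigma))\wedge 0)=\mathrm{Exp}(\sigma)$; the second marginal obeys $\AL(\rho+\sigma,\tau)\vee 0\Rightarrow\mathrm{Exp}(\rho+\sigma)\vee 0=\mathrm{Exp}(\rho+\sigma)$; and the third is again $\tau$-independent.

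The only point that looks at all delicate is the fate of the atom at $0$ that the truncations create: $\AL(\rho+\sigma,\tau)\vee 0$ carries an atom at $0$ of mass $(\rho+\sigma)/(\rho+\sigma+\tau)\rightarrow0$, and $(Q^{-1})^\z(\AL(\tau,\sigma))$ an atom of mass $\sigma/(\tau+\sigma)\rightarrow0$. Because the truncation maps are globally continuous, however, the continuous mapping theorem absorbs these cases directly, so there is in fact no genuine obstacle, and the whole argument reduces to bookkeeping once the two base limits are established. (As anticipated by Proposition \ref{ppp1}, the resulting limit triplets are among the stationary families recorded in Theorem \ref{t45}.)
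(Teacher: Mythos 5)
Your proof is correct. The paper offers no proof of this proposition at all (it is treated as a straightforward verification, just like Propositions \ref{ztmap} and \ref{ztmeas} before it), and your argument — the two base weak limits for the $\AL$ family obtained via Scheff\'e's lemma (or the symmetry $-\AL(\rho,\sigma)=\AL(\sigma,\rho)$), followed by the continuous mapping theorem applied to the globally continuous truncation maps $x\mapsto x\wedge 0$, $x\mapsto x\vee 0$, $(Q^{-1})^\z$, and coordinatewise convergence of the independent marginals — is exactly the routine check the authors leave to the reader, with the vanishing atoms at $0$ correctly accounted for.
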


\begin{rem}\label{ztrem} (a) As in Remark \ref{ptptrem}, the links between maps and stationary measures of Propositions \ref{ppp1} and \ref{ppp2} allow us to connect the partition functions of the models in question. We note that the divergence of parameters corresponds with the positive-temperature results discussed in Remark \ref{ptptrem}(a)-(c). (In case (c), is it useful to note that on $(-\infty,0)$, the map $(Q^{-1})^\z$ is equal to the linear map $I^\z=-I_d^\z$.) Results of a similar nature connecting the Bernoulli-exponential polymer (that is, the zero-temperature limit of the inverse-beta polymer) or its discrete counterpart, the Bernoulli-geometric polymer, to exponential/geometric FPP/LPP have previously been described in \cite{Thiery}. We believe the conclusion for the Bernoulli-exponential/geometric FPP (that is, the zero-temperature limit of the beta polymer) that links this to the exponential/geometric FPP model, is new. Thus we choose to describe this explicitly: let $(Z^\tau_{n,m})_{n,m \in \mathbb{Z}_+}$ be the partition function of the zero-temperature random polymer corresponding to $\tilde{R}_{(1,-1)}$ with $(X_{n,m})_{n,m \in \mathbb{N}}$ being i.i.d.\ $\mathrm{AL}(\tau,\sigma)$ random variables, and boundary condition being given by independent random variables $(U_{n,0})_{ n \in \N}$ and $(V_{0,n})_{n \in \N}$ with marginal distributions $\mathrm{AL}(\rho+\tau,\sigma)$ and $\mathrm{AL}(\sigma,\rho)\wedge 0$, respectively, and $Z^\tau_{0,0}=0$, then
\[\left(Z^\tau_{n,m}\right)_{n,m \in \mathbb{Z}_+}\rightarrow \left(Z_{n,m}\right)_{n,m \in \mathbb{Z}_+}\]
in distribution as $\tau\rightarrow \infty$, where $(Z_{n,m})_{n,m \in \mathbb{Z}_+}$ is the partition function of the zero-temperature model corresponding to $R_{(0,1)}$, as defined from the limiting measures $\tilde{\mu}=\mathrm{Exp}(\sigma)$, $\mu=\mathrm{Exp}(\rho+\sigma)$ and $\nu=\mathrm{AL}(\sigma,\rho)\wedge 0$. As in the positive-temperature case, it does not seem possible to relate the zero-temperature limits of the beta and inverse-gamma models (i.e.\ Bernoulli-exponential/geometric FPP and exponential/geometric LPP, respectively) in a similarly convenient way.\\
(b) For each of the four zero-temperature models (i.e.\ $R^\z_{(0,1)}$, $R^\z_{(1,0)}$, $R^\z_{(1,1)}$, $\tilde{R}^\z_{(1,-1)}$), it is possible to show that the continuous stationary measures arise as limits of discrete versions of the stationary measures, and it follows that the corresponding partition functions also converge (cf. \cite{Thiery}).
\end{rem}

\section{Open problems/conjectures}\label{sec6}

We finish by listing a number of open problems and conjectures that arise from our study.

\begin{enumerate}
\item As we noted in the introduction, the discrete and ultra-discrete Toda lattice equations are related to $R_{(0,1)}$ and $R^\z_{(0,1)}$, respectively. Thus they are also connected to $F_{\Gam,\tBeta}$ and $F_{\E,\AL}$. Are there any natural discrete integrable systems linked to $F_{\tBeta,\tBeta}$ and $F_{\AL,\AL}$?
\item The characterisations of solutions to the detailed balance condition for the basic bijections $F_{\Gam,\tBeta}$, $F_{\tBeta,\tBeta}$ and $F_{\E,\AL}$ are complete (recall Propositions \ref{gb} and \ref{gbz}(a)). We conjecture that all the non-trivial solutions to the detailed balance condition for $F_{\AL,\AL}$ are given in the statement of Proposition \ref{gbz}(b). We note that, amongst absolutely continuous distributions satisfying certain technical conditions, this has recently been shown to be the case in \cite{BaoN}.
\item The characterisation of stationary solutions for the four positive-temperature random polymer models considered in this article is complete (recall Theorem \ref{t42}). In the zero-temperature case, it is only for $R_{(0,1)}^{\z}$ that we are able to make the same claim. Apart from the issue raised in the previous question, we also have the problem that the changes of scale in Proposition \ref{p34} are not bijective (for maps other than $R_{(0,1)}^{\z}$). Consequently, even establishing the conjecture concerning Proposition \ref{gbz}(b) will not be enough to complete the story for $R_{(1,0)}^{\z}$, $R_{(1,1)}^{\z}$ and $\tilde{R}_{(1,-1)}^{\z}$. Whilst it is tempting to conjecture that for these models, there are no other non-trivial stationary solutions than those given in Theorem \ref{t45}(b),(c),(d), this is apparently not the case. Indeed, \cite[Lemma 2.4]{CG18} yields another stationary measure for $R_{(1,0)}^{\z}$ given by, for some parameters $0<p<u\leq 1$, \[\tilde{\mu}=-\mathrm{Ber}(p),\qquad\mu=-\mathrm{Ber}(u),\qquad\nu=-\mathrm{Geo}\left(\frac{u-p}{u(1-p)}\right),\]
    where $\mathrm{Ber}$ represents a Bernoulli distribution and $\mathrm{Geo}$ a geometric one, that does not appear to correspond to a detailed balance solution for $F_{\E,\AL}$.
\item Related to the previous question, there are polymer models in the literature that do not fit into our framework, such as the discrete Hammersley process studied in \cite{CG19}. If one is able to develop general techniques for characterising the stationary solutions of maps of the form $R_{(\alpha,\beta)}^{\z}$, then it would be of interest to explore whether these apply in other settings.
\item As discussed in Section \ref{sec5}, connections between partition functions of random polymer models have been established in all the ways shown by the arrows on Figure \ref{fig1}. A notable omission here is a link between the partition functions of the beta and inverse-gamma models, in both the positive- and zero-temperature cases. (This point was also discussed in \cite[Section III.B]{TD}.) Even though we do not see such ourselves, we wonder whether there are any implications in this direction of the non-linear relationship between the maps and stationary measures of the two models made precise in Propositions \ref{p53}(d) and \ref{p55}(d).
\item A stronger form of stationarity that is known to hold for the positive-temperature beta-gamma models is `Burke's property', see \cite[Theorem 3.3]{S} for an explicit example and the discussion after \cite[Definition 2.2]{CN} for a more general comment. For such models, Burke's property can be understood as a consequence of the equivalence between the stationary measures for $R_{(\a,\b)}=\overline{F}_{(\a,\b)}^{(2,3)}$ and those that are invariant under $\overline{F}_{(\a,\b)}$ (i.e.\ Proposition \ref{twothree}). The fact that one can not express  $R_{(\a,\b)}^{\z}$ in the same way in general means that further work is required to check whether the stationary measures we present in the zero-temperature case satisfy a version of Burke's property. Note we do know that $R_{(0,1)}^{\z}$ has a characterisation as $(\overline{F}_{(0,1)}^\z)^{(2,3)}$, and so in this case we can recover the Burke's property of \cite{BCS}. On the other hand, whilst we do not have such a description for $R_{(1,0)}^{\z}$, it is known that the stationary measure for $R_{(1,0)}^{\z}$ of \cite{CG18} nonetheless satisfies a version of Burke's property.
\item For random polymer models, there are different notions of solvability. As well the computability of stationary measures, these include Bethe ansatz integrability and presence of the Robinson-Schensted-Knuth correspondence. A summary of what is known for some of the models considered here appears in \cite[Figure 1]{Thiery}. One is naturally led to consider whether any of the connections between models described in this article offer insight into links between other solvability properties. Moreover, an important aspect of current research on random polymers concerns their place in the KPZ universality class, see \cite{Corwinsurvey} for an introduction to this area and \cite{BR} and the references therein for a description of some more recent developments regarding random polymers. It would of course be worthwhile to consider whether our results provide any further insight in this direction.
\end{enumerate}

\section*{Acknowledgements}

This research was supported by JSPS Grant-in-Aid for Scientific Research (B), 19H01792. The research of DC was also supported by JSPS Grant-in-Aid for Scientific Research (C), 19K03540, and the Research Institute for Mathematical Sciences, an International Joint Usage/Research Center located in Kyoto University. The authors thank Kevin Bao and Christian Noack for sharing a preliminary version of their results from \cite{BaoN}, and Ivan Corwin and Mark Rychnovsky for helpful comments. This work was completed while MS was kindly being hosted by the Courant Institute, New York University. MS is further grateful to Ivan Corwin for arranging space to work in Columbia University during this extended visit to New York. Finally, we thank the referees who commented on earlier versions of this work, as their insightful comments helped improve the article and contributed to several of the open problems.

\appendix

\section{Probability distributions}\label{appa}

In the following list, we give definitions of the various probability distributions that appear within this article.

\begin{description}
\item[Shifted exponential distribution] For $\lambda >0$, $c\in \R$,  the \emph{shifted exponential} distribution with parameters $(\lambda,c)$, which we denote $\mathrm{sExp}(\lambda,c)$, has density
\[\frac{1}{Z}e^{-\la x}\mathbf{1}_{[c,\infty)}(x),\qquad x\in\R,\]
where $Z$ is a normalizing constant. We further write $\mathrm{Exp}(\lambda):=\mathrm{sExp}(\lambda,0)$ for the standard \emph{exponential} distribution.
\item[Shifted scaled geometric distribution] For $\theta\in(0,1)$, $M\in\mathbb{Z}$ and $m\in(0,\infty)$, we say a random variable $X$ has \emph{shifted scaled geometric distribution} with parameters $1-\theta$, $M$ and $m$ if
\[\mathbf{P}\left(X=mx\right)=\frac{1}{Z}\theta^{x},\qquad x\in\{M,M+1,\dots\},\]
where $Z$ is a normalising constant; in this case we write $X\sim \mathrm{ssGeo}(1-\theta,M,m)$. We further write $\mathrm{sGeo}(1-\theta,m):=\mathrm{ssGeo}(1-\theta,0,m)$ for the \emph{scaled geometric distribution} with parameters $1-\theta$ and $m$.
\item[Asymmetric Laplace distribution] For $\lambda_1,\lambda_2\in(0,\infty)$, the \emph{asymmetric Laplace} distribution with parameters $(\lambda_1,\lambda_2)$, which we denote $\mathrm{AL}(\lambda_1,\lambda_2)$, has density
\[\frac{1}{Z}\left(e^{-\lambda_1 x}\mathbf{1}_{(0,\infty)}(x)+e^{\lambda_2 x}\mathbf{1}_{(-\infty,0)}(x)\right),\qquad x\in\R,\]
where $Z$ is a normalizing constant.
\item[Scaled discrete asymmetric Laplace distribution] For $\theta_1,\theta_2\in(0,1)$ and $m \in (0,\infty)$, we say a random variable $X$ has \emph{scaled discrete asymmetric Laplace} distribution with parameters $(1-\theta_1,1-\theta_2,m)$ if
\[\mathbf{P}\left(X=mx\right) =\left\{\begin{array}{ll}
                    \frac{1}{Z}\theta_1^{x}, & x \in \{0,1,2,\dots\}, \\
                    \frac{1}{Z}\theta_2^{-x}, & x \in \{ \dots, -2,-1\},
                  \end{array}\right.\]
where $Z$ is a normalizing constant; in this case we write $X \sim \mathrm{sdAL}(1-\theta_1,1-\theta_2,m)$.
\item[Gamma distribution] For $\lambda,c\in(0,\infty)$, the \emph{gamma} distribution with parameters $(\lambda,c)$, which we denote $\mathrm{Gam}(\lambda,c)$, has density
\[\frac{1}{Z}x^{\lambda-1}e^{-cx}\mathbf{1}_{(0,\infty)}(x),\qquad x\in\mathbb{R},\]
where $Z$ is a normalizing constant.
\item[Inverse-gamma distribution] For $\lambda,c\in(0,\infty)$, the \emph{inverse-gamma} distribution with parameters $(\lambda,c)$, which we denote $\mathrm{IG}(\lambda,c)$, has density
\[\frac{1}{Z}x^{-\lambda-1}e^{-cx^{-1}}\mathbf{1}_{(0,\infty)}(x),\qquad x\in\mathbb{R},\]
where $Z$ is a normalizing constant.
\item[Beta distribution] For $\lambda_1,\lambda_2 \in(0,\infty)$, the \emph{beta} distribution with parameters $(\lambda_1,\lambda_2)$, which we denote $\mathrm{Be}(\lambda_1,\lambda_2)$, has density
\[\frac{1}{Z} x^{\lambda_1-1}(1-x)^{\lambda_2-1}\mathbf{1}_{(0,1)}(x),\qquad x\in\mathbb{R},\]
where $Z$ is a normalizing constant.
\item[Inverse-beta distribution] For $\lambda_1,\lambda_2 \in(0,\infty)$, the \emph{inverse-beta} distribution with parameters $(\lambda_1,\lambda_2)$, which we denote $\mathrm{IB}(\lambda_1,\lambda_2)$, has density
\[\frac{1}{Z} (x-1)^{\lambda_2-1}x^{-\lambda_1-\lambda_2}\mathbf{1}_{(1,\infty)}(x),\qquad x\in\mathbb{R},\]
where $Z$ is a normalizing constant.
\item[Beta prime distribution] For $\lambda_1,\lambda_2 \in(0,\infty)$, the \emph{beta prime} distribution with parameters\linebreak
    $(\lambda_1, \lambda_2)$, which we denote $\tBeta(\lambda_1,\lambda_2)$, has density
\[\frac{1}{Z} x^{\lambda_1-1}(1+x)^{-\lambda_1-\lambda_2}\mathbf{1}_{(0,\infty)}(x),\qquad x\in\mathbb{R},\]
where $Z$ is a normalizing constant.
\end{description}

\bibliography{irf}
\bibliographystyle{amsplain}

\end{document}